 \makeatletter \@addtoreset{equation}{section}
\title[Two-dimensional stability analysis in a HIV model]
      {Two-dimensional stability analysis in a HIV model with quadratic logistic growth term}
\author{Claude-Michel Brauner}
\address{School of Mathematical Sciences, Xiamen University, 361005 Xiamen, China
and Institut de Math\'ematiques de Bordeaux, Universit\'e de Bordeaux, 33405 Talence cedex, France}
\email{cmbrauner@gmail.com, claude-michel.brauner@u-bordeaux1.fr}
\author{Xinyue Fan}
\address{College of Science, Guizhou University, 550025 Guiyang, China. Former affiliation: School of Mathematical Sciences, Xiamen University, 361005 Xiamen, China}
\email{fan.xinyue@163.com}
\author{L. Lorenzi}
\address{Dipartimento di Matematica e Informatica, Universit\`a degli Studi di Parma, Parco Area delle Scienze 53/A, I-43124 Parma, Italy.}
\email{luca.lorenzi@unipr.it}
\subjclass[2000]{Primary: 35K55; Secondary: 35B35, 92C50.}
 \keywords{HIV, stability, Hopf bifurcation.}
\thanks{This work was partially supported by a grant of the Fujian Administration of Foreign Expert Affairs, China.}
\newtheorem{theorem}{Theorem}[section]
\newtheorem{remark}[theorem]{Remark}{\rm}
\newtheorem{proposition}[theorem]{Proposition}
\newtheorem{corollary}[theorem]{Corollary}
\newcommand{\e}{\varepsilon}
\newcommand{\Om}{\Omega}
\newcommand\re{{\rm Re}}
\newcommand{\mt}{\mu_T}
\newcommand{\mi}{\mu_I}
\newcommand{\mv}{\mu_V}
\newcommand{\Rmnum}[1]{\expandafter\@slowromancap\romannumeral #1@}
\newcommand\C{{\mathbb C}}
\newcommand\R{{\mathbb R}}
\newcommand\N{{\mathbb N}}
\renewcommand{\phi}{\varphi}
\begin{document}

\begin{abstract}
We consider a Human Immunodeficiency Virus (HIV) model with a logistic growth term and continue
the analysis of the previous article \cite{FB10}. We now take the viral diffusion in a two-dimensional environment.
The model consists of two ODEs for the concentrations of the target T cells, the infected cells, and a parabolic PDE
for the virus particles.
We study the stability of the uninfected and infected equilibria, the occurrence of Hopf bifurcation
and the stability of the periodic solutions.
\end{abstract}

\maketitle

\allowdisplaybreaks

\section{Introduction}\label{intro}
Over the past thirty years, there has been much research in the mathematical modeling of Human
Immunodeficiency Virus (HIV), the virus which causes AIDS (Acquired Immune Deficiency Syndrome).
The research directions have been twofold: (i) the epidemiology of AIDS; (ii) the immunology of HIV
as a pathogen. We are interested in the latter approach.

The major target of HIV infection is a class of lymphocytes, or white blood cells, known as CD4$^+$
T cells. When the CD4$^+$ T-cell count, which is normally around 1000 mm$^{-3}$, reaches 200 mm$^{-3}$
or below in an HIV-infected patient, then that person is classified as having AIDS.

Mathematical models have been proved valuable in understanding the in vivo dynamics of the virus.
A gamut of models have been developed to describe the immune system, its interaction with HIV,
and the decline in CD4$^+$ T cells. They have contributed significantly to the understanding of HIV basic biology.

Recently, the effect of spatial diffusion has been taken in account in HIV modeling.
Funk et al. \cite{funk2005spatial} introduced a discrete model: they adopted a
two-dimensional square grid with $21 \times 21$ sites and assumed
that the virus can move to the eight nearest neighboring sites.
K. Wang et al. \cite{wang2007propagation} generalized Funk's model.
They assumed that the hepatocytes can not move under normal
conditions and neglected their mobility, whereas virions can move
freely and their motion follows a Fickian diffusion.
In \cite{brauner2011heterogeneous}, two of the authors considered a two-dimensional heterogenous environment:
the basic reproductive ratio is generalized
as an eigenvalue of some Sturm-Liouville problem.
Furthermore, in the case of an alternating structure of viral sources,
the classical approach via ODE systems is justified via a homogenized limiting environment.

In this article, we consider  a HIV model which takes the viral diffusion into account in a homogeneous
two-dimensional environment, and includes a quadratic logistic growth term as previously proposed in
\cite{PKDB93,PN99} to consider the homeostatic process for the CD4$^+$ T-cell count. The model reads:
\begin{align}
\frac{\partial T}{\partial t}&=\alpha - \mt T+rT\left(1-\frac{T}{T_{\max}}\right) - \gamma V T,\label{pde1} \\
\frac{\partial I}{\partial t}&= \gamma V T - \mi I,\label{pde2} \\
\frac{\partial V}{\partial t}&=N \mi I- \mv V+ d_V \Delta V.\label{pde3}
\end{align}%
The spatial domain is denoted by $\Omega_{\ell}=(0,\ell)\times(0,\ell)$,
periodic boundary conditions are prescribed for $V$ . Since the system \eqref{pde1}-\eqref{pde3}
defines a dynamical system or semiflow, we will also use
the abstract notation ${\mathbf X}(t)= (T(t),I(t),V(t))$.

Our aim is to  continue the analysis of the previous paper \cite{FB10}, where
we studied the system \eqref{pde1}-\eqref{pde3} when $d_V=0$. We refer to \cite{FB10}
for an extended introduction to the
biological issues. In brief, we recall that $T$ and $I$ denote the respective
concentrations of uninfected and infected CD4$^+$ T cells. The
concentration of free virus particles, or virions, is $V$ (for the
sake of simplicity, we call $V$ the virus).
In \eqref{pde1}, $r$ is the
average specific T-cell growth rate obtained in the absence of
population limitation. The term
$1-T/T_{\max}$ shuts off T-cell growth as the population level
$T_{\max}$ is approached from below. Here $\mt$ is the natural death rate of CD4$^+$
T  cells, the term $\gamma V T$ models the rate at which free virus
infects a CD4$^+$ T cell. The infected cells die at a rate $\mi$
and produce free virus during their life-time at a rate $N$. In
addition, $\mv$ is the death rate of the virus.
According to the literature
(see e.g.,  Table \ref{table1} or \cite{ciupe2006estimating} where $\mt=0.01,~\mi=0.39$),
we assume the following biologically relevant
hypothesis:
\begin{equation}
\label{bio-hypoth}
\mi >\mt.
\end{equation}
Note that the quantity $r-\mt$, the net T-cell proliferation rate, needs not to be positive
(see \cite[p. 86]{PKDB93}).
\begin{table}[!h]
\tabcolsep 0pt \caption{Parameters and Variables} \vspace*{-12pt}
\begin{center}
\def\temptablewidth{1\textwidth}
{\rule{\temptablewidth}{1pt}}
\begin{tabular*}{\temptablewidth}{@{\extracolsep{\fill}}ccccccc}
& Parameters~$\&$~Variables  & Values \\   \hline
 Dependent& &  \\
 variables  & &  \\
 $T$  &Uninfected ~$CD4^+$ ~T-cell ~population~& $mm^{-3}$   \\
 $I$  &Infected ~$CD4^+$ ~T-cell ~density~& $mm^{-3}$ ~~  \\
 $V$  &HIV ~population ~size~& $mm^{-3}$    \\
Parameters~$\&$ & & \\
Constants &  & \\
     $r$  &Proliferation~rate~of~the~$CD4^+$~T-cell~population& 0.2 $day^{-1}$~~ \\
     $N$  &Number~of~virus~produced~by~infected~cells & 1000   \\
$\alpha$  &Production~rate~for~uninfected~$CD4^+$~T~cells &1.5 $day^{-1}mm^{-3}$   \\
$\gamma$  &Infection~rate~of~uninfected~$CD4^+$~T~cells &0.001 $day^{-1}mm^{3}$   \\
$T_{\max}$ &Maximal~population~level~of~$CD4^+$~~T~~cells~at & 1500 $mm^{-3}$   \\
 &which~the~$CD4^+$~T-cell~proliferation ~~shuts~~off&\\
 $\mt$  &Death~rate~of~uninfected~$CD4^+$~T-cell~population & 0.1 $day^{-1}$   \\
 $\mi$  &Death~rate~of~infected~$CD4^+$~T-cell~population & 0.5 $day^{-1}$   \\
 $\mv$  &Clearance~rate~of~free~virus & 10 $day^{-1}$\\
 Derived & &  \\
 variable & &  \\
 $T_0$   &$CD4^+$ ~T-cell ~population ~for ~HIV~negative ~persons~& $mm^{-3}$ \\
 \end{tabular*}\\
 {\rule{\temptablewidth}{1pt}}
       \end{center}
       \label{table1}
       \end{table}

From a mathematical viewpoint:\\
\noindent (i) $N>0$ and $r \ge 0$ are parameters;

\noindent (ii) the quantities (with associated dimension) $\alpha$, $\gamma$, $\mi$, $\mt$
and $\mv$ are fixed positive numbers throughout the paper;

\noindent (iii) $T_{\max}$ is a large perturbation parameter, larger than any finite combination
of  $\alpha$, $\gamma$, $\mi$, $\mt$ and $\mv$ of the same dimension ($mm^{-3}$).
In particular, this hypothesis contains the condition $T_{\max} >\alpha/\mt$ of \cite[p. 85]{PKDB93}.

The paper is organized as follows: in Section \ref{equilibria}, we prove that System \eqref{pde1}-\eqref{pde3}
admits, for any value of the parameters $r$ and $N$, the uninfected steady state ${\bf X}_u = (T_u,0,0)$
and that, in a region of the space of parameters, there exists also another steady-state solution,
the so-called infected steady state ${\bf X}_i = (T_i,I_i,V_i)$, where $T_i$, $I_i$ and $V_i$ are positive.
In the parameter space, we define the regions ${\mathcal U}$ and ${\mathcal I}$ (this latter being the region where the
infected steady-state exists), respectively for uninfected (the reproductive ratio
is such that $R_0 <1$) and infected ($R_0 >1$). We recall that  the basic reproductive ratio
denotes the average number of infected T cells derived from one infected T cell
(\cite{diekmann2000mathematical}). We prove that the uninfected steady state is asymptotically stable in ${\mathcal U}$,
and unstable in ${\mathcal I}$.

In \cite{FB10}, we have exhibited an unbounded subdomain $\mathcal P$ in $\mathcal I$ in which the positive
infected equilibrium becomes unstable whereas it is asymptotically stable in the rest of $\mathcal I$.
In this unstable region, the levels of the various cell types and virus particles oscillate, rather than
converging to steady values. This subdomain $\mathcal P$ may be biologically interpreted as a perturbation
of the infection by a specific or unspecific immune response against HIV. In Section
\ref{stability infected}, we consider the linearization around ${\mathbf X}_i=(T_i,I_i,V_i)$
of System \eqref{pde1}-\eqref{pde3} with Jacobian matrix ${\mathscr L}_i$. A modal expansion of the resolvent
equation enables us to construct a {\it finite number} of subdomains ${\mathcal P}_k$ ($k=0,\ldots,K_2$) in $\mathcal I$,
that form a monotone non-increasing sequence (for the inclusion) with ${\mathcal P}_0 = {\mathcal P}$.
It turns out that the infected equilibrium ${\mathbf X}_i= (T_i,I_i,V_i)$ is asymptotically stable for
$(N,r) \in {\mathcal I}\setminus {\mathcal P}$ and unstable in the interior of ${\mathcal P}$.
Therefore the stability issue is governed by the $0$-th mode, hence similar to the case without
viral diffusion ($d_V=0$). As a matter of fact, we are unable to confirm Funk et al. \cite{funk2005spatial},
who suggested that the presence of a spatial structure enhances population stability with respect
to non-spatial models (see also \cite{brauner2011heterogeneous}).

In Section \ref{hopf-instability}, we take the logistic parameter $r$ as bifurcation parameter
and prove the existence of Hopf bifurcations at the boundary $\partial {\mathcal P}$. Since the
system is only partially dissipative, the resolvent operator associated to the realization $L_i$
of ${\mathscr L}_i$ is not compact and therefore the proof demands more attention: it relies on
the analyticity of the semigroup $\exp(t{L}_i)$ (see e.g., \cite{hassard1981theory}, \cite{marsden1976hopf}).
Next, we perform a nonlinear analysis at the Hopf points via the Center Manifold theorem.
It turns out that
the bifurcating periodic solutions are independent of the space variables.

Numerical illustrations are presented in Section \ref{numerics}. Finally, for the sake of completeness,
we recall in an Appendix some basic facts about the eigenvalues of the two-dimensional Laplace operator
with periodic boundary conditions and some Sturm-Liouville operators.
\medskip

\paragraph{\bf Notation}
For any $\ell>0$ we denote by $L^2$ the
usual space of square-integrable functions $f:(0,\ell)^2\to\mathbb R$.
The square $(0,\ell)^2$ will be simply denoted by
$\Omega_{\ell}$.
By $H^k_{\sharp}$ ($k=1,2,\ldots$) we denote the closure in $H^k$
(the subset of $L^2$ of all the functions whose
distributional derivatives up to $k$-th order are in $L^2$)
of the space
$C^k_{\sharp}$ of all $k$-th continuously differentiable functions
$f:\R^2\to\R$ which are periodic with period $\ell$ in each
variable. The space $H^k_{\sharp}$ is endowed with
its Euclidean norm.
If $X$ is any of the previous spaces, we write $X_{\mathbb C}$ to denote the space of
complex-valued functions $f$ such that ${\rm Re}\,f$ and ${\rm Im}\,f$ are in $X$.
The norm in $X_{\mathbb C}$ is defined in the natural way:
$\|f\|_{X_{\mathbb C}}^2=\|{\rm Re}\,f\|_{X}^2+\|{\rm Im}\,f\|_{X}^2$.
If ${\bf v}$ is a vector of $\C^3$, we denote  by $v_1$, $v_2$ and $v_3$ its components. Similarly,
if ${\bf f}$ is a function defined in $\Omega_{\ell}$ with values in $\R^3$ (resp. $\C^3$),
we denote by $f_1$, $f_2$ and $f_3$ its components. If the components
of the vector ${\bf v}$ are complex numbers, we denote by $\overline{\bf v}$ the vector whose
components are the conjugates of the components of ${\bf v}$.
The Euclidean inner product in $L^2_{\mathcal C}\times L^2_{\mathcal C}\times L^2_{\mathcal C}$ is denoted
by $(\cdot,\cdot)_2$, i.e.,
\begin{eqnarray*}
({\bf f},{\bf g})_2=\int_{\Omega_{\ell}}(f_1\overline{g_1}+f_2\overline{g_2}+f_3\overline{g_3})dxdy,
\end{eqnarray*}
for any ${\bf f}, {\bf g}\in L^2_{\mathcal C}\times L^2_{\mathcal C}\times L^2_{\mathcal C}$.
Finally, we denote by $Id$ the identity operator, and by $(\cdot)^+$ the
positive part of the number in brackets.

\section{Equilibria}\label{equilibria}
\setcounter{equation}{0}
In this section we are devoted to determine the non-negative equilibria of System
\eqref{pde1}-\eqref{pde3}, i.e., the solutions $(T,I,V)\in
L^2\times L^2\times H^2_{\sharp}$ to the system
\begin{align}
&\alpha - \mt T+rT\left(1-\frac{T}{T_{\max}}\right) - \gamma V T=0,\label{pde1-1} \\
&\gamma V T - \mi I=0,
\label{pde2-1} \\
&N \mi I- \mv V+ d_V \Delta V=0.
\label{pde3-1}
\end{align}%

To state the first main result of this section, let us introduce some functions and a few notation.

By ${\bf X}_u$ and ${\bf X}_i$ we denote, respectively, the function whose entries $T_u$, $I_u$ and $V_u$ are given by
\begin{eqnarray*}
T_{u}=T_0,
\qquad I_{u} = 0, \qquad\;\, V_{u} = 0,
\end{eqnarray*}
where
\begin{eqnarray*}
T_0=T_0(r)=\frac{r-\mt+\sqrt{(r-\mt)^2+\frac{4\alpha r}{T_{\max}}}}{2r}T_{\max},
\end{eqnarray*}
and the function whose entries are given by $T_i$, $I_i$ and $V_i$, where
\begin{align*}
T_{i} =& \frac{\mv}{\gamma N},\\
I_{i}=&\frac{\alpha}{\mi}-\frac{\mt\mv}{\gamma
\mi N}+\frac{\mv r}{\gamma \mi N}\left (1-\frac{\mv}{\gamma NT_{\max}}\right ),\\
V_{i} =&\frac{\alpha N}{\mv}
-\frac{\mt}{\gamma}+\frac{r}{\gamma}\left
(1-\frac{\mv}{\gamma NT_{\max}}\right ).
\end{align*}

We further introduce two sets which will play a fundamental role in all our analysis, namely the
uninfected and infected regions ${\mathcal U}$ and ${\mathcal I}$
in the parameter space, which are defined by
\begin{eqnarray*}
{\mathcal U} =\{(N,r): N>0,\; r\geq 0,\; R_0(N,r)<1\},
 \qquad{\mathcal I} = \{(N,r): N>0,\; r\geq 0,\; R_0(N,r)>1\},
\end{eqnarray*}
where
\begin{eqnarray*}
R_0(N,r) = \frac{\gamma N T_0(r)}{\mv}
\end{eqnarray*}
is the reproduction ratio.

The interface $R_0(N,r)=1$ between the two regions ${\mathcal U}$ and ${\mathcal I}$ is the graph
of the mapping
\begin{equation}
N_{\rm crit}(r)=\frac{\mv
\left(\mt-r+\sqrt{(\mt-r)^2+\frac{4\alpha
r}{T_{\max}}}\right)}{2\alpha \gamma}=\frac{\mv}{\gamma T_0(r)},
\label{N-crit}
\end{equation}
which is decreasing by virtue of the condition $T_{\max}>\alpha\mt^{-1}$. Its image is the interval
$\left (\frac{\mv}{\gamma T_{\max}},\frac{\mt\mv}{\alpha\gamma}\right ]$.
Inverting the roles of $N$ and $r$, it is useful to define the inverse mapping:
\begin{eqnarray*}
r_{\rm crit}(N) = \frac{ (\mt\mv-\alpha\gamma  N)^{+}} {{\mv}
\left( 1-{\frac {\mv}{\gamma NT_{\max}}} \right)},\qquad  N>N_{\rm crit}(+\infty)=\frac{\mv}{\gamma T_{\max}}.
 \end{eqnarray*}

As it has been stressed in the introduction, throughout the paper we assume that
\begin{equation}
\label{bio-hypoth-1}
\mi >\mt.
\end{equation}

To prove the following theorem we assume also that
\begin{eqnarray}\label{hyp-Tmax}
T_{\max} \geq T_{\max}^0(\alpha,\gamma,\mi,\mt,\mv),
\end{eqnarray}
where $T_{\max}^0$ is fixed and large, and depends only on the quantities in brackets.


\begin{theorem}
\label{thm-2.1}
The following properties are satisfied:
\begin{enumerate}[\rm (i)]
\item
in $\mathcal U$ the uninfected steady state ${\mathbf X}_u=(T_u,I_u,V_u)$
is the only non-negative equilibrium;
\item
in $\mathcal I$ there
exist two non-negative equilibria, respectively ${\mathbf X}_u$ and the infected steady state
${\mathbf X}_i=(T_i,I_i,V_i)$;
\item
it holds $0<T_u<T_{\max}$ and, in ${\mathcal I}$, $0<T_i<T_{\max}$;
\item
${\bf X}_u$ and ${\bf X}_i$ are all the possible equilibria to System \eqref{pde1}-\eqref{pde3}, with all the components being
non-negative.
\end{enumerate}
\end{theorem}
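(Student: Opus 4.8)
The plan is to reduce the stationary system \eqref{pde1-1}--\eqref{pde3-1} to a single semilinear equation for $V$ and then to prove that any non-negative solution $(T,I,V)\in L^2\times L^2\times H^2_{\sharp}$ must have $V$ constant; once this is known, \eqref{pde1-1}--\eqref{pde3-1} becomes the purely algebraic system already treated in \cite{FB10}, and (i)--(iv) are read off from it. First I would observe that, since $\alpha>0$, equation \eqref{pde1-1} rules out $T=0$, so $T>0$ a.e.; writing \eqref{pde1-1} as $\gamma VT=F(T)$ with $F(T):=\alpha+(r-\mt)T-rT^2/T_{\max}$ and dividing by $T$ gives $\gamma V=g(T)$, where $g(T):=\alpha T^{-1}+r-\mt-rT/T_{\max}$ is smooth, strictly decreasing on $(0,\infty)$, and satisfies $g(T_0)=0$ by the very definition of $T_0$. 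Hence $T=\phi(V)$ for a smooth strictly decreasing $\phi$ with $\phi(0)=T_0$. Adding \eqref{pde1-1} and \eqref{pde2-1} yields $\mi I=F(T)=\gamma VT$, so $I=\gamma V\phi(V)\mi^{-1}$, and substituting into \eqref{pde3-1} leaves
\begin{equation*}
d_V\Delta V=VG(V),\qquad G(V):=\mv-N\gamma\phi(V),
\end{equation*}
with $G$ smooth, strictly increasing, and $G(0)=\mv\bigl(1-R_0(N,r)\bigr)$. (Since $|VG(V)|\le C|V|$, elliptic regularity and a two-dimensional Sobolev embedding even give a classical periodic $V$, but this will not be needed.)

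The main obstacle, and the step I would spend most care on, is showing that $V$ is constant; this I would do by an energy identity with a well-chosen test function. In $\mathcal U$ one has $R_0<1$, hence $G>0$ on $[0,\infty)$; in $\mathcal I$ one has $R_0>1$, hence $G$ has a unique zero $V_i>0$, which is exactly the quantity in the statement, since $G(V_i)=0$ is equivalent to $\phi(V_i)=\mv(\gamma N)^{-1}=T_i$, i.e.\ to $V_i=g(T_i)/\gamma$. Set $c:=0$ in the uninfected case and $c:=V_i$ in the infected case; then $V\,G(V)\,(V-c)\ge 0$ pointwise for every $V\ge 0$ (when $c=0$ because $G\ge 0$, when $c=V_i$ because $G$ is increasing and vanishes at $c$). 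Multiplying $d_V\Delta V=VG(V)$ by $V-c$, integrating over $\Omega_{\ell}$, and using that the periodic boundary conditions annihilate the boundary term, I get
\begin{equation*}
-d_V\int_{\Omega_{\ell}}|\nabla V|^2\,dxdy=\int_{\Omega_{\ell}}V\,G(V)\,(V-c)\,dxdy\ge 0 ,
\end{equation*}
so $\nabla V\equiv 0$ and $V$ is a non-negative constant. Then $\Delta V=0$ forces $VG(V)=0$, so $V=0$ in $\mathcal U$ and $V\in\{0,V_i\}$ in $\mathcal I$.

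It then remains to read off the conclusions. Substituting $V=0$ gives $T=\phi(0)=T_0$ and $I=0$, i.e.\ $\mathbf X_u$; substituting $V=V_i$ gives $T=T_i=\mv(\gamma N)^{-1}$ and $I=F(T_i)/\mi$, and a direct computation with $T_i=\mv(\gamma N)^{-1}$ shows these equal the $T_i,I_i,V_i$ of the statement. This gives (iv), hence also (i) and (ii), once one notes that $I_i>0$ and $V_i>0$ are each equivalent to $T_i<T_0$, i.e.\ to $R_0(N,r)>1$, i.e.\ to $(N,r)\in\mathcal I$. For (iii): $T_0>0$ is immediate, while $T_0<T_{\max}$ is equivalent, after squaring, to $\alpha/T_{\max}<\mt$, which holds because $T_{\max}>\alpha\mt^{-1}$ (a consequence of \eqref{hyp-Tmax}); and in $\mathcal I$ one has $0<T_i=\mv(\gamma N)^{-1}<T_{\max}$ because there $N>N_{\rm crit}(+\infty)=\mv(\gamma T_{\max})^{-1}$. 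The only genuinely non-routine ingredient is the energy identity above, specifically the choice of the test function $V-V_i$ and the observation that $G(V)(V-V_i)$ has a fixed sign because $G$ is monotone and vanishes at $V_i$; the rest is bookkeeping with the algebraic relations among the parameters.
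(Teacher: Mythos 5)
Your proposal is correct, and for parts (i)--(iii) it follows essentially the same path as the paper: both reduce the stationary system to a single scalar equation for $V$ (your $d_V\Delta V=VG(V)$ with $G(V)=\mv-N\gamma\phi(V)$ is exactly the paper's \eqref{self-cont} rewritten, since $\Phi(V)=\gamma NV\phi(V)$), identify the constant solutions $0$ and $V_i$, and check the sign conditions defining $\mathcal U$ and $\mathcal I$. Where you genuinely diverge is the uniqueness step (iv). The paper adapts Keller's method: assuming a non-constant non-negative solution $V\not\equiv 0$, $V\neq V_i$, it writes a linear equation for $W=V-V_i$, compares the principal eigenvalues of the Schr\"odinger-type operators $d_V\Delta+\Lambda(V,V_i)Id$ and $d_V\Delta+\Lambda(V,0)Id$ via the variational characterization of Corollary \ref{cor-app}, and reaches a contradiction through the Fredholm alternative and the sign of the principal eigenfunction. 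You instead test $d_V\Delta V=VG(V)$ against $V-c$ (with $c=0$ in $\mathcal U$, $c=V_i$ in $\mathcal I$) and use the strict monotonicity of $G$ --- equivalently, the monotone decrease of the positive root $\phi(V)$ of the quadratic in $T$ --- to get the pointwise sign $V\,G(V)\,(V-c)\ge 0$, which kills $\int|\nabla V|^2$ outright. Your route is more elementary and self-contained: it avoids the spectral theory of the appendix entirely and proves directly that every non-negative $H^2_{\sharp}$ equilibrium has constant $V$, rather than arguing by contradiction. The price is that it leans on a structural feature (global monotonicity of $G$) that the Keller argument does not require in the same form; both uses are legitimate here because $x\mapsto x+\sqrt{x^2+4\alpha r/T_{\max}}$ is increasing, which is also the inequality the paper exploits when bounding $\Lambda(V,V_i)-\Lambda(V,0)$. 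All remaining verifications (positivity of $V_i$ iff $R_0>1$, $T_u<T_{\max}$ from $T_{\max}>\alpha\mt^{-1}$, $T_i<T_{\max}$ from $N>\mv(\gamma T_{\max})^{-1}$) match the paper's.
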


\begin{proof}
(i) Suppose that ${\bf X}=(T,I,V)$ is a solution to System \eqref{pde1-1}-\eqref{pde3-1}. Then, from \eqref{pde1-1} we deduce that
\begin{equation}
T=\frac{r-\mt-\gamma V+\sqrt{(r-\mt-\gamma V)^2+\frac{4\alpha r}{T_{\max}}}}{2r}T_{\max}.
\label{T-V}
\end{equation}
Replacing the expression of $I$ given by \eqref{pde2-1} in \eqref{pde3-1} and, then, using \eqref{T-V},
we obtain the following self-contained nonlinear equation for $V$:
\begin{equation}
d_V\Delta V-\mv V+\Phi(V)=0,
\label{self-cont}
\end{equation}
where
\begin{eqnarray*}
\Phi(V)=\frac{\gamma NV}{2r}\left (r-\mt-\gamma V+\sqrt{(r-\mt-\gamma V)^2+\frac{4\alpha r}{T_{\max}}}\right )T_{\max}.
\end{eqnarray*}
As it is easily seen any solution to \eqref{self-cont} in $H^2_{\sharp}$ leads to a solution to System \eqref{pde1}-\eqref{pde3}.
Moreover, from any non-negative solution to Equation \eqref{self-cont} we can obtain an
equilibrium to System \eqref{pde1}-\eqref{pde3} will all the components non-negative in $\Omega_\ell$.
Hence, we can limit ourselves to looking for non-negative solutions $V\in H^2_{\sharp}$ to Equation \eqref{self-cont}.

Clearly, \eqref{self-cont} admits the trivial function $V\equiv 0$ as a solution. This solution leads to the equilibrium
${\bf X}_u$.

(ii) Let us look for other positive constant solutions to Equation \eqref{self-cont}.
We are thus lead to look for solutions to the equation
$\Phi(V)-\mv V=0$ which are non-negative.

A straightforward computation reveals that $V_i$ is
the unique solution to such an equation. Moreover, for any fixed $r>0$, $V_i$ is positive if and only if
$N>N_{\rm crit}(r)$ (see \eqref{N-crit}) i.e., if and only if $(r,N)\in {\mathcal I}$.
In this case, replacing $V=V_i$ into \eqref{T-V}
and \eqref{pde2-1}, we immediately conclude that the function ${\bf X}_i$ is an equilibrium of System
\eqref{pde1}-\eqref{pde3}.

(iii) Showing that $T_u<T_{\max}$ is just an exercise. On the hand, the inequality $T_i<T_{\max}$ in ${\mathcal I}$
follows from the definition of $T_i$ observing that, if $(r,N)\in {\mathcal I}$, then
\begin{eqnarray*}
N>N_{\rm crit}=\frac{\mv}{\gamma T_u}>\frac{\mv}{\gamma T_{\max}}.
\end{eqnarray*}

(iv) To prove that ${\bf X}_u$ and ${\bf X}_i$ are the only equilibria of System \eqref{pde1}-\eqref{pde3} with all the components
being non-negative,
we adapt to our situation a method due to H.B. Keller \cite{keller}.
We argue  by contradiction. We suppose that ${\bf X}=(T,I,V)$ is a solution to System \eqref{pde1-1}-\eqref{pde3-1}
with $V$ non-negative in $\Omega_{\ell}$ and not identically vanishing, and such that $V\neq V_i$.
Let us set $W:=V-V_i$. Since both $V$ and $V_i$ are solutions to \eqref{self-cont}, clearly $W\in H^2_{\sharp}$
and solves the equation
\begin{equation}
d_V \Delta W - \mv W+\Lambda(V,V_i)W=0,
\label{eq-w}
\end{equation}
where
\begin{align*}
\Lambda(x,y)
=&
\frac{\gamma N}{2r}\left (r-\mt-\gamma x+\sqrt{(r-\mt-\gamma x)^2+\frac{4\alpha r}{T_{\max}}}\right )T_{\max}
-\frac{\gamma^2NT_{\max}}{2r}y\\
&+\frac{\gamma^2NT_{\max}}{2r}y\int_0^1\frac{\gamma(tx+(1-t)y)-r+\mt}{\sqrt{[r-\mt-\gamma (tx+(1-t)y)]^2+\frac{4\alpha r}{T_{\max}}}}dt,
\end{align*}
for any $x,y\ge 0$.

Let $\lambda_{\max}(V,V_i)$ and $\lambda_{\max}(V,0)$ denote
the maximum eigenvalues in $L^2$ of the operators
$d_V\Delta+\Lambda(V,V_i)Id$ and $d_V \Delta
+\Lambda(V,0)Id$, respectively.
By Corollary \ref{cor-app}, we know that
\begin{eqnarray*}
\lambda_{\max}(V,0)=-\inf_{\psi \in H^1_{\sharp}, \psi \not\equiv
0} \left\{\frac{d_V\int_{\Omega_{\ell}}|\nabla\psi|^2 dxdy -
\int_{\Omega_{\ell}}\Lambda(V,0)\psi^2 dxdy}
{\int_{\Omega_{\ell}}\psi^2 dxdy} \right\},
\end{eqnarray*}
and
\begin{eqnarray*}
\lambda_{\max}(V,V_i)=-\inf_{\psi \in H^1_{\sharp}, \psi \not\equiv
0} \left\{\frac{d_V\int_{\Omega_{\ell}}|\nabla\psi|^2 dxdy -
\int_{\Omega_{\ell}}\Lambda(V,V_i)\psi^2 dxdy}
{\int_{\Omega_{\ell}}\psi^2dxdy} \right\}.
\end{eqnarray*}
We now observe that
\begin{align*}
\Lambda(V,V_i)-\Lambda(V,0)=&
\frac{\gamma^2NT_{\max}}{2r}V_i\left (-1+
\int_0^1\frac{\gamma(tV+(1-t)V_i)-r+\mt}{\sqrt{[r-\mt-\gamma (tV+(1-t)V_i)]^2+\frac{4\alpha r}{T_{\max}}}}dt\right )\\
\le &
\frac{\gamma^2NT_{\max}}{2r}V_i\left (-1+
\int_0^1\frac{|\gamma(tV+(1-t)V_i)-r+\mt|}{\sqrt{[r-\mt-\gamma (tV+(1-t)V_i)]^2+\frac{4\alpha r}{T_{\max}}}}dt\right )\\
\le &-\frac{\gamma^2NT_{\max}}{2r}V_i\left (1
-\frac{\gamma(\|V\|_{\infty}+V_i)+r+\mt}{\sqrt{[r+\mt+\gamma (\|V\|_{\infty}+V_i)]^2+\frac{4\alpha r}{T_{\max}}}}\right )
 =:-C,
\end{align*}
since the function $x\mapsto x(x^2+4\alpha r/T_{\max})^{-1/2}$ is increasing in $[0,+\infty)$. Here, we have taken advantage of the Sobolev embedding theorem to infer that $V\in H^2_{\sharp}$ is continuous in $\overline{\Omega_{\ell}}$.

Note that the constant $C$ is positive. From this remark we can easily infer that
\begin{eqnarray*}
\lambda_{\max}(V,V_i)\le\lambda_{\max}(V,0)-C.
\end{eqnarray*}

Since $W$ satisfies \eqref{eq-w} and it does not identically vanish in
$\overline{\Omega_{\ell}}$, $\mv\le\lambda_{\max}(V,V_i)<\lambda_{\max}(V,0)$.

To get to a contradiction, we now rewrite the equation satisfied by $V$ in the following way:
\begin{eqnarray*}
d_V\Delta V+\Lambda(V,0)V-\lambda_{\max}(V,0)V=(\mv-\lambda_{\max}(V,0))V:=Z.
\end{eqnarray*}
Fredholm alternative implies that $Z$ should be orthogonal to the function
$\psi$ which spans the
eigenspace associated with the eigenvalue $\lambda_{\min}(V,0)$ of the operator
$d_V\Delta+\Lambda(V,0)Id$. But this can not be the case. Indeed,
by Corollary \ref{cor-app} the function $\psi$ does not change sign in $\overline{\Omega_{\ell}}$.
Moreover, since $V$ is non-negative in $\Omega_{\ell}$ and it does not identically vanish in $\Omega_{\ell}$
and, in addition, $\mv<\lambda_{\max}(V,0)$,
$Z$ is non-positive
and it does not identically vanish in $\Omega_{\ell}$. Hence, $\psi$ is not orthogonal to $Z$.
\end{proof}
\begin{figure}[htp]
\begin{center}
\begin{overpic}[width=9cm,height=6.5cm]{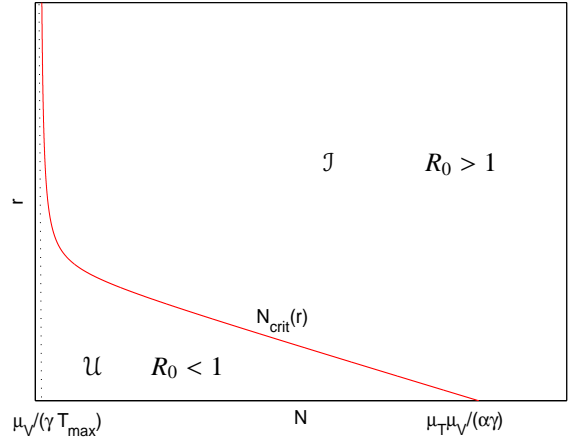}
\put(20,12){$\mathcal{U}$}
\put(55,42){$\mathcal{I}$}
\put(70,42){$R_0>1$}
\put(30,12){$R_0<1$}
\end{overpic}
\end{center}
\vskip -.4truecm
\caption{Profile of the curve $r\mapsto N_{crit}(r)$ (i.e. $R_0=1$) which defines
the two domains $\mathcal U$ and $\mathcal I$. With the values of
Table \ref{table1}, $N_{crit}$ decreases from $\mt\mv/\alpha\gamma
=666.67$ to $\mv/\gamma T_{\max}= 6.67$.}
 \label{curve Ncrit}
\end{figure}

\section{Stability of the equilibria}
\label{stability infected}

In this section we are going to study the stability of the equilibria ${\bf X}_u$ and ${\bf X}_i$.
We begin by studying the stability of the uninfected equilibrium ${\bf X}_u$.

\begin{theorem}
\label{stab-uninf}
The following properties are satisfied:
\begin{enumerate}[\rm (i)]
\item
in the domain ${\mathcal U}$ the
uninfected equilibrium ${\mathbf X}_u$ is asymptotically stable;
\item
in the domain ${\mathcal I}$ the uninfected equilibrium ${\mathbf X}_u$ is unstable.
\end{enumerate}
\end{theorem}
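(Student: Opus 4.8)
The plan is to linearize System \eqref{pde1}-\eqref{pde3} at $\mathbf X_u$, to diagonalize the linearized operator along the Fourier modes of the periodic Laplacian — thus reducing the spectral analysis to a family of $3\times 3$ matrices — and then to deduce stability/instability from the fact that the realization $L_u$ of the Jacobian generates an analytic semigroup, so that its growth bound coincides with its spectral bound. Writing $T=T_0+u$, $I=i$, $V=v$ and discarding the quadratic terms, the linearization is governed by
\begin{eqnarray*}
\mathscr L_u=\begin{pmatrix} a & 0 & -\gamma T_0\\[1mm] 0 & -\mi & \gamma T_0\\[1mm] 0 & N\mi & -\mv+d_V\Delta\end{pmatrix},\qquad a:=-\mt+r-\frac{2rT_0}{T_{\max}},
\end{eqnarray*}
and the equilibrium identity $\alpha-\mt T_0+rT_0(1-T_0/T_{\max})=0$ defining $T_0$ gives at once $a=-\sqrt{(r-\mt)^2+4\alpha r/T_{\max}}<0$. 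Let $0=\mu_0<\mu_1\le\mu_2\le\cdots$ be the eigenvalues of $-\Delta$ on $\Omega_\ell$ with periodic boundary conditions (recalled in the Appendix). Decomposing $L^2\times L^2\times L^2$ into the associated eigenspaces, $L_u$ becomes block-diagonal, the $k$-th block being the matrix $M_k$ obtained from $\mathscr L_u$ by replacing $\Delta$ with $-\mu_k$. Since the first column of $M_k$ is $(a,0,0)^{\top}$, the spectrum of $M_k$ consists of $a$ together with the two eigenvalues of
\begin{eqnarray*}
B_k=\begin{pmatrix} -\mi & \gamma T_0\\[1mm] N\mi & -\mv-d_V\mu_k\end{pmatrix},
\end{eqnarray*}
whose trace $-(\mi+\mv+d_V\mu_k)$ is negative and whose determinant equals $\mi(\mv+d_V\mu_k-\gamma NT_0)$; for $k=0$ the latter is $\mi\mv(1-R_0(N,r))$.

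In $\mathcal U$ we have $R_0<1$, hence $\det B_0>0$ and, a fortiori, $\det B_k=\mi(\mv(1-R_0)+d_V\mu_k)>0$ for every $k$; combined with the negative trace this puts every eigenvalue of every $M_k$ in the open left half-plane. This is not yet enough, because the system is only partially dissipative, $L_u$ has no compact resolvent, and one cannot conclude stability from the location of the eigenvalues alone. However, $L_u$ is a bounded perturbation of the operator acting as $d_V\Delta$ on the third component and as $0$ on the first two, so it generates an analytic semigroup, and for such semigroups $\omega_0(L_u)=\sup\{\mathrm{Re}\,\lambda:\lambda\in\sigma(L_u)\}$. It therefore suffices to show that $\sup_{k\ge 0}\max\mathrm{Re}\,\sigma(B_k)<0$. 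Each $\mathrm{Re}\,\sigma(B_k)$ is negative, and as $\mu_k\to+\infty$ the larger eigenvalue of $B_k$ tends to $-\mi$ while the smaller tends to $-\infty$; hence the supremum is attained over a finite set of indices and is strictly negative. Thus $\omega_0(L_u)<0$ and, by the principle of linearized stability for analytic semigroups, $\mathbf X_u$ is asymptotically stable in $\mathcal U$.

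In $\mathcal I$ we have $R_0>1$, so $\det B_0=\mi\mv(1-R_0)<0$; a real $2\times2$ matrix with negative determinant has one strictly negative and one strictly positive real eigenvalue, and the positive one belongs to $\sigma(L_u)$. Hence the spectral bound of $L_u$ is positive, and the standard instability result for sectorial operators (an eigenvalue of the linearization in the right half-plane yields nonlinear instability) shows that $\mathbf X_u$ is unstable in $\mathcal I$. The main obstacle throughout is precisely the absence of compactness: one must genuinely use the analyticity of $\exp(tL_u)$ and establish the uniform spectral gap through the asymptotics of the eigenvalues of $B_k$ as $k\to\infty$, rather than relying on a finite eigenvalue count as in the purely ODE setting of \cite{FB10}.
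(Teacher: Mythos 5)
Your proposal is correct and follows essentially the same route as the paper: linearize at ${\bf X}_u$, exploit the Fourier structure of the periodic Laplacian to reduce the spectral analysis to finite-dimensional pieces, observe that the sign of the critical quantity $\mv-\gamma N T_0=\mv(1-R_0)$ decides the location of the relevant eigenvalues, and conclude via the linearized stability principle for the analytic semigroup generated by $\boldsymbol{L}_u$. The only (equivalent) difference in packaging is that the paper eliminates $I$ and $T$ to obtain the scalar equation $d_V\Delta V-c(\lambda)V=\cdots$ and uses the monotonicity of $c(\lambda)=\lambda+\mv-\gamma\mi NT_u/(\lambda+\mi)$, whereas you block-diagonalize into the matrices $M_k$ and read off trace and determinant of the $2\times 2$ blocks $B_k$ (note $\det(\lambda I-B_k)=0$ is exactly $c(\lambda)=-d_V\mu_k$), and you are in fact slightly more explicit than the paper about the uniform spectral gap needed to apply the stability principle.
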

\begin{proof}
To avoid cumbersome notation, throughout the proof
we do not stress explicitly the dependence of the functions and operators on $r$ and $N$.

We prove the statement showing that the linearized stability principle
(see e.g., \cite[Chpt. 5, Cor. 5.1.6]{henry1981geometric}) applies to our situation.
For this purpose, we begin by observing that, for any $(N,r)$ the linearization around
${\mathbf X}_u$ of Problem \eqref{pde1}-\eqref{pde3} is
associated with the linear operator $\boldsymbol{{\mathcal L}}_u$ defined by
\begin{eqnarray*}
\boldsymbol{{\mathcal L}}_u=
   \begin{pmatrix}
    \left (r-\mt-\frac{2rT_u}{T_{\max}}\right )Id& 0 & -\gamma T_u Id \\
     0 & -\mi Id& \gamma T_u Id\\[2mm]
     0 &  \mi N Id & d_V\Delta-\mv Id \\
  \end{pmatrix}.
\end{eqnarray*}
Its realization $\boldsymbol{L}_u$ in $(L^2_{\mathbb C})^3$ with domain $D(\boldsymbol{L}_u)=L^2_{\mathbb C}\times
L^2_{\mathbb C}\times H^2_{\sharp,\mathbb C}$ generates an analytic strongly continuous
semigroup. Indeed, $\boldsymbol{L}_u$ is a bounded perturbation of the diagonal
operator
\begin{eqnarray*}
\boldsymbol{{\mathcal A}}=
\begin{pmatrix}
\left (r-\mt-\frac{2rT_u}{T_{\max}}\right )Id & 0 & 0 \\[2mm]
0 &- \mi Id & 0 \\[2mm]
0 & 0 & d_V\Delta-\mv Id
\end{pmatrix},
\end{eqnarray*}
defined in $L^2_{\mathbb C}\times L^2_{\mathbb C}\times H^2_{\sharp,\mathbb C}$, which is clearly
sectorial since all its entries are. Hence, we can apply \cite[Prop.
2.4.1(i)]{lunardi1995analytic} and conclude that $\boldsymbol{L}_u$ is sectorial. Since
$H^2_{\sharp,\mathbb C}$ is dense in $L^2_{\mathbb C}$, the associated analytic semigroup
is strongly continuous.

To complete the proof, we need to study the spectrum of the operator $\boldsymbol{L}_u$. We fix
$\lambda\in\C$ and consider the resolvent equation $\lambda {\bf X}-\boldsymbol{L}_u{\bf X}={\bf F}$,
where ${\bf X}=(T,I,V)\in L^2_{\mathbb C}\times L^2_{\mathbb C}\times H^2_{\sharp,\mathbb C}$ and
${\bf F}=(F_1,F_2,F_3)$ is a given function in $L^2_{\mathbb C}\times L^2_{\mathbb C}\times L^2_{\mathbb C}$.
Writing the previous equation componentwise gives
\begin{equation}
\left\{
\begin{array}{l}
\left (r-\mt-\frac{2r T_{u}}{T_{\max}}\right )T-\gamma T_uV=\lambda T-F_1,\\[3mm]
-\mi I+\gamma T_uV=\lambda I-F_2,\\[3mm]
\mi N I+d_V\Delta V-\mv V=\lambda V-F_3.
\end{array}
\right.
\label{sist-equ}
\end{equation}
If $\lambda\neq -\mi$ we can use the second equation to write $I$ in terms of $V$. Substituting it in
the last equation we
get the following self-contained equation for $V$:
\begin{equation}
d_V\Delta V-\left (\lambda+\mv-\frac{\gamma\mi N T_u}{\lambda+\mi}\right )V=-F_3-\frac{\mi N}{\lambda+\mi}F_2.
\label{eq-autov}
\end{equation}

We recall that the spectrum of the realization $A$ of the Laplace operator in $L^2_{\mathbb C}$,
with $H^2_{\sharp,\mathbb C}$ as a domain consists of eigenvalues only and it is given by
$$
\sigma(A)=\left\{-\frac{4 \pi^2}{\ell^2}(k_1^2+k_2^2):k_1,k_2\in \mathbb{N}\right\},
$$
(see Appendix \ref{Laplace eigenvalues}). Hence, if $c(\lambda):=\lambda+\mv-\frac{\gamma\mi N T_u}{\lambda+\mi}$
does not belong to $\sigma(A)$, then Equation
\eqref{eq-autov} admits a unique solution $V\in H^2_{\sharp,\mathbb C}$.
A straightforward computation shows that $c(\lambda)$ is real if and only if $\lambda$ is real.
Moreover, the function $\lambda\mapsto c(\lambda)$ is strictly increasing in $[0,+\infty)$ and
$c(0)=\mv-\gamma N T_u$ is positive if $(N,r)\in {\mathcal U}$. Hence, if $(N,r)\in {\mathcal U}$, then
$c(\lambda)\notin\sigma(A)$ for any $\lambda$ with non-negative real part, and
Equation \eqref{eq-autov} is uniquely solvable.

We can now uniquely determine $I\in L^2_{\mathbb C}$ from the second equation in \eqref{sist-equ}.
Finally, from the first equation in \eqref{sist-equ}, observing that
\begin{eqnarray*}
r-\mt-\frac{2rT_u}{T_{\max}}=-\sqrt{(r-\mt)^2+\frac{4\alpha r}{T_{\max}}}<0,
\end{eqnarray*}
we can uniquely determine $T\in L^2_{\mathbb C}$. We have so proved that any $\lambda\in\mathbb C$ with non-negative real part is
in the resolvent set of the operator $\boldsymbol{L}_u$, if $(N,r)\in {\mathcal U}$.
In view of the linearized stability principle this implies that the trivial uninfected
solution to System \eqref{pde1}-\eqref{pde3} is asymptotically stable.

Let us now suppose that $(N,r)\in {\mathcal I}$ and prove that $\boldsymbol{L}_u$ admits an eigenvalue with positive real part.
As above, we are led to consider the function $\lambda\mapsto c(\lambda)$. Now, $c(0)<0$. Hence, there exists $\lambda_*>0$ such that $c(\lambda_*)=0\in\sigma(A)$.
This implies that Problem \eqref{sist-equ}, with $F_1=F_2=F_3=0$ and $\lambda=\lambda_*$, admits a non trivial solution, i.e., $\lambda_*\in\sigma(\boldsymbol{L}_u)$.
Again, the linearized stability principle implies that $(T_u,0,0)$ is unstable.
This completes the proof.
\end{proof}

The issue of the stability of the infected solution is obviously much more complicated.
For notational convenience we sort the eigenvalues of the realization $A$ of the Laplacian
in $L^2_{\mathbb C}$, with $H^2_{\sharp,\mathbb C}$ as a domain (see Appendix \ref{Laplace eigenvalues}),
into a non-increasing sequence $\{-\lambda_k\}$. Similarly, we denote by $\tilde e_k$ the eigenfunction associated with
the eigenvalue $\lambda_k$. This allows ut to expand any function $f\in L^2_{\sharp,\C}$ into the Fourier series
$$
f=\sum_{k=0}^{+\infty}f_k\tilde e_k,
$$
where $f_k$ denotes the $k$-th Fourier coefficient (with respect to the system $(\tilde e_k)$) of $f$.
As it is observed in the proof of
Theorem \ref{laplace-eigenvalues}, $\lambda_0$ is simple  and all the other eigenvalues are
semisimple, and their multiplicity can be computed explicitly.

We are going to prove the following results.

\begin{theorem}\label{stab-infected}
Under the hypothesis \eqref{hyp-Tmax}, it holds:
\begin{enumerate}[\rm (i)]
\item the infected equilibrium ${\mathbf X}_i= (T_i,I_i,V_i)$ is asymptotically stable for
$(N,r) \in {\mathcal I}\setminus {\mathcal P}$, where
${\mathcal P}$ is defined in \eqref{set-Pk} with $k=0$;
\item the infected equilibrium ${\mathbf X}_i= (T_i,I_i,V_i)$   is unstable in the interior of ${\mathcal P}$.
\end{enumerate}
\end{theorem}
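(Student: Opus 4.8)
The plan is to argue as in the proof of Theorem~\ref{stab-uninf}, i.e.\ via the linearized (in)stability principle (\cite[Chpt.~5, Cor.~5.1.6]{henry1981geometric}), so that everything reduces to locating the spectrum of the realization $\boldsymbol L_i$ in $(L^2_{\mathbb C})^2\times H^2_{\sharp,\mathbb C}$ of the linearization of System~\eqref{pde1}-\eqref{pde3} about ${\bf X}_i$. First I would write down
\[
\boldsymbol{\mathcal L}_i=
\begin{pmatrix}
a\,Id & 0 & -\gamma T_i\,Id\\[1mm]
\gamma V_i\,Id & -\mi\,Id & \gamma T_i\,Id\\[1mm]
0 & \mi N\,Id & d_V\Delta-\mv\,Id
\end{pmatrix},\qquad
a:=r-\mt-\frac{2rT_i}{T_{\max}}-\gamma V_i,
\]
noting that the equilibrium equation~\eqref{pde1-1} at ${\bf X}_i$ gives the identity $a=-\alpha T_i^{-1}-rT_iT_{\max}^{-1}<0$, valid on all of $\mathcal I$. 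Exactly as for $\boldsymbol L_u$, the operator $\boldsymbol L_i$ is a bounded perturbation of a sectorial diagonal operator, hence it is itself sectorial and generates an analytic strongly continuous semigroup; this analyticity is what makes the principle applicable despite the fact that the system is only partially dissipative.

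Next I would carry out the modal expansion announced in the introduction. Writing the resolvent equation $\lambda{\bf X}-\boldsymbol L_i{\bf X}={\bf F}$ in the eigenbasis $(\tilde e_k)$ of the Laplacian, $\boldsymbol L_i$ becomes block diagonal and on the $k$-th mode acts as the $3\times 3$ matrix $A_k$ obtained from $\boldsymbol{\mathcal L}_i$ on replacing $d_V\Delta$ by $-\delta_k$, with $\delta_k:=d_V\lambda_k\ge 0$ (so $\delta_0=0$, since $\lambda_0=0$). A short computation, in which the identity $\gamma NT_i=\mv$ cancels the term $\gamma T_i\cdot\mi N=\mi\mv$, gives
\[
p_k(\lambda):=\det(\lambda Id-A_k)=(\lambda-a)\big[\lambda(\lambda+\mi+\mv+\delta_k)+\mi\delta_k\big]+\gamma\mi\mv V_i,
\]
that is, $p_k(\lambda)=\lambda^3+a_2^{(k)}\lambda^2+a_1^{(k)}\lambda+a_0^{(k)}$ with $a_2^{(k)}=\mi+\mv+\delta_k-a$, $a_1^{(k)}=(\mi-a)\delta_k-a(\mi+\mv)$ and $a_0^{(k)}=\gamma\mi\mv V_i-a\mi\delta_k$; for $k=0$ this is precisely the cubic of the non-diffusive model of \cite{FB10}. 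Since $\delta_k\to+\infty$, the roots of $p_k$ converge to $\{a,-\mi\}$ (two of them) and to $-\infty$ (the third), so that only finitely many modes can carry a root with real part close to $0$; combining this with uniform-in-$k$ bounds for $(\lambda Id-A_k)^{-1}$ on right half-planes disjoint from $\bigcup_{k\ge 0}p_k^{-1}(0)$ --- bounds which also provide the $H^2_{\sharp,\mathbb C}$-regularity of the $V$-component of the resolvent --- one obtains $\sigma(\boldsymbol L_i)=\overline{\bigcup_{k\ge 0}p_k^{-1}(0)}$ and, crucially, that $\sup\{\re\lambda:\lambda\in\sigma(\boldsymbol L_i)\}$ is \emph{attained}. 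By \eqref{set-Pk}, $\mathcal P_k$ is exactly the part of $\mathcal I$ on which $p_k$ fails to be Hurwitz stable, and $\mathcal P=\mathcal P_0$.

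Statement~(ii) is then immediate: in the interior of $\mathcal P=\mathcal P_0$ the cubic $p_0$ has a complex conjugate pair of roots with positive real part, which are eigenvalues of $\boldsymbol L_i$ associated with space-independent eigenfunctions, so ${\bf X}_i$ is unstable. For~(i) I would use the Routh--Hurwitz criterion. Since $a<0$ and $a_0^{(0)}=\gamma\mi\mv V_i>0$ throughout $\mathcal I$, the coefficients $a_2^{(0)},a_1^{(0)},a_0^{(0)}$ are automatically positive, so $(N,r)\in\mathcal I\setminus\mathcal P=\mathcal I\setminus\mathcal P_0$ is equivalent to $a_1^{(0)}a_2^{(0)}-a_0^{(0)}>0$. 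Assuming this and writing $s:=\mi+\mv+\delta_k$, the inequalities $\mi>0$ and $a<0$ give, for every $k\ge 1$, that $a_2^{(k)}=s-a>0$, $a_0^{(k)}=\gamma\mi\mv V_i-a\mi\delta_k>0$ and
\begin{eqnarray*}
a_1^{(k)}a_2^{(k)}-a_0^{(k)}&=&\mi\,\delta_k\,s-a\,s(s-a)-\gamma\mi\mv V_i\\
&=&\big(a_1^{(0)}a_2^{(0)}-a_0^{(0)}\big)+\mi\,\delta_k\,s-a\,\delta_k\big(2(\mi+\mv)+\delta_k-a\big)>0,
\end{eqnarray*}
because all the terms added to $a_1^{(0)}a_2^{(0)}-a_0^{(0)}$ are non-negative. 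Hence every $p_k$ is Hurwitz stable, so $\sigma(\boldsymbol L_i)\subset\{\re\lambda<0\}$; since this supremum is attained it is strictly negative, and the stability principle yields the asymptotic stability of ${\bf X}_i$. (The same monotonicity in $\delta_k$, together with $\lambda_0\le\lambda_1\le\cdots$, shows that the $\mathcal P_k$ form a non-increasing sequence and that $\mathcal P_k=\emptyset$ once $\delta_k$ is large enough, so only finitely many of them are nonempty.)

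I expect the spectral step to be the main obstacle. Because the problem is only partially dissipative, the resolvent of $\boldsymbol L_i$ is not compact and $\sigma(\boldsymbol L_i)$ need not consist of eigenvalues only; one must carefully justify both the identity $\sigma(\boldsymbol L_i)=\overline{\bigcup_{k\ge 0}p_k^{-1}(0)}$ --- with the uniform-in-$k$ resolvent estimates and the $H^2_{\sharp,\mathbb C}$-regularity of the third component --- and the fact that $\sup\{\re\lambda:\lambda\in\sigma(\boldsymbol L_i)\}$ is attained, this last point being precisely what upgrades ``every $p_k$ Hurwitz'' to a strictly negative spectral bound, hence to genuine exponential stability. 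The algebraic part (the explicit form of $p_k$ and the Routh--Hurwitz monotonicity in $\delta_k$, which in particular gives $\mathcal P_k\subseteq\mathcal P_0$) is, by comparison, routine.
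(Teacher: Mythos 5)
Your proposal is correct and reaches the same conclusion by the same overall strategy (analyticity of the semigroup, modal decomposition into the $3\times 3$ blocks, Routh--Hurwitz, linearized stability principle), but the decisive step is handled quite differently and, in my view, more cleanly. Where the paper expands $D_{2,k}(N,r)=d_{1,k}d_{2,k}-d_{3,k}$ as a quadratic $A_kr^2+B_k(N)r+C_k(N)$ in $r$, studies its discriminant, introduces the thresholds $\Lambda_0,\Lambda_2$ and the roots $N_{0,k},N_{1,k},N_{2,k},r_{1,k},r_{2,k}$, and splits into Cases I--III (all under the largeness hypothesis \eqref{hyp-Tmax}) before deducing $\mathcal P_k\subseteq\mathcal P_0$ in Proposition \ref{P inclusions}, you exploit the structural form $p_k(\lambda)=(\lambda-a)[\lambda(\lambda+\mi+\mv+\delta_k)+\mi\delta_k]+\gamma\mi\mv V_i$ with $a<0$ to get the one-line identity $D_{2,k}-D_{2,0}=\delta_k\bigl[\mi(\mi+\mv+\delta_k)-a(2(\mi+\mv)+\delta_k-a)\bigr]\ge 0$, which together with the automatic positivity of $d_{1,k}$ and $d_{3,k}$ immediately yields ``$p_0$ Hurwitz $\Rightarrow$ $p_k$ Hurwitz for all $k$''. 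I checked the identity: it is correct and coincides with the paper's $d_{j,k}$ after substituting $a=-\mv r/(\gamma NT_{\max})-\alpha\gamma N/\mv$ and $\gamma NT_i=\mv$. Your route needs no case analysis and no largeness of $T_{\max}$ for this step; what it does not deliver is the explicit geometry of the individual sets $\mathcal P_k$ (which the paper uses later, e.g.\ in the Hopf analysis), and you still implicitly rely on the $k=0$ analysis of \cite{FB10}/Subsection \ref{subsect-4.2} --- where \eqref{hyp-Tmax} does enter --- to identify $\mathcal P_0$ as parametrized in \eqref{set-Pk} with $\{(N,r)\in\mathcal I: D_{2,0}(N,r)\le 0\}$. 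Two further points in your favour: you prove instability in the interior of $\mathcal P$ directly from the $k=0$ mode (a conjugate pair of roots of $p_0$ with positive real part, with constant eigenfunctions), whereas the paper simply refers to \cite{FB10}; and you are explicit that the spectrum is the \emph{closure} of $\bigcup_k\sigma_k$ (the accumulation points $a$ and $-\mi$ being the essential spectrum) and that the spectral bound is attained and hence strictly negative --- a point the paper's final paragraph passes over quickly, though the needed uniform gap is established later in Step 2 of the proof of Theorem \ref{hopf2}.
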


\subsection{The resolvent equation}
As in the proof of Theorem \ref{stab-uninf}, we do not stress explicitly the dependence on $r$ and $N$ of the operators and
the sets that we consider in what follows. For $(N,r) \in \mathcal I$,
the linearization around ${\mathbf X}_i=(T_i,I_i,V_i)$
of System \eqref{pde1}-\eqref{pde3} is associated with the
linear operator
\begin{align}\label{jacobian-inf}
\boldsymbol{{\mathcal L}}_i=
\begin{pmatrix}
 -\left (\frac{\mv r}{\gamma NT_{\max}}+\frac{\alpha\gamma N}{\mv}\right )Id & 0 & -\frac{\mv}{N}Id \\[2mm]
 \left [\frac{\alpha\gamma N}{\mv}-\mt+r\left (1-\frac{\mv}{\gamma N T_{\max}}\right )\right ]Id & -\mi Id  & \frac{\mv}{N}Id \\[2mm]
    0 & N \mi Id  & d_V\Delta-\mv Id
\end{pmatrix}.
\end{align}

\begin{proposition} \label{spectrum Li}
For any $(N,r)\in {\mathcal I}$ the realization $\boldsymbol{L}_i$
of the operator $\boldsymbol{{\mathcal L}}_i$ in $(L^2_{\mathbb C})^3$ with domain
$D(L_i)=L^2_{\mathbb C}\times L^2_{\mathbb C}\times H^2_{\sharp,\mathbb C}$ generates an analytic
strongly continuous semigroup. Moreover, the spectrum of $\boldsymbol{L}_i$ is given by
\begin{equation}
\sigma (\boldsymbol{L}_i)=
\left\{-\frac{\mv r}{\gamma NT_{\max}}
-\frac{\alpha\gamma N}{\mv},-\mi\right\}\cup
\bigcup_{k\in\N}
\sigma_k,
\label{spectrum-k}
\end{equation}
where, for any $k\in\N$, $\sigma_k$ is the spectrum of the matrix
\begin{eqnarray*}
M_k=\begin{pmatrix}
 -\frac{\mv r}{\gamma NT_{\max}}-\frac{\alpha\gamma N}{\mv} & 0 & -\frac{\mv}{N} \\[2mm]
  \frac{\alpha\gamma N}{\mv}-\mt+r\left (1-\frac{\mv}{\gamma N T_{\max}}\right ) & -\mi  & \frac{\mv}{N} \\[2mm]
    0 & N \mi  & -d_V\lambda_k-\mv  \\
\end{pmatrix}.
\end{eqnarray*}
\end{proposition}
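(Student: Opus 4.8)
The plan is to follow the scheme of the proof of Theorem~\ref{stab-uninf}, the new difficulty being that the first two rows of $\boldsymbol{\mathcal L}_i$ contain no differential operator. To obtain that the realization $\boldsymbol L_i$ generates an analytic strongly continuous semigroup, I would split $\boldsymbol L_i=\boldsymbol{\mathcal D}+\boldsymbol{\mathcal B}$, where $\boldsymbol{\mathcal D}$ is the diagonal operator with entries $aId$ (having put $a:=-\frac{\mv r}{\gamma NT_{\max}}-\frac{\alpha\gamma N}{\mv}$), $-\mi Id$ and $d_V\Delta-\mv Id$, on $L^2_{\C}\times L^2_{\C}\times H^2_{\sharp,\C}$, and $\boldsymbol{\mathcal B}$ is the bounded operator carrying the constant off-diagonal entries of $\boldsymbol{\mathcal L}_i$. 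Since $d_V\Delta-\mv Id$ generates an analytic semigroup in $L^2_{\C}$, $\boldsymbol{\mathcal D}$ is sectorial; hence, by \cite[Prop.~2.4.1(i)]{lunardi1995analytic}, so is its bounded perturbation $\boldsymbol L_i$, and the density of $H^2_{\sharp,\C}$ in $L^2_{\C}$ gives strong continuity, exactly as for $\boldsymbol L_u$.

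For the spectrum, I would examine the resolvent equation $\lambda\mathbf X-\boldsymbol L_i\mathbf X=\mathbf F$ componentwise, with $\mathbf X\in D(\boldsymbol L_i)$ and $\mathbf F\in(L^2_{\C})^3$, writing $c:=\frac{\alpha\gamma N}{\mv}-\mt+r\big(1-\frac{\mv}{\gamma NT_{\max}}\big)$ and recalling that $c=\gamma V_i>0$ on $\mathcal I$ by Theorem~\ref{thm-2.1}. If $\lambda\neq a$, the first scalar equation expresses $T$ through $V$; if in addition $\lambda\neq-\mi$, the second one then expresses $I$ through $V$; substituting both into the third equation produces the self-contained equation $d_V\Delta V-q(\lambda)V=G$, with $q(\lambda):=\lambda+\mv-\frac{\mi\mv(\lambda-a-c)}{(\lambda+\mi)(\lambda-a)}$ and $G\in L^2_{\C}$ depending linearly on $\mathbf F$. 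Since the realization of $d_V\Delta$ in $L^2_{\C}$ is self-adjoint with spectrum $\{-d_V\lambda_k:k\in\N\}$ (Appendix~\ref{Laplace eigenvalues}), this equation has a unique solution $V\in H^2_{\sharp,\C}$ depending boundedly on $G$ whenever $q(\lambda)\neq-d_V\lambda_k$ for all $k$, the numbers $q(\lambda)+d_V\lambda_k$ then being bounded away from $0$ because $d_V\lambda_k\to+\infty$. A short computation giving $\det(\lambda I-M_k)=(\lambda-a)\big[(\lambda+\mi)(\lambda+d_V\lambda_k+\mv)-\mi\mv\big]+c\,\mi\mv$ shows that, for $\lambda\notin\{a,-\mi\}$, the equality $q(\lambda)=-d_V\lambda_k$ is equivalent to $\lambda\in\sigma_k$. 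Recovering $I$ and then $T$, one concludes that every $\lambda\notin\{a,-\mi\}\cup\bigcup_k\sigma_k$ belongs to $\rho(\boldsymbol L_i)$.

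For the opposite inclusion I would exploit the block structure of $\boldsymbol L_i$: each three-dimensional subspace $\{\mathbf v\,\tilde e_k:\mathbf v\in\C^3\}$ of $(L^2_{\C})^3$ is invariant under $\boldsymbol L_i$, on which $\boldsymbol L_i$ acts as $M_k$ (because $\Delta\tilde e_k=-\lambda_k\tilde e_k$), so each element of $\sigma_k$ is an eigenvalue of $\boldsymbol L_i$ and $\bigcup_k\sigma_k\subseteq\sigma(\boldsymbol L_i)$. Finally $a,-\mi\in\sigma(\boldsymbol L_i)$: by the computation above, $\det(\lambda I-M_k)/(d_V\lambda_k)\to(\lambda-a)(\lambda+\mi)$ as $\lambda_k\to+\infty$, so two of the three eigenvalues of $M_k$ tend to $a$ and to $-\mi$ (the third to $-\infty$), whence $a$ and $-\mi$ are limit points of $\bigcup_k\sigma_k$ and lie in the closed set $\sigma(\boldsymbol L_i)$; alternatively, one checks directly that $\lambda I-\boldsymbol L_i$ is not surjective there, since at $\lambda=a$ the first equation forces $V=\frac{N}{\mv}F_1$ and at $\lambda=-\mi$ the first two equations force (using $c\neq0$) $V$ to be an explicit $L^2_{\C}$-combination of $F_1$ and $F_2$, neither of which need lie in $H^2_{\sharp,\C}$, contradicting the third equation. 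Collecting the three steps yields \eqref{spectrum-k}.

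The main obstacle is precisely the partial parabolicity of the system: the resolvent of $\boldsymbol L_i$ is not compact, so the spectrum need not be discrete, and the whole argument hinges on carefully tracking the $H^2_{\sharp,\C}$-regularity and the uniform boundedness of the solution of the reduced equation for $V$, and on correctly pinning down the two non-elliptic spectral values $a$ and $-\mi$, which are not eigenvalues of any $M_k$ but still belong to $\sigma(\boldsymbol L_i)$ for functional-analytic reasons (accumulation of the $\sigma_k$, and failure of surjectivity in the non-differentiated equations).
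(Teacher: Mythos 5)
Your argument is correct, and it reaches the same statement by a noticeably different route than the paper. The paper diagonalizes the full $3\times 3$ system along the Fourier basis $\{\tilde e_k\}$, writes the mode-by-mode resolvent equations $(\lambda Id-M_k){\bf v}_k={\bf f}_k$, and then proves convergence of the resulting Fourier series via explicit Cramer-type formulas for $v_{1,k},v_{2,k},v_{3,k}$ together with the asymptotics ${\mathcal D}_k(\lambda)\sim {\rm const}\cdot\lambda_k$; you instead eliminate $T$ and $I$ analytically (exactly as in the proof of Theorem \ref{stab-uninf} for the uninfected state) and reduce to a single scalar resolvent equation $d_V\Delta V-q(\lambda)V=G$ for the periodic Laplacian, with the identity $(\lambda+\mi)(\lambda-a)\bigl(q(\lambda)+d_V\lambda_k\bigr)=\det(\lambda I-M_k)$ serving as the bridge between the scalar picture and the matrices $M_k$ (I checked this identity and your formula for $q$; both are right). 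The two approaches are equivalent in substance, but yours buys a cleaner verification of bounded invertibility (only the distance of $q(\lambda)$ from $\{-d_V\lambda_k\}$ matters, and $H^2$-regularity of $V$ is immediate), at the price of having to track the excluded values $\lambda=a$ and $\lambda=-\mi$ separately. Your treatment of those two points is in fact tighter than the paper's: where the paper asserts that the Fourier series ``do not, in general, converge'' and places $a,-\mi$ in the essential spectrum, you observe that $\det(\lambda I-M_k)/(d_V\lambda_k)\to(\lambda-a)(\lambda+\mi)$ locally uniformly, so that $a$ and $-\mi$ are accumulation points of $\bigcup_k\sigma_k$ and lie in the closed set $\sigma(\boldsymbol{L}_i)$ — a fully rigorous argument. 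One small imprecision: in your alternative surjectivity-failure argument at $\lambda=-\mi$, the nondegeneracy actually needed to solve the first two equations for $V$ is $a+c+\mi\neq 0$ (which does hold here, since $a+c+\mi=\mi-\mt+r\bigl(1-\frac{2\mv}{\gamma NT_{\max}}\bigr)>0$ under \eqref{bio-hypoth-1} and \eqref{hyp-Tmax}), not merely $c\neq 0$; but since this is only a backup to the accumulation-point argument, it does not affect the proof.
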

\begin{proof}
The same arguments as in the proof of Theorem \ref{stab-uninf}
show that $\boldsymbol{L}_i$ generates an analytic
strongly continuous semigroup in $(L^2_{\mathbb C})^3$.

Let us determine its
spectrum. For this purpose we use the discrete Fourier transform.
If a function ${\bf v}=(v_1,v_2,v_3)$ in $L^2_{\mathbb C}\times L^2_{\mathbb C}\times
H^2_{\sharp,\mathbb C}$ solves the resolvent equation
$\lambda {\bf v}-\boldsymbol{L}_i{\bf v}={\bf f}$, for some $\lambda\in\C$ and ${\bf f}=(f_1,f_2,f_3)$ in
$(L^2_{\mathbb C})^3$, then its Fourier coefficients ${\bf v}_k=(v_{1,k},v_{2,k},v_{3,k})$
($k=0,1,\ldots$) solve the infinitely many equations
$(\lambda Id-M_k){\bf v}_k={\bf f}_k$ ($k=0,1,\ldots$),
where ${\bf f}_k=(f_{1,k},f_{2,k},f_{3,k})$ and $f_{j, k}$ denotes the $k$-th Fourier coefficient of the
function $f_j$ ($j=1,2,3$). Clearly, any eigenvalue of $M_k$
($k=0,1,\ldots$) is an eigenvalue of $\boldsymbol{L}_i$. Therefore,
$\sigma (\boldsymbol{L}_i)\supset \bigcup_{k\in\N} \sigma_k$.

On the other hand, if $\lambda\not\in\sigma_k$ for any
$k=0,1,\ldots$, then all the coefficients
$(v_{1,k},v_{2,k},v_{3,k})$ are uniquely determined through the
formulae
\begin{align}
v_{1,k}=&\frac{1}{{\mathcal D}_k(\lambda)}\left\{
[(\lambda+\mi)(\lambda+d_V\lambda_k+\mv)-\mi\mv]f_{1,k}
-\mi\mv f_{2,k}-\frac{\mv}{N}(\lambda+\mi)f_{3,k}\right\},
\label{A}\\[1mm]
v_{2,k}=&\frac{1}{{\mathcal D}_k(\lambda)}\bigg\{
\frac{\lambda+d_V\lambda_k+\mv}{\gamma\mv N T_{\max}}
[(-\mv^2r+\alpha\gamma^2 N^2T_{\max}-\gamma\mt\mv N T_{\max} +\gamma\mv rN T_{\max})f_{1,k}\notag\\[1mm]
&\qquad\quad\qquad\qquad\qquad\;\, +
(\lambda\gamma\mv NT_{\max}+\mv^2r+\alpha\gamma^2 N^2T_{\max})f_{2,k}]\notag\\
&\qquad\quad\;+\frac{\mv}{\gamma N^2T_{\max}}(2\mv r-\gamma rN T_{\max}+\gamma\mt  N T_{\max}
+\lambda\gamma N T_{\max})f_{3,k}\bigg\},
\label{B}
\\[2mm]
v_{3,k}=&\frac{1}{{\mathcal D}_k(\lambda)}
\bigg\{\frac{\mi}{\gamma\mv T_{\max}}[(\alpha\gamma^2 N^2T_{\max}-\gamma\mt\mv NT_{\max}+\gamma\mv rN T_{\max}-\mv^2r)f_{1,k}\notag\\
&\qquad\qquad\qquad\quad+(\lambda\gamma\mv NT_{\max}+\mv^2r+\alpha\gamma^2 N^2T_{\max})f_{2,k}]\notag\\
&\qquad\quad+\frac{\lambda+\mi}{\gamma\mv NT_{\max}}
(\lambda\gamma\mv  N T_{\max}+\mv^2r+\alpha\gamma^2 N^2T_{\max})f_{3,k}\bigg\},
\label{C}
\end{align}
where
\begin{equation}
{\mathcal D}_k (\lambda)= \lambda^3 + d_{1,k} \lambda^2 + d_{2,k} \lambda +
d_{3,k},\qquad\;\,k=0,1, \ldots,
\label{Dklambda}
\end{equation}
and
\begin{align}
d_{1,k}&=d_{1,k}(N,r)=d_V\lambda_k+\mi+\mv+\frac{\mv r}{\gamma NT_{\max}}+\frac{\alpha\gamma N}{\mv},
\label{d1k} \\
d_{2,k}&=d_{2,k}(N,r)= \mi d_V\lambda_k+\alpha\gamma N+\frac{\alpha\gamma N}{\mv}(\mi+d_V\lambda_k)
+\frac{\mv r}{\gamma NT_{\max}}(\mi+\mv+d_V\lambda_k),
\label{d2k} \\
d_{3,k}&=d_{3,k}(N,r)= \mi\mv(r-\mt)+\frac{\alpha\gamma\mi N}{\mv}(\mv+d_V\lambda_k)
+\frac{\mi\mv r}{\gamma NT_{\max}}(d_V\lambda_k-\mv).
\label{d3k}\end{align}

Note that, if $\lambda$ differs from both $-\frac{\mv r}{\gamma
NT_{\max}}-\frac{\alpha\gamma N}{\mv}$ and $-\mi$, then
\begin{eqnarray*}
{\mathcal D}_k(\lambda)\sim
d_V\left\{\lambda^2+\lambda\left (\mi+\frac{\alpha\gamma N}{\mv}
+\frac{\mv r}{\gamma N T_{\max}}\right )
+\frac{\alpha\gamma\mi N}{\mv}+\frac{\mi\mv r}{\gamma NT_{\max}}\right\}\lambda_k,
\end{eqnarray*}
as $k\to +\infty$. Hence, for any $\lambda\notin\left\{-\frac{\mv r}{\gamma
NT_{\max}}-\frac{\alpha\gamma N}{\mv},-\mi\right\}\cup\bigcup_{k\in\N}\sigma_k$, it holds that
\begin{align*}
v_{1,k} &\sim
\frac{\lambda+\mi}{\lambda^2+\lambda\left (\mi+\frac{\alpha\gamma N}{\mv}
+\frac{\mv r}{\gamma N T_{\max}}\right )
+\frac{\alpha\gamma\mi N}{\mv}+\frac{\mi\mv r}{\gamma NT_{\max}}}f_{1,k},\\[2mm]
v_{2,k}&\sim \frac{\left (-\frac{\mv r}{\gamma NT_{\max}}
+\frac{\alpha\gamma N}{\mv}+r-\mt\right )f_{1,k}
+\left (\frac{\mv r}{\gamma NT_{\max}}+\frac{\alpha\gamma N}{\mv}
+\lambda\right )f_{2,k}}{\lambda^2+\lambda\left (\mi
+\frac{\alpha\gamma N}{\mv}+\frac{\mv r}{\gamma N T_{\max}}\right )
+\frac{\alpha\gamma\mi N}{\mv}+\frac{\mv\mi r}{\gamma NT_{\max}}},\\[2mm]
v_{3,k} &\sim\frac{1}{d_V\lambda_k\left\{\lambda^2
+\lambda(\mi+\frac{\alpha\gamma N}{\mv}+\frac{\mv r}{\gamma N T_{\max}})+\frac{\alpha\gamma\mi N}{\mv}
+\frac{\mi\mv r}{\gamma NT_{\max}}\right\}}\\
&\qquad\quad\times \bigg\{ \bigg (\frac{\alpha\gamma\mi N^2}{\mv}
-\frac{\mi\mv r}{\gamma T_{\max}}+\mi rN-\mi\mt N\bigg )f_{1,k}
+\bigg (\frac{\alpha\gamma \mi N^2}{\mv}+\frac{\mi\mv r}{\gamma T_{\max}}+\lambda\mi N\bigg )f_{2,k}\\
&\qquad\qquad\quad +(\lambda+\mi)\bigg (\lambda+\frac{\alpha\gamma N}{\mv}
+\frac{\mv r}{\gamma NT_{\max}}\bigg )f_{3,k}\bigg\},
\end{align*}
as $k\to +\infty$. Thus, the sequences $\{v_{1,k}\}$,
$\{v_{2,k}\}$ and $\{\lambda_kv_{3,k}\}$ are square-summable. This shows
that the series whose Fourier coefficients are $v_{1,k}$, $v_{2,k}$
and $v_{3,k}$, respectively, converge in $L^2_{\mathbb C}$ (the first two ones)
and in $H^2_{\sharp,\mathbb C}$ (the latter one). The inclusion
$\sigma\subset \left\{-\frac{\mv r}{\gamma
NT_{\max}}-\frac{\alpha\gamma N}{\mv},-\mi\right\}\cup
\bigcup_{k\in\N}\sigma_k$
follows.
On the other hand, the previous computations show that, if $\lambda=-\mi$ or $\lambda=-\frac{\mv r}{\gamma NT_{\max}}-\frac{\alpha\gamma
N}{\mv}$, then the series having $v_{1,k}$, $v_{2,k}$ and $v_{3,k}$ as Fourier coefficients do not, in general, converge
in $L^2$ (the first two ones) and in $H^2_{\sharp}$ (the latter one). Hence, these values of $\lambda$ belong to the
essential spectrum of $\boldsymbol{L}_i$. Thus, \eqref{spectrum Li} is proved.
\end{proof}

\subsection{Study of $\sigma_k$}
\label{subsect-4.2}
Clearly, at fixed $k=0,1, \ldots$, each set $\sigma_k$ consists of at most three eigenvalues $\nu_{j,k}, j=1,2,3$,
either all real, or one real and two complex
conjugates, which verify the equation
$\lambda^3 + d_{1,k} \lambda^2 + d_{2,k} \lambda + d_{3,k}= 0$,
with $d_{j,k}$ $(j=1,2,3)$  being given by \eqref{d1k}-\eqref{d3k}.

The Routh-Hurwitz criterion enables us to determine whether
the elements of $\sigma_k$  have negative real parts. The latter
holds if and only if $d_{1,k}$, $d_{3,k}$ and the leading Hurwitz determinant
$D_{2,k}=d_{1,k} d_{2,k}-d_{3,k}$ are positive. The case $k=0$
corresponds to the system of ODEs  considered in \cite{FB10}.

Obviously, $d_{1,k}>0$. As far as $d_{3,k}$ is concerned, we remark
that $d_{3,k}(N,r)>d_{3,0}(N,r)$, which is positive in ${\mathcal I}$
and vanishes at $N_{\rm crit}(r)$.

For $(N,r) \in {\mathcal I}$, we compute the Hurwitz determinant $D_{2,k}(N,r)$ and we get
\begin{equation}
D_{2,k}(N,r)=\frac{1}{\gamma^2\mv^2N^2T_{\max}^2}\left(A_kr^2+B_k(N)r+C_k(N)\right),
\label{D2k}
\end{equation}
where
\begin{align*}
&A_k= \mv^4(\mi+\mv+d_V\lambda_k),\\[2mm]
&B_k(N)=\gamma\mv^2 NT_{\max} \Big [
\mv d_V^2\lambda_k^2+2\alpha\gamma d_V N\lambda_k+2\mv^2d_V\lambda_k+2\mi\mv d_V\lambda_k\\
  &\qquad\qquad\qquad\qquad\;
-\gamma\mi\mv NT_{\max}+2\alpha\gamma\mi N+2\alpha\gamma \mv N+\mi^2\mv+3\mi\mv^2+\mv^3\Big ],\\[2mm]
&C_k(N)={N}^{2}{\gamma}^{2}T_{\max}^{2}\Big (\mi\mv^2d_V^2\lambda_k^2+\alpha\gamma\mv d_V^2N\lambda_k^2
+\alpha^2\gamma^2d_VN^2\lambda_k+2\alpha\gamma\mv^2d_VN\lambda_k\\
&\qquad\qquad\qquad\quad\;\;\;\;
+\mi\mv^3d_V\lambda_k+\mi^2\mv^2d_V\lambda_k+2\alpha\gamma\mi\mv d_VN\lambda_k
+\alpha\gamma \mv^3N+\alpha\gamma\mi\mv^2N\\
&\qquad\qquad\qquad\quad\;\;\;\;
+\alpha\gamma \mi^2\mv N+\alpha^2\gamma^2\mv N^2+\alpha^2\gamma^2\mi N^2+\mi\mt\mv^3\Big ).
\end{align*}
Both $A_k$ and $C_k(N)$ are positive, whereas $B_k(N)$ vanishes at
\begin{eqnarray}\label{widehat-N}
N_{0,k}=-\frac{\mv(d_V^2\lambda_k^2
+2(\mi+\mv)d_V\lambda_k+3\mi\mv+\mi^2+\mv^2)}{\gamma(2\alpha d_V\lambda_k+2\alpha\mi+2\alpha\mv-\mi\mv T_{\max})}.
\end{eqnarray}
\begin{figure}[htp]
\centering
\includegraphics[width=9cm,height=6cm]{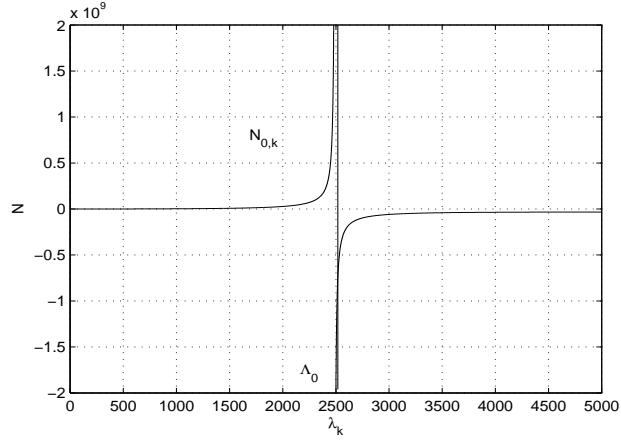}
\vskip -.4truecm
\caption{Profile of the curve $N_{0,k}$.}
\label{N_0_k}
\end{figure}

In \eqref{widehat-N}, as a function of $\lambda_k$, the denominator vanishes at
\begin{eqnarray*}
\Lambda_0=\frac{\mi\mv T_{\max}}{2\alpha d_V}-\frac{\mi+\mv}{d_V},
\end{eqnarray*}
which is positive and generically does not meet any of the $\lambda_k$'s, $k\ge 1$.
There are two cases (see Fig. \ref{N_0_k}):
\begin{enumerate}[(i)]
\item
$0 \leq \lambda_k<\Lambda_0$, hence $N_{0,k}>0$. Then,
$B_k(N)>0$ if $0 < N<N_{0,k}$;
\item
$\lambda_k>\Lambda_0$, hence $N_{0,k}<0$ and in this case
$B_k(N)$ is positive for any $N>0$.
\end{enumerate}

The sign of the polynomial
$A_kr^2+B_k(N)r+C_k(N)$ is obviously related to the discriminant
\begin{eqnarray*}
\Delta_k(N) =
(B_k(N))^2-4A_k C_k(N)=\gamma^{2}\mv^5N^2 T_{\max}^2(a_k N^2+b_kN+c_k),
\end{eqnarray*}
which in turn is of the sign of
$a_k N^2+b_k N+c_k $.
The coefficients $a_k$, $b_k$ and $c_k$
read:
\begin{align*}
a_k =&{\gamma}^2\mi T_{\max} \Big\{\mi\mv T_{\max}-4\alpha d_V\lambda_k
-4\alpha(\mi + \mv) \Big\},\\
b_k =&-2 \gamma \mi\mv \Big\{[d_V^2\lambda_k^2 +2(\mi+\mv)d_V\lambda_k+\mi^2+3 \mi\mv+\mv^2]T_{\max} - 4\alpha d_V\lambda_k
-4\alpha(\mi+\mv)\Big\},\\
c_k =& \mv\Big\{(d_V^2\lambda_k^2-\mi^2)^2
+4\mv d_V^3\lambda_k^3+6\mv(\mi+\mv)d_V^2\lambda_k^2\\
&\quad\;+4\mv[\mv^2+3\mi\mv+\mi(2\mi-\mt)]d_V\lambda_k\\
&\quad\;+2\mi^2\mv(3\mi-2\mt)+6\mi\mv^3+\mi\mv^2(11\mi-4\mt)+\mv^4\Big\}.
\end{align*}
Let us examine the signs of these coefficients.
\begin{enumerate}[(i)]
\item
As  a function of $\lambda_k$, the coefficient $a_k$ vanishes at
\begin{eqnarray*}
\Lambda_2=\frac{\mi\mv T_{\max}}{4\alpha d_V}- \frac{\mi+\mv}{d_V}
= \Lambda_0 - \frac{\mi\mv T_{\max}}{4\alpha d_V}.
\end{eqnarray*}
Clearly, $\Lambda_2$ is positive thanks to \eqref{hyp-Tmax} and, as $\Lambda_0$, generically
does not meet any of the $\lambda_k$'s for $k\geq 0$.
Then, $a_k$ is positive if $0\leq \lambda_k<\Lambda_2$ and negative otherwise.
\item
$b_k<0$ due the hypothesis \eqref{hyp-Tmax}.
\item
$c_k>0$ under the biologically relevant hypothesis
$\mi>\mt$ (see \eqref{bio-hypoth-1}).
\end{enumerate}
Next we compute:
\begin{align*}
\delta_k := &b_k^2-4a_kc_k\\
=&16 {\gamma}^{2}\mi\mv ( \mi+\mv+d_V\lambda_k) \\
&\;\;\times
 \Big\{\alpha (d_V^2\lambda_k^2-\mi^2)^2T_{\max}
+4\alpha\mv d_V^3 T_{\max}\lambda_k^3
+(\mi^2\mv T_{\max}+6\alpha\mv^2+4\alpha\mi\mv)d_V^2T_{\max}\lambda_k^2\\
&\quad\quad +
\mv\big [\mi^2(\mi +\mv) T_{\max}^2+4\alpha(\mi^2+2\mi\mv+\mv^2-\mi\mt)T_{\max}
+4\alpha^2\mi \big ]
d_V\lambda_k\\
&\quad\quad+\mi^2\mt\mv^2 T_{\max}^2
+\alpha[\mi^4+4 \mi\mv^3+\mv^4
+4\mi^2\mv(\mi-\mt)+\mi\mv^2(5\mi-4\mt)]T_{\max}\\
&\quad\quad
+4\alpha^2\mi^2\mv+4\alpha^2\mi\mv^2\Big\}.
\end{align*}
Again, thanks to the hypothesis $\mi>\mt$, $\delta_k$ is
always positive. Therefore, the roots of
$a_kN^2+b_kN+c_k =0$, namely the ones of $\Delta_k(N)=0$,  are:
\begin{eqnarray}\label{N1-N2}
N_{1,k}=\frac{-b_k-\sqrt{\delta_k}}{2a_k}, \qquad\;\,
N_{2,k}=\frac{-b_k+\sqrt{\delta_k}}{2a_k}.
\end{eqnarray}
\begin{remark}\label{position N0k}
Since $B_k(N_{0,k})=0$ and $A_k(N_{0,k})$, $C_k(N_{0,k})$ are both positive, $a_k(N_{0,k}))^2+b_kN_{0,k}+c_k <0$.
This provides us with a useful information regarding the position of $N_{0,k}$
with respect to $N_{1,k}$ and $N_{2,k}$, according to
\eqref{N1-N2}, i.e., $N_{1,k}<N_{0,k}<N_{2,k}$ if $a_k>0$ and
$N_{0,k}<N_{2,k}$ or $N_{0,k}>N_{1,k}$ if $a_k<0$.
 In particular, $N_{0,k}$ can meet neither $N_{1,k}$ nor $N_{2,k}$.
\end{remark}

Now we are in a position to begin the discussion, depending on the
position of $\lambda_k$.
Let us distinguish three cases.

\begin{figure}[htp]
\centering
\includegraphics[width=9cm,height=6cm]{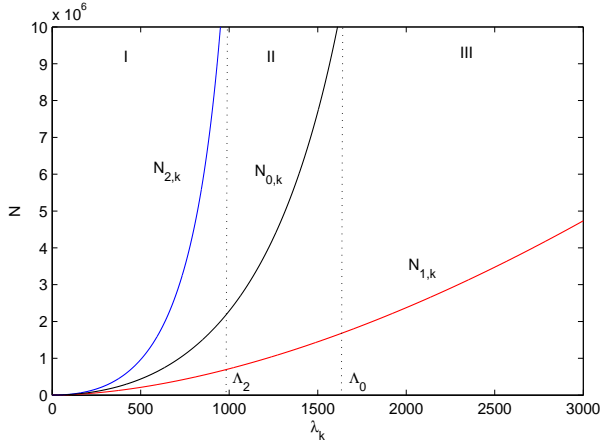}
\vskip -.4truecm
\caption{Comparison of the three curves $N_{0,k}$, $N_{1,k}$ and $N_{2,k}$,
as functions of $\lambda_k$.}
\label{lambda_k vs N}
\end{figure}

\paragraph{\bf Case I: $\boldsymbol{0\leq \lambda_k<\Lambda_2}$.}
In this situation $a_k>0$, $b_k<0$ and $c_k>0$ and $k$ ranges in a finite set of indexes.
It is an extension of the case $k=0$, see \cite{FB10}. It follows from Remark \ref{position N0k}
that $B_k$ vanishes between $N_{1,k}$ and $N_{2,k}$ which are both positive.
In particular, for $T_{\max}$ large enough (as we are assuming), $N_{1,k}>\mv/(\gamma T_{\max})$.
Indeed,
\begin{align*}
&a_k\left (\frac{\mv}{\gamma T_{\max}}\right )^2+b_k\frac{\mv}{\gamma T_{\max}}+c_k\\
=&
\mv(d_V^2\lambda_k^2-\mi^2)^2+4\mv^2d_V^3\lambda_k^3+4\mi\mv^2d_V^2\lambda_k^2+6\mv^3d_V^2\lambda_k^2+8\mi\mv^3 d_V\lambda_k\\
&+4\mi\mv^2(\mi-\mt) d_V\lambda_k
+4\mv^4 d_V\lambda_k+2\mi^3\mv^2+2\mi^2\mv^3+2\mi^2\mv^2(\mi-\mt)\\
&+4\mi\mv^4+4\mi\mv^3(\mi-\mt)
+\mv^5+o(1),
\end{align*}
as $T_{\max}\to +\infty$.
Again, since $\mi>\mt$,
\begin{eqnarray*}
a_k\left (\frac{\mv}{\gamma T_{\max}}\right )^2+b_k\frac{\mv}{\gamma T_{\max}}+c_k>0,
\end{eqnarray*}
if $T_{\max}$ is large enough. It follows that either $\mv/(\gamma T_{\max})<N_{1,k}$ or $\mv/(\gamma T_{\max})>N_{2,k}$.
But as it is immediately seen, $N_{2,k}>\mv/(\gamma T_{\max})$. Indeed,
\begin{align*}
N_{2,k}\sim &\frac{d_V^2\lambda_k^2+2(\mi+\mv)d_V\lambda_k+\mi^2+3\mi\mv+\mv^2}{\gamma\mi T_{\max}}\\
&+\frac{\sqrt{\mv d_V^2\lambda_k^2+
+(\mv^2+\mi\mv)d_V\lambda_k+\mt\mv^2}}{2\gamma^2\mi\mv T_{\max}}\\
\ge &\frac{3\mv}{\gamma T_{\max}},
\end{align*}
as $T_{\max}\to +\infty$. Hence, $\mv/(\gamma T_{\max})<N_{1,k}$ as it has been claimed.

We consider four subcases depending on the position of $N$ with respect to
$N_{1,k}$ and $N_{2,k}$.
\begin{enumerate}[(i)]
\item
Assume $N_{1,k} <N <N_{2,k}$. Then,
$\Delta_k(N)<0$. Since $A_k>0$, it follows that $D_{2,k}(N,r)>0$ for all $r > r_{\rm crit}(N)$ (see \eqref{D2k} and recall
that we are taking $(N,r)$ from ${\mathcal I}$).
\item
If $N<N_{1,k}$, then $B_k(N)>0$. Hence, $A_kr^2+B_k(N)r+C_k(N)>0$ for
any $r\ge 0$ since $A_k,B_k,C_k>0$. It thus follows that $D_{2,k}(N,r)>0$ for all $r > r_{\rm crit}(N)$.
\item
If $N>N_{2,k}$, then $B_k(N)<0$. Since $\Delta_k(N)>0$, the equation
$A_kr^2+B_k(N)r+C_k(N)=0$ admits the two real and positive roots:
\begin{eqnarray}\label{r1r2}
r_{1,k}(N)=\frac{-B_k(N)-\sqrt{\Delta_k(N)}}{2A_k},\qquad\;\,
r_{2,k}(N)=\frac{-B_k(N)+\sqrt{\Delta_k(N)}}{2A_k}.
\end{eqnarray}
Observe that
\begin{align}
~~~~r_{1,k}(N)=&\frac{2C_k}{-B_k+\sqrt{\Delta_k}}\notag\\
\sim &\frac{1}{\mi\mv^3}\Big (\mi\mt\mv^3+\alpha\gamma\mv^3 N+\alpha\gamma \mi\mv^2N
+\alpha\gamma \mi^2\mv N+\alpha^2\gamma^2\mv N^2\notag\\
&\;\qquad\quad+\alpha^2\gamma^2\mi N^2+\alpha^2\gamma^2d_VN^2\lambda_k+2\alpha\gamma \mv^2d_VN\lambda_k
+\mi\mv^3d_V\lambda_k\notag\\
&\;\qquad\quad
+\mi\mv^2d_V^2\lambda_k^2+\alpha\gamma \mv d_V^2N\lambda_k^2+\mi^2\mv^2d_V\lambda_k+2\alpha\gamma\mi \mv d_VN\lambda_k\Big )\notag\\
&>\mt>\frac {(\mt\mv-\alpha\gamma N)^+}{\mv}=r_{\rm crit}(N),
\label{asympt-r1}
\end{align}
as $T_{\max}\to +\infty$.
Consequently, the Hurwitz determinant $D_{2,k}(N,r)$ is positive for
$r_{\rm crit}(N) \le r <r_{1,k}(N)$ and $r>r_{2,k}(N)$, it vanishes at
$r=r_{1,k}(N)$ and $r=r_{2,k}(N)$, and is negative for $r_{1,k}(N) <
r < r_{2,k}(N)$.
\item
Assume $N\in\{N_{1,k},N_{2,k}\}$. In such a case, $\Delta_k(N_{j,k})=0$ and
the polynomial $A_kr^2+B_k(N_{j,k})r+C_k(N_{j,k})$ has the double root
$r_k(N_{j,k})=-B_k(N_{j,k})/2A_k$. However, this solution makes sense only if $B_k(N_{j,k})<0$.
Hence, only the case $N=N_{2,k}$ is relevant, and we have $D_{2,k}(N_{2,k},r)>0$ for $r > r_{\rm crit}(N_{2,k})$
except at $r(N_{2,k})=-B_k(N_{2,k})/2A_k$, where it vanishes.
\end{enumerate}

We are now in a position to define the subdomain ${\mathcal P}_k$ of
${\mathcal I}$ by
\begin{equation}
{\mathcal P}_k = \left\{(N,r): N \geq N_{2,k},\,   r_{1,k}(N)
\leq r \leq r_{2,k}(N)\right\},
\label{set-Pk}
\end{equation}
see \eqref{N1-N2} and \eqref{r1r2},
and at $N=N_{2,k}$, $r_{1,k}(N_{2,k})=r_{2,k}(N_{2,k})= -B_k(N_{2,k})/2A_k$.
In the domain ${\mathcal I}$, $d_{1,k}$ and $d_{3,k}$
are positive, and $D_{2,k}(N,r)$ is positive except in
${\mathcal P}_k$. More precisely,  the Hurwitz determinant $D_{2,k}(N,r)$ is negative in the interior of
${\mathcal P}_k$ and it vanishes on the boundary of ${\mathcal P}_k$.

\paragraph{\bf Case II: $\boldsymbol{\Lambda_2 <\lambda_k<\Lambda_0}$.}
Now, $a_k<0$,
$b_k<0, c_k>0$. Hence, $N_{2,k}<0, N_{1,k}>0$. Since $N_{0,k}>0$, it holds that
$N_{0,k}>N_{1,k}$ according to Remark \ref{position N0k}. Moreover, as it is immediately seen,
$N_{0,k}>(\gamma T_{\max})^{-1}\mv$. There are
two possibilities:
\begin{enumerate}[(i)]
\item
$(N,r)\in {\mathcal I}$ satisfies
$N<N_{0,k}$. Then, $B_k(N)>0$. Therefore,
$A_kr^2+B_k(N)r+C_k(N)>0$ for any $r\ge r_{\rm crit}(N)$ since the coefficients
are all positive. It thus follows that $D_{2,k}(N,r)>0$.
\item
$(N,r)\in {\mathcal I}$ satisfies $N \geq N_{0,k}$. Then, $N>N_{1,k}$ and
$a_kN^2+b_kN+c_k <0$.
Therefore $\Delta_k(N)<0$ and $A_kr^2 +B_k(N)r +C_k(N)$ has the sign of
$A_k$ which is positive, so $D_{2,k}(N,r)>0$.
\end{enumerate}

\paragraph{\bf Case III: $\boldsymbol{\lambda_k > \Lambda_0}$.}
Here, $N_{0,k}<0$ and, therefore,  $B_k(N)>0$ for all $N>0$.
The conclusion is the same as in Case II (i).

We summarize our results in the following proposition.

\begin{proposition}\label{hurwitz}
Denote by
$K_2$ the largest integer such that
$\lambda_{K_2} < \Lambda_2$.
Then,
\begin{enumerate}[\rm (i)]
\item
for $k=0, \ldots, K_2$, the Hurwitz determinant $D_{2,k}(N,r)$ is, respectively, negative in the interior
of the subdomain ${\mathcal P}_k$ of ${\mathcal I}$, positive in ${\mathcal I }\setminus {\mathcal P}_k$, and it
vanishes on the boundary of ${\mathcal P}_k$;
\item
for $k=K_{2}+1, K_{2}+2, \ldots$, the Hurwitz determinant $D_{2,k}(N,r)$  is always positive in ${\mathcal I}$.
\end{enumerate}
\end{proposition}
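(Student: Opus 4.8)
The plan is to assemble into a single statement the case-by-case discussion carried out in Subsection \ref{subsect-4.2}. First I would recall that, by the Routh--Hurwitz criterion, all the roots of $\lambda^3+d_{1,k}\lambda^2+d_{2,k}\lambda+d_{3,k}$ have negative real part if and only if $d_{1,k}>0$, $d_{3,k}>0$ and $D_{2,k}=d_{1,k}d_{2,k}-d_{3,k}>0$. Since $d_{1,k}>0$ trivially, and $d_{3,k}(N,r)>d_{3,0}(N,r)$, which is positive throughout $\mathcal I$ and vanishes only on the interface $N=N_{\rm crit}(r)$, the question reduces entirely to the sign of the single quantity $D_{2,k}(N,r)$. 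By \eqref{D2k} this sign coincides with that of the quadratic $A_kr^2+B_k(N)r+C_k(N)$ in the variable $r$, with $A_k>0$ and $C_k(N)>0$ for every admissible $k$ and $N$; hence the sign is governed by $B_k(N)$ and by the discriminant $\Delta_k(N)$, which is proportional to $a_kN^2+b_kN+c_k$.

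Next I would use the two standing hypotheses to fix the signs of the auxiliary coefficients: \eqref{hyp-Tmax} gives $b_k<0$ and ensures $\Lambda_2>0$, while \eqref{bio-hypoth-1} gives $c_k>0$ and $\delta_k=b_k^2-4a_kc_k>0$. The sign of $a_k$ flips exactly when $\lambda_k$ crosses $\Lambda_2$, and the root $N_{0,k}$ of $B_k$ changes sign exactly when $\lambda_k$ crosses $\Lambda_0$. This dictates the three cases already isolated: Case I ($0\le\lambda_k<\Lambda_2$), Case II ($\Lambda_2<\lambda_k<\Lambda_0$), Case III ($\lambda_k>\Lambda_0$). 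Since $\lambda_k\to+\infty$ as $k\to+\infty$ (with $\lambda_0=0<\Lambda_2$), only finitely many indices lie in Case I, and $K_2$ is by definition the largest such index; this is what produces the dichotomy (i)/(ii).

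For Case I I would run the four subcases according to the position of $N$ relative to $N_{1,k}<N_{0,k}<N_{2,k}$ (the ordering coming from Remark \ref{position N0k}): for $N<N_{1,k}$ one has $B_k(N)>0$, so the quadratic in $r$ is positive for all $r\ge 0$; for $N_{1,k}<N<N_{2,k}$ one has $\Delta_k(N)<0$, so it is again positive; for $N>N_{2,k}$ one has $B_k(N)<0$ and $\Delta_k(N)>0$, so the quadratic has two positive roots $r_{1,k}(N)<r_{2,k}(N)$ and $D_{2,k}$ is negative exactly on $r_{1,k}(N)<r<r_{2,k}(N)$; the boundary values $N\in\{N_{1,k},N_{2,k}\}$ are degenerate and only $N=N_{2,k}$ is relevant. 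Combining this with the asymptotic estimate \eqref{asympt-r1}, which shows $r_{1,k}(N)>r_{\rm crit}(N)$ for $T_{\max}$ large, and with $N_{2,k}>\mv(\gamma T_{\max})^{-1}$, places the whole region $\{N\ge N_{2,k},\ r_{1,k}(N)\le r\le r_{2,k}(N)\}$ inside $\mathcal I$; this region is precisely $\mathcal P_k$ of \eqref{set-Pk}, and the claimed sign pattern of $D_{2,k}$ follows. In Cases II and III the sign discussion shows $A_kr^2+B_k(N)r+C_k(N)>0$ on all of $\mathcal I$: in Case II one separates $N<N_{0,k}$ (where $B_k(N)>0$) from $N\ge N_{0,k}$ (where $\Delta_k(N)<0$), and in Case III one simply has $B_k(N)>0$ for every $N>0$.

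The argument is essentially bookkeeping. The only genuinely delicate points are the two a priori estimates that guarantee $\mathcal P_k\subset\mathcal I$ — namely $r_{1,k}(N)>r_{\rm crit}(N)$ and $N_{2,k}>\mv(\gamma T_{\max})^{-1}$ — and verifying that the positivity conclusions of Cases II and III are uniform over $\mathcal I$; all of these rest on $T_{\max}$ being large (hypothesis \eqref{hyp-Tmax}) and on $\mi>\mt$, so I would expect the care to be concentrated in the asymptotic expansions as $T_{\max}\to+\infty$.
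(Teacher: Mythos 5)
Your proposal is correct and follows essentially the same route as the paper: the paper's ``proof'' of Proposition \ref{hurwitz} is precisely the case-by-case sign analysis of $A_kr^2+B_k(N)r+C_k(N)$ carried out in Subsection \ref{subsect-4.2}, organized by the position of $\lambda_k$ relative to $\Lambda_2$ and $\Lambda_0$ and, within Case I, by the position of $N$ relative to $N_{1,k}$ and $N_{2,k}$. You also correctly isolate the only delicate points, namely the estimates $r_{1,k}(N)>r_{\rm crit}(N)$ and $N_{2,k}>\mv(\gamma T_{\max})^{-1}$ obtained from the asymptotics as $T_{\max}\to+\infty$, which are exactly the steps the paper treats with care.
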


\begin{remark}
To give an idea, with the numerical values of Table \ref{table1} and
$\ell=1$, $\Lambda_2 =1239.5$ and lies between
$\lambda_{97}=116\pi^2$ and $\lambda_{98}=128\pi^2$, therefore $K_2=97$.
\end{remark}

To conclude this subsection we prove the following proposition which gives a much clearer picture of how
the sets ${\mathcal P}_k$ are ordered in the space of the parameters.

\begin{proposition}\label{P inclusions} Let $K_2$ be as in the statement of Proposition \ref{hurwitz}.
Then, the following set inclusions hold:
\begin{eqnarray*}
{\mathcal P}_{K_2}\subseteq {\mathcal P}_{K_2-1}\subseteq\cdots \subseteq {\mathcal P}_{k}
\subseteq \cdots \subseteq {\mathcal P}_1\subsetneq {\mathcal P}_0.
\end{eqnarray*}
\end{proposition}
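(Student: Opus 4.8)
The plan is to establish the chain of inclusions by showing that each set ${\mathcal P}_k$ is contained in ${\mathcal P}_{k-1}$, i.e. that the monotonicity of the whole picture in $\lambda_k$ reduces to monotonicity in the single parameter $\lambda_k$, since the eigenvalues $\lambda_k$ form a non-decreasing sequence. Recall from \eqref{set-Pk} that ${\mathcal P}_k=\{(N,r):N\ge N_{2,k},\ r_{1,k}(N)\le r\le r_{2,k}(N)\}$. Thus it suffices to prove two things: (a) the left endpoint $N_{2,k}$ is non-decreasing in $\lambda_k$ (so the vertical strip in the $N$-direction shrinks), and (b) for every fixed admissible $N$ the interval $[r_{1,k}(N),r_{2,k}(N)]$ shrinks as $\lambda_k$ increases, i.e. $r_{1,k}(N)$ is non-decreasing and $r_{2,k}(N)$ is non-increasing in $\lambda_k$. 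Since ${\mathcal P}_k$ is the region where $D_{2,k}(N,r)\le 0$ inside ${\mathcal I}$ (by Case I of the preceding analysis and Proposition \ref{hurwitz}), the cleanest route is to work directly with the sign of $D_{2,k}$: I would show that for $(N,r)\in{\mathcal I}$ with $0\le\lambda_k<\lambda_{k-1}<\Lambda_2$, whenever $D_{2,k}(N,r)<0$ one also has $D_{2,k-1}(N,r)<0$. Equivalently, viewing $\lambda_k$ as a continuous variable $\mu\in[0,\Lambda_2)$ and writing $D_2(N,r;\mu)$ for the corresponding Hurwitz determinant (with the obvious substitution $d_V\lambda_k\rightsquigarrow d_V\mu$ in \eqref{d1k}--\eqref{d3k}), it is enough to prove that $D_2(N,r;\mu)<0$ implies $\partial_\mu D_2(N,r;\mu)>0$, so that the negativity region is nested decreasingly in $\mu$; the stated inclusions then follow by specializing $\mu=\lambda_k$ and using $\lambda_k\le\lambda_{k-1}$.

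Concretely I would proceed as follows. First I would record, from \eqref{D2k}, that $\gamma^2\mv^2N^2T_{\max}^2\,D_{2,k}=A_kr^2+B_k(N)r+C_k(N)$, and note that $A_k$ is strictly increasing in $\lambda_k$ while $B_k$ and $C_k$ are polynomials of degree $2$ in $\lambda_k$ with leading coefficients $\mv^3\gamma d_V^2 N^2T_{\max}$ (positive) and $\gamma^2T_{\max}^2 N^2\mi\mv^2 d_V^2$ (positive) respectively; in particular, on the region of interest $N<N_{2,k}$ both $B_k(N)$ and $C_k(N)$ are positive, but in ${\mathcal P}_k$ one has $N\ge N_{2,k}$ and there $B_k(N)<0$. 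Differentiating the quadratic in $r$ with respect to $\mu$ gives $\partial_\mu(\text{quadratic})=A'_\mu r^2+B'_\mu(N)\,r+C'_\mu(N)$. The claim $D_2<0\Rightarrow\partial_\mu D_2>0$ would follow if I can show this derivative-quadratic is positive precisely on the set where the original quadratic is negative. Since $A'_\mu>0$, the derivative-quadratic is positive outside the interval between its own two roots; so the key estimate is that the interval where $A_k r^2+B_k r+C_k<0$ (namely $r_{1,k}<r<r_{2,k}$) is contained in the region where $A'_\mu r^2+B'_\mu r+C'_\mu>0$. This is a comparison of two pairs of roots of quadratics in $r$, and — exactly as in Case I — it will come down to sign conditions on the discriminants, which by the computations of $\delta_k$, $\Delta_k$ and Remark \ref{position N0k} are all controlled under hypothesis \eqref{hyp-Tmax} and $\mi>\mt$. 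I would also separately verify the endpoint monotonicity $N_{2,k}\le N_{2,k-1}$ directly from \eqref{N1-N2}, differentiating $N_{2,k}=(-b_k+\sqrt{\delta_k})/(2a_k)$ in $\mu$; since $a_k>0$ and $a_k$ is decreasing, $b_k$ decreasing, and $\delta_k$, $c_k$ behave monotonically (all under \eqref{hyp-Tmax}), one gets the sign. The strict inclusion ${\mathcal P}_1\subsetneq{\mathcal P}_0$ is handled by exhibiting a point of ${\mathcal P}_0$ on its boundary curve $r=r_{2,0}(N)$ or $r=r_{1,0}(N)$ at which the corresponding $D_{2,0}$ vanishes but $D_{2,1}$ is strictly positive; this uses that $\lambda_0=0<\lambda_1$ strictly, so the monotonicity above is strict at $\mu=0$.

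I expect the main obstacle to be the root-comparison step: verifying that the zero-set interval $(r_{1,k},r_{2,k})$ of the quadratic $A_k r^2+B_k r+C_k$ sits inside the positivity set of the derivative-quadratic $A'_\mu r^2+B'_\mu r+C'_\mu$ uniformly in $N\ge N_{2,k}$ and for all admissible $\mu$. Because the coefficients are fairly involved polynomials in $N$, $\mu=\lambda_k$ and $T_{\max}$, the honest way to do this is to exploit the large-$T_{\max}$ asymptotics already used repeatedly above (as in \eqref{asympt-r1} and the estimates for $N_{2,k}$ and $a_k(\mv/\gamma T_{\max})^2+b_k\mv/\gamma T_{\max}+c_k$): one shows the desired sign to leading order in $T_{\max}$, where the dominant terms are manifestly of the right sign thanks to $\mi>\mt$, and then invokes \eqref{hyp-Tmax} (enlarging $T_{\max}^0$ if necessary) to absorb the lower-order corrections. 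An alternative, slightly softer route that avoids some of this bookkeeping is the following: since in ${\mathcal I}$ all three Routh--Hurwitz quantities $d_{1,k}$, $d_{3,k}$ are positive and only $D_{2,k}$ can change sign, a point $(N,r)$ lies in ${\mathcal P}_k$ iff $M_k$ has a pair of eigenvalues with non-negative real part, iff $M_k$ has a purely imaginary eigenvalue $i\omega$ for some $\omega\ge 0$ with the remaining eigenvalue negative, which by $d_{1,k}d_{2,k}=d_{3,k}$ happens exactly when $\omega^2=d_{3,k}/d_{1,k}$ and $D_{2,k}\le 0$; one can then try to track how the pair of complex eigenvalues of $M_k$ moves as $\lambda_k$ increases (the $(3,3)$-entry $-d_V\lambda_k-\mv$ decreases), using a perturbation/continuity argument to show the real part of that pair decreases. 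I would attempt the direct computational route first, keeping the eigenvalue-tracking argument in reserve in case the polynomial inequalities become unwieldy.
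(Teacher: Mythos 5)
Your strategy is the same as the paper's: show that $N_{2,k}$ increases with $k$ and that, for fixed $N$, the interval $[r_{1,k}(N),r_{2,k}(N)]$ shrinks as $\lambda_k$ grows. But the step you flag as the main obstacle is not one, and this is worth spelling out because the machinery you hold in reserve for it (discriminant comparisons, large-$T_{\max}$ asymptotics, eigenvalue tracking) is all unnecessary. Every monomial in $\lambda_k$ appearing in $A_k$, $B_k(N)$ and $C_k(N)$ has a positive coefficient, so each of these three quantities is non-decreasing in $\lambda_k$; consequently $A_kr^2+B_k(N)r+C_k(N)\le A_{k+1}r^2+B_{k+1}(N)r+C_{k+1}(N)$ pointwise for every $r\ge 0$, and the negativity set of the right-hand side is contained in that of the left-hand side. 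In your continuous formulation this says that $\partial_\mu D_2(N,r;\mu)=A'_\mu r^2+B'_\mu(N)r+C'_\mu(N)>0$ for \emph{all} $r\ge 0$, not merely on the negativity set of $D_2$: all three derivative coefficients are positive, so there is no root comparison to perform. This pointwise monotonicity is exactly the paper's argument (it records $0\le A_j\le A_{j+1}$, $B_j(N)\le B_{j+1}(N)\le 0$ and $0\le C_j\le C_{j+1}$) and it already yields the nesting of the $r$-intervals; hypotheses \eqref{hyp-Tmax} and $\mi>\mt$ enter only through the facts imported from Case I and through the monotonicity of $N_{2,k}$, which the paper obtains by checking that $a_k$ and $b_k$ are decreasing and $\delta_k$ is increasing as functions of $\lambda_k$. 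Two slips to fix: the endpoint monotonicity you propose to verify is written as $N_{2,k}\le N_{2,k-1}$, which is the wrong direction (the constraint $N\ge N_{2,k}$ must become more restrictive as $k$ grows, i.e. $N_{2,k-1}\le N_{2,k}$, consistent with your own item (a)); and $B_k(N)$ is not positive on all of $\{N<N_{2,k}\}$ but only for $N<N_{0,k}$, by Remark \ref{position N0k}. With these corrections your plan closes, and the strict inclusion ${\mathcal P}_1\subsetneq{\mathcal P}_0$ follows, as you and the paper both observe, from $\lambda_0<\lambda_1$ forcing $N_{2,0}<N_{2,1}$.
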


\begin{proof}
To begin with we claim that $N_{2,k}<N_{2,k+1}$ (see \eqref{N1-N2}) for any $k=0,\ldots,K_2-1$.
To prove the claim we observe that $\delta_k=\delta(\lambda_k)$, where
\begin{align*}
\delta(x)=&16 {\gamma}^{2}\mi\mv ( \mi+\mv+d_Vx)\\
&\;\;\times
 \Big\{\alpha d_V^4T_{\max}x^4
+4\alpha\mv d_V^3T_{\max}x^3+(\mi^2\mv T_{\max}+6\alpha\mv^2-2\alpha\mi^2+4\alpha\mi\mv)d_V^2T_{\max}x^2\\
&\quad\quad +
\mv \big [\mi^2(\mi +\mv) T_{\max}^2+4\alpha(\mi^2+2\mi\mv+\mv^2-\mi\mt)T_{\max}
+4\alpha^2\mi \big ]d_Vx\\
&\quad\quad
+\mi^{2}\mt\mv^{2} T_{\max}^{2}
+\alpha[\mi^4+4 \mi\mv^{3}+\mv^{4}
+4\mi^2\mv(\mi-\mt)+\mi\mv^2(5\mi-4\mt)]T_{\max}\\
&\quad\quad+4\alpha^2\mi^2\mv+4\alpha^2\mi\mv^2\Big\}.
\end{align*}
We compute the derivative of the function $\delta$ and get
\begin{align*}
\delta'(x)
=16\gamma^2\mi\mv\Big\{&5\alpha d_V^4T_{\max}x^4+4\alpha(\mi+5\mv)d_V^3T_{\max}x^3\\
&\,+3(\mi^2\mv T_{\max}+8\alpha\mi\mv-2\alpha\mi^2+10\alpha\mv^2)d_V^2T_{\max}x^2\\
&\,+4\big [\mi^2\mv(\mi+\mv)T_{\max}^2\\
&\;\;\;\;\;\;\;\;+\alpha(9\mi\mv^2+3\mi^2\mv-2\mi\mt\mv-\mi^3+5\mv^3)T_{\max}+2\alpha^2\mi\mv]d_Vx\\
&\,+\mi^2\mv(\mt\mv+ \mv^2+\mi^2+2\mi\mv)T_{\max}^2\\
&\,+\alpha(17\mi^2\mv^2-8\mi\mt\mv^2+8\mi^3\mv
+16\mi\mv^3+5\mv^4+\mi^4-8\mi^2\mt\mv)T_{\max}\\
&\,+8\alpha^2\mi^2\mv+8\alpha^2\mi\mv^2\Big\}d_V.
\end{align*}
Under hypothesis \eqref{hyp-Tmax} this function is positive and, consequently,
$k\mapsto \delta_k$ is non-decreasing.

Similarly, $a_k=a(\lambda_k)$ and $b_k=b(\lambda_k)$, the functions $a$ and $b$ being strictly decreasing.
Hence, the sequences $\{a_k\}$ and $\{b_k\}$ are non-increasing.
Since $a_k>0$ and $b_k<0$ we now easily get the claim.

To complete the proof of the inclusion ${\mathcal P}_{k+1}\subseteq {\mathcal P}_k$ for any
$k=0,\ldots,K_2-1$, we show that, for any $N\geq N_{2,k+1}$ we have
$r_{1,k}(N) \leq r_{1,k+1}(N) < r_{2,k+1}(N) \leq r_{2,k}(N)$.
These properties follow immediately from the definitions of $r_{1,k}(N)$ and $r_{2,k}(N)$ observing that
$0\le A_j\le A_{j+1}$, $B_j(N)\le B_{j+1}(N)\le 0$ (since $N\ge N_{0,h}$ for any $h=0,\ldots,k+1$;
recall that we are in the Case I(iii) where
$N_{1,k}$ and $N_{2,k}$ are both positive, and take Remark \ref{position N0k} into account)
and  $0\le C_j\le C_{j+1}$  for any $j=0,\ldots,K_2-1$.

Finally, since $\lambda_0<\lambda_1$, $N_{2,1}>N_{2,0}$. Consequently, ${\mathcal P}_1$
is properly contained in
${\mathcal P}_0$.
\end{proof}

\begin{figure}[ht]
\begin{center}
\begin{overpic}[width=10cm,height=7cm]{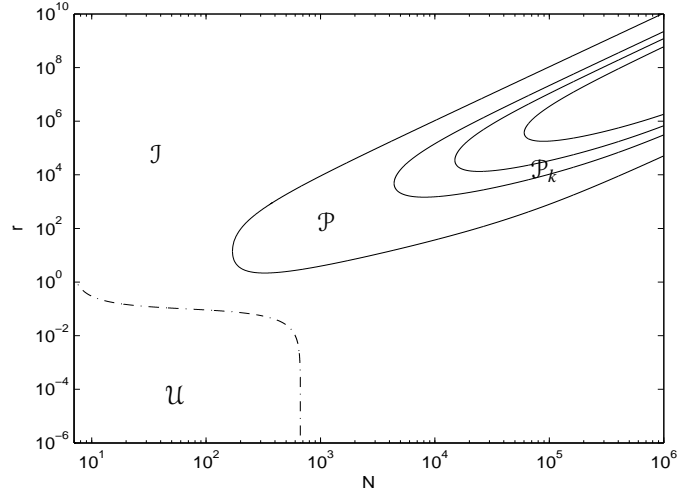}
\put(25,13){$\mathcal{U}$}
\put(23,45){$\mathcal{I}$}
\put(73,43){$\mathcal{P}_k$}
\put(45,36){$\mathcal{P}$}
\end{overpic}
\end{center}
\vskip -.4truecm
\caption{The sets ${\mathcal P}_k$.}
\label{graphPk}
\end{figure}

\subsection{Proof of Theorem \ref{stab-infected}}

The proof follows from Propositions \ref{hurwitz}, \ref{P inclusions}, Routh-Hurwitz criterion and the linearized
stability principle.

(i) As Proposition \ref{hurwitz} shows, for $k= 0, \ldots, K_2$ the leading Hurwitz
determinants $D_{2,k}(N,r)$ (see \eqref{D2k}) are positive in $\mathcal I \setminus {\mathcal P}_k$.
On the other hand, if $k \geq K_2+1$, then $D_{2,k}(N,r)>0$ for any $(N,r)\in {\mathcal I}$.

By Proposition \ref{P inclusions} it holds that ${\mathcal P}_k \subsetneq {\mathcal P}_0$, $k=1,2, \ldots$.
Hence, we conclude that $D_{2,k}(N,r)>0$ for any $k\in\N$, if $(N,r)\in {\mathcal I}\setminus {\mathcal P}_0$.
Since the other two Hurwitz determinants are positive in the whole of ${\mathcal I}$,
it follows from the Ruth-Hurwitz criterion that, if $(N,r) \in {\mathcal I} \setminus {\mathcal P}$,
then all the element of $\bigcup_{k\in\N}\sigma_k$ have negative real part. Hence, ${\rm Re}\,\sigma (\boldsymbol{L}_i) <0$
(see \eqref{spectrum-k}). It remains to invoke the linearized stability principle
as in the proof of Theorem \ref{stab-uninf}.

(ii) The instability of ${\bf X}_u$ can be deduced from \cite{FB10} which deals with
System \eqref{pde1}-\eqref{pde3} in the case when $d_V=0$ and shows that, in this situation, the infected
equilibrium ${\mathbf X}_i$ is unstable. \qed

\section{Hopf bifurcation and instability}\label{hopf-instability}
\setcounter{equation}{0}
For fixed $N>0$ we take the logistic parameter
$r>r_{\rm crit}(N)$ as a bifurcation parameter.

We recall that at fixed $(N,r)\in\mathcal I$, System \eqref{pde1}-\eqref{pde3}
has two equilibria: the uninfected
trivial solution ${\mathbf X}_u(N,r)$ and the infected, positive solution
${\mathbf X}_i(N,r)$. At ${\mathbf X}_i(N,r)$, the Jacobian matrix is
$\boldsymbol{{\mathcal L}}_i=\boldsymbol{{\mathcal L}}_{i,N,r}$, see \eqref{jacobian-inf}.
As we already observed in Proposition \ref{spectrum Li}, the realization $\boldsymbol{L}_{i,N,r}$ of
the operator $\boldsymbol{{\mathcal L}}_{i,N,r}$ in $(L^2_{\C})^3$ with domain
$D(\boldsymbol{L}_{i,N,r})=L^2_{\C}\times L^2_{\C}\times H^2_{\sharp,\C}$ generates an analytic
strongly continuous semigroup that we denote by $e^{t\boldsymbol{L}_{i,N,r}}$.

In this section we are interested in proving that Hopf bifurcation
occurs on the boundary of the set ${\mathcal P}$ (i.e.,
at the points $(N,r_{1,0}(N))$ and $(N,r_{2,0}(N))$ with $N\ge N_{2,0}$, where
$r_{1,0}(N)$, $r_{2,0}(N)$ and $N_{2,0}$ are given by \eqref{N1-N2} and \eqref{r1r2}) and in
analyzing the stability of the bifurcated periodic solutions.

Note, that for $T_{\max}$ large,
\begin{eqnarray*}
N_{2,0} = \frac{\mi^2+3\mi\mv+\mv^2+2\sqrt{\mi\mt\mv(\mi+\mv)}}{\gamma\mi }\,\frac{1}{T_{\max}}
 + o(T_{\max}^{-1}).
\end{eqnarray*}
Hence, $N_{2,0}$ is positive if $T_{\max}$ is large enough, let us say, if $T>T_{\max}^1(\mt,\mi,\mv,\alpha,\gamma)>T_{\max}^0(\mt,\mi,\mv,\alpha,\gamma)$ (see \eqref{hyp-Tmax}).
We assume hereafter that
\begin{eqnarray}\label{hyp-Tmax1}
T_{\max} \geq T_{\max}^1(\alpha,\gamma,\mi,\mt,\mv,N).
\end{eqnarray}

Here, differently from the previous sections, to avoid confusion we stress explicitly the
dependence of the operators, numbers and sets that we consider on $r$. We do not
stress the dependence on $N$ since in the following discussions only the parameter $r$ varies,
$N$ is (arbitrarily) fixed. In particular, we simply write $r_1$ and $r_2$ instead of $r_1(N)$ and $r_2(N)$.

\begin{theorem}\label{hopf2}
Let $N\ge N_{2,0}$ be fixed. Under the hypothesis \eqref{hyp-Tmax1}, Hopf bifurcation occurs at the
critical points $r=r_j$, $j=1,2$. More precisely,
\begin{enumerate}[\rm (i)]
\item
for any $\beta\in (0,1)$, there exist $c_0>0$ and smooth
functions $\hat r_j,\rho_j:(-\varepsilon_0,\varepsilon_0)\to\R$ and ${\bf X}_{\sharp,j}: (-\varepsilon_0,\varepsilon_0)\to
C^{1+\beta}(\R,L^2_{\sharp}\times L^2_{\sharp}\times L^2_{\sharp})\cap C^{\beta}(\R,L^2_{\sharp}\times L^2_{\sharp}\times H^2_{\sharp})$
$(j=1,2)$ such that $\rho_j(0)=1$, $\hat r_j(0)=r_j$, ${\bf X}_{\sharp,j}(0)={\bf X}_i(r_j)$, ${\bf X}_{\sharp,j}(c)$ is not constant in time if $c\neq 0$ and its period is $2\pi\rho_j(x)/\omega_j$, where
\begin{eqnarray*}
\omega=
\sqrt{\alpha\gamma N+\frac{\alpha\gamma\mi N}{\mv}
+\frac{\mv r_j}{\gamma NT_{\max}}(\mi+\mv)},\qquad\;\,j=1,2.
\end{eqnarray*}
\item
There exists $\delta>0$ such that, if ${\bf X}\in
C^{1+\beta}(\R,L^2_{\sharp}\times L^2_{\sharp}\times L^2_{\sharp})\cap C^{\beta}(\R,L^2_{\sharp}\times L^2_{\sharp}\times H^2_{\sharp})$
is a periodic solution to System \eqref{pde1}-\eqref{pde3} $($where $r$ is replaced by $\overline r)$
with period $2\pi\overline\rho/\omega$, such that
\begin{eqnarray*}
\;\;\;\;\;\;\;\;\;\;\|{\bf X}-{\bf X}_i(r_j)\|_{C^{\beta}(\R,L^2_{\sharp}\times L^2_{\sharp}\times H^2_{\sharp})}+
\|{\bf X}-{\bf X}_i(r_j)\|_{C^{1+\beta}(\R,L^2_{\sharp}\times L^2_{\sharp}\times L^2_{\sharp})}
+|\overline r-r_j|+|1-\overline\rho|\le\delta_0,
\end{eqnarray*}
for $j=0$ or $j=1$, then there exist $\varepsilon\in (-\varepsilon_0,\varepsilon_0)$ and
$t_0\in\R$ such that $\overline{r}=\hat r_j$ and ${\bf X}={\bf X}_{\sharp,j}(\varepsilon)$.
\end{enumerate}
\end{theorem}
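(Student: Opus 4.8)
The plan is to obtain Theorem~\ref{hopf2} as an application of the Hopf bifurcation theorem for fully nonlinear parabolic problems governed by analytic semigroups, the non-compactness of the resolvent of $\boldsymbol{L}_{i,r}$ being compensated by the analyticity established in Proposition~\ref{spectrum Li}. First I would pin down the eigenvalues that cross the imaginary axis. By Proposition~\ref{spectrum Li} the spectrum $\sigma(\boldsymbol{L}_{i,r})$ consists of two negative real numbers together with $\bigcup_{k\ge 0}\sigma_k$, where $\sigma_k$ is the spectrum of $M_k$, with characteristic polynomial ${\mathcal D}_k(\lambda)=\lambda^3+d_{1,k}\lambda^2+d_{2,k}\lambda+d_{3,k}$. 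Since $r_1=r_{1,0}(N)$ and $r_2=r_{2,0}(N)$ are, by definition (see \eqref{D2k}--\eqref{r1r2}), the two zeros of $D_{2,0}(N,\cdot)=d_{1,0}d_{2,0}-d_{3,0}$, at $r=r_j$ the polynomial factors as ${\mathcal D}_0(\lambda)=(\lambda+d_{1,0})(\lambda^2+d_{2,0})$, so that $\sigma_0=\{-d_{1,0}(N,r_j)\}\cup\{\pm i\omega_j\}$ with $\omega_j=\sqrt{d_{2,0}(N,r_j)}$; a glance at \eqref{d2k} with $k=0$ and $\lambda_0=0$ shows that this $\omega_j$ is exactly the $\omega$ of the statement. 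Since $d_{1,0}>0$, and since for $k\ge 1$ the points $(N,r_j)$ lie on $\partial{\mathcal P}_0$, hence outside ${\mathcal P}_k\subseteq{\mathcal P}_1$ (Propositions~\ref{hurwitz} and~\ref{P inclusions}), so that $d_{1,k}$, $d_{3,k}$ and $D_{2,k}(N,r_j)$ are positive and $\sigma_k\subset\{\re\lambda<0\}$ by the Routh-Hurwitz criterion, I conclude that $\pm i\omega_j$ are simple, isolated eigenvalues of $\boldsymbol{L}_{i,r_j}$, with space-independent eigenfunctions (those built from the eigenvectors of $M_0$), and that $\sigma(\boldsymbol{L}_{i,r_j})\setminus\{\pm i\omega_j\}\subset\{\re\lambda\le-\eta_0\}$ for some $\eta_0>0$. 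Transversality is then immediate: for $N>N_{2,0}$ the discriminant $\Delta_0(N)$ is positive, so $r_1<r_2$ are simple zeros of $D_{2,0}(N,\cdot)$ --- up to the positive factor $(\gamma^2\mv^2N^2T_{\max}^2)^{-1}$ this function is the quadratic $A_0r^2+B_0(N)r+C_0(N)$ --- whence $\frac{d}{dr}D_{2,0}(N,r)\big|_{r=r_j}\ne 0$; letting $\nu(r)$ denote the branch of eigenvalues of $M_0(r)$ with $\nu(r_j)=i\omega_j$ (smooth near $r_j$ by simplicity), implicit differentiation of ${\mathcal D}_0(\nu(r),r)=0$ and separation into real and imaginary parts gives the standard identity $\frac{d}{dr}\re\nu(r)\big|_{r=r_j}=-\frac{1}{2(d_{1,0}(N,r_j)^2+d_{2,0}(N,r_j))}\,\frac{d}{dr}D_{2,0}(N,r)\big|_{r=r_j}\ne 0$.

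With these linear facts in hand I would invoke the abstract bifurcation theorem. Writing System~\eqref{pde1}--\eqref{pde3} as $\dot{\bf X}=\boldsymbol{L}_{i,r}({\bf X}-{\bf X}_i(r))+\boldsymbol{G}({\bf X},r)$, the remainder $\boldsymbol{G}$ collects only the quadratic terms (those originating from $-rT^2/T_{\max}$ and from $\mp\gamma VT$), hence is a smooth --- indeed polynomial --- superposition operator, jointly smooth in $({\bf X},r)$, while $\boldsymbol{L}_{i,r}$ generates an analytic semigroup. Denoting by ${\mathcal X}_c$ the two-dimensional eigenspace of the pair $\pm i\omega_j$ and by $P$ the associated bounded spectral projection, the part of $\boldsymbol{L}_{i,r_j}$ acting on $(I-P)(L^2_\C)^3$ has spectrum in $\{\re\lambda\le-\eta_0\}$. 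The construction of a two-dimensional local center manifold and the reduction of the flow on it to a planar vector field undergoing a nondegenerate Hopf bifurcation then proceed along the lines of \cite[Ch.~9]{lunardi1995analytic} (see also \cite{hassard1981theory}, \cite{marsden1976hopf}); the regularity ${\bf X}_{\sharp,j}\in C^{1+\beta}(\R,L^2_{\sharp}\times L^2_{\sharp}\times L^2_{\sharp})\cap C^{\beta}(\R,L^2_{\sharp}\times L^2_{\sharp}\times H^2_{\sharp})$ asserted in (i) reflects the maximal parabolic (H\"older) regularity of $e^{t\boldsymbol{L}_{i,r}}$ acting on a $C^{\beta}$-in-time forcing, and the period $2\pi\rho_j(\varepsilon)/\omega_j$ is read off from the reduced planar system. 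This yields the branch $(\hat r_j,\rho_j,{\bf X}_{\sharp,j})$ of part (i).

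It remains to locate the branch in the space-independent sector and to prove the local uniqueness of part (ii). The closed subspace ${\mathcal Z}$ of space-independent functions is invariant under the full nonlinear semiflow --- on constants $\Delta V=0$ and the system collapses to the three-dimensional ODE model of \cite{FB10} --- and ${\mathcal X}_c\subset{\mathcal Z}_\C$, so the local center manifold can be taken inside ${\mathcal Z}$; hence the reduced dynamics coincides with that of the ODE system and the bifurcating periodic orbits do not depend on $(x,y)$. Finally, a periodic solution lying, in the norms indicated, in a sufficiently small neighborhood of $({\bf X}_i(r_j),r_j)$ and with period close to $2\pi/\omega_j$ stays near the equilibrium for all times, hence is contained in the local center manifold; the uniqueness, up to a time translation, of small-amplitude periodic orbits for the reduced planar Hopf normal form then identifies it with ${\bf X}_{\sharp,j}(\varepsilon)$ for a unique small $\varepsilon$ and forces $\overline r=\hat r_j(\varepsilon)$.

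The main obstacle is the one already flagged in the Introduction. Because the resolvent of $\boldsymbol{L}_{i,r}$ is not compact --- the $T$- and $I$-equations carry no smoothing --- the classical Hopf/center-manifold machinery cannot be quoted verbatim: one has to work with the finite-rank spectral projection $P$ and rely on the analyticity of $e^{t\boldsymbol{L}_{i,r}}$, together with maximal parabolic regularity, to carry out the fixed-point construction of the (finite-dimensional) center manifold in the appropriate H\"older-in-time spaces, and to check the smoothness of $\boldsymbol{G}$ between those spaces uniformly for $r$ close to $r_j$. By contrast, the spectral location and the transversality condition are essentially immediate consequences of the Routh-Hurwitz computations already performed in Section~\ref{stability infected}.
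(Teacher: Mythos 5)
Your overall strategy coincides with the paper's: verify that $\sigma(\boldsymbol{L}_{i,r_j})$ consists of a pair of simple, purely imaginary conjugate eigenvalues plus a part uniformly bounded away from the imaginary axis, check transversality, and invoke the Hopf theorem for analytic semigroups (\cite[Thm.~9.3.3]{lunardi1995analytic}). Two of your ingredients are actually cleaner than the paper's: you factorize ${\mathcal D}_0(\lambda)=(\lambda+d_{1,0})(\lambda^2+d_{2,0})$ directly from $d_{1,0}d_{2,0}=d_{3,0}$, which yields $\pm i\omega_j$ with $\omega_j=\sqrt{d_{2,0}(N,r_j)}$ at once (the paper instead uses Orlando's formula and only identifies $\omega_j$ in its Step~4); and you obtain transversality from the fact that $r_j$ is a simple zero of the quadratic $r\mapsto D_{2,0}(N,r)$ when $\Delta_0(N)>0$, whereas the paper computes $d_{3,0}'-d_{1,0}'d_{2,0}-d_{1,0}d_{2,0}'$ explicitly and determines its sign asymptotically in $T_{\max}$; your identity $\frac{d}{dr}\,\re\,\nu(r)\big|_{r=r_j}=-D_{2,0}'(N,r_j)/(2(d_{1,0}^2+d_{2,0}))$ is exactly the paper's formula. (Note only that your argument requires $N>N_{2,0}$ strictly; at $N=N_{2,0}$ the two roots collide.)

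However, two statements you merely assert are precisely where the infinite-dimensionality bites, and they need proof. First, the uniform spectral gap $\sigma(\boldsymbol{L}_{i,r_j})\setminus\{\pm i\omega_j\}\subset\{\re\,\lambda\le-\eta_0\}$ does not follow from $\sigma_k\subset\{\re\,\lambda<0\}$ for each of the infinitely many $k$; one must rule out eigenvalues accumulating on the imaginary axis as $k\to\infty$. The paper's Step~2 does this by shifting $\lambda\mapsto\lambda+M$ and applying Routh--Hurwitz to the shifted cubic, whose coefficients are controlled as $k\to\infty$ because the $d_{j,k}$ grow linearly in $\lambda_k$. Second, the simplicity of $\pm i\omega_j$ as eigenvalues of $\boldsymbol{L}_{i,r_j}$ (a rank-one spectral projection) is not the same as their simplicity as roots of ${\mathcal D}_{0,r_j}$: one must show that the resolvent has a first-order pole there, i.e., that the contribution of the modes $k\ge1$ to $R(\lambda,\boldsymbol{L}_{i,r_j})$ stays bounded in operator norm for $\lambda$ near $i\omega_j$; the paper's Step~3 proves the lower bound $|{\mathcal D}_{k,r_j}(\lambda)|\ge\chi\lambda_k$ on a neighborhood of $i\omega_j$ for exactly this purpose. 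You correctly flag the non-compactness of the resolvent as the main obstacle, but you locate it only in the center-manifold construction; in fact it already enters at the level of the spectral hypotheses of the abstract theorem, and these two estimates are the substance of the paper's proof.
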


\begin{proof}
We limit ourselves to considering the case when $r=r_1$, the case $r=r_2$ being completely similar.

For $r$ in some neighborhood of $r_1$, we set ${\mathbf u}={\mathbf X}-{\mathbf X}_i(r)$, $s=r-r_1$ and
write System \eqref{pde1}-\eqref{pde3} at the infected equilibrium as
\begin{equation}
\frac{d{\mathbf u}}{dt}=\boldsymbol{{\mathcal F}}({\mathbf u},s),
\label{eq-tilde-X}
\end{equation}
where
\begin{align*}
\boldsymbol{\mathcal F}_1({\bf u},s)=&
-\left (\frac{\mv (s+r_1)}{\gamma NT_{\max}}+\frac{\alpha\gamma N}{\mv}\right )u_1-\frac{\mv}{N}u_3
-\frac{r_1+s}{T_{\max}}u_1^2-\gamma u_1u_3;\\
\boldsymbol{\mathcal F}_2({\bf u},s)=&
\left [\frac{\alpha\gamma N}{\mv}-\mt+(s+r_1)\left (1-\frac{\mv}{\gamma N T_{\max}}\right )\right ]u_1
-\mi u_2+\frac{\mv}{N}u_3+\gamma u_1u_3;\\
\boldsymbol{\mathcal F}_3({\bf u},s)=&
\mi Nu_2+d_V\Delta u_3-\mv u_3
\end{align*}
Clearly, by the Sobolev embedding theorem, $\boldsymbol{\mathcal F}$ is a smooth function defined in $L^2\times L^2\times H^2_{\sharp}$.

Note that the derivative $\boldsymbol{\mathcal F}_{\bf u}({\bf 0},0)$ is the operator
$\boldsymbol{L}_{i,r_1}$ in Proposition \ref{spectrum Li}. More precisely,
\begin{align*}
\boldsymbol{L}_{i,r_1}=
\begin{pmatrix}
 -\left (\frac{\mv r_1}{\gamma NT_{\max}}+\frac{\alpha\gamma N}{\mv}\right )Id & 0 & -\frac{\mv}{N}Id \\[2mm]
 \left [\frac{\alpha\gamma N}{\mv}-\mt+r_1\left (1-\frac{\mv}{\gamma N T_{\max}}\right )\right ]Id & -\mi Id  & \frac{\mv}{N}Id \\[2mm]
    0 & N \mi Id  & d_V\Delta-\mv Id
\end{pmatrix}.
\end{align*}

By Proposition \ref{spectrum Li}, operator $\boldsymbol{L}_{i,r_1}$ is the generator of a strongly continuous analytic semigroup in $(L^2_{\C})^3$.

Let us prove that
$\sigma(\boldsymbol{L}_{i,r_1})$ consists of eigenvalues with negative real part and a pair of purely imaginary and conjugate eigenvalues
$\lambda_1(r_1)$ and $\lambda_2(r_1)$, which are simple eigenvalues and satisfy the transversality condition. Once checked, these properties will
yield the assertion in view of
\cite[Thm. 9.3.3]{lunardi1995analytic} (which deals with fully nonlinear problems but, of course, it applies also to
the semilinear case).

Being rather long, we split the proof into four steps.

{\em Step 1.} Here, we prove that $\sigma(\boldsymbol{L}_{i,r_1})$ consists of eigenvalues with negative real part and
a pair of purely imaginary and conjugate eigenvalues.
For this purpose, we observe that, since ${\mathcal P}_k$ is properly contained in ${\mathcal P}_0$
for any $k=1,\ldots,K_2$ (see Proposition \ref{P inclusions}), the pair $(N,r_1(N))$
belongs to ${\mathcal I}\setminus {\mathcal P}_k$ for any $k=1,\ldots,K_2$. Therefore, from the results in
Subsection \ref{subsect-4.2} and Proposition \ref{spectrum Li}, it follows that $\sigma_{k,r_1}$ is contained in the halfplane
$\{\lambda\in\C: \re\,\lambda<0\}$.

As far as $\sigma_0$ is concerned, Orlando formula (see e.g., \cite[Chpt. XV]{gantmakher}) shows that
the Hurwitz determinant $D_{2,0}(r)$ (see \eqref{D2k}) factorizes as follows:
\begin{eqnarray*}
D_{2,0}(r)=-(\lambda_1(r)+\lambda_2(r))(\lambda_2(r)+\lambda_3(r))(\lambda_1(r)+\lambda_3(r)),
\end{eqnarray*}
where $\lambda_1(r)$, $\lambda_2(r)$ and $\lambda_3(r)$ are the roots of the polynomial
\begin{eqnarray*}
{\mathcal D}_{0,r}(\lambda)=\lambda^3+d_{1,0}(r)\lambda^2+d_{2,0}(r)\lambda+d_{3,0}(r)
\end{eqnarray*}
(see \eqref{Dklambda}) (i.e. the elements of $\sigma_0$). The point $(r_1,N)$ lies on the boundary of
${\mathcal P}_0$. Hence, $D_{2,0}(r_1)$ vanishes, i.e.,
\begin{equation}
(\lambda_1(r_1)+\lambda_2(r_1))(\lambda_2(r_1)+\lambda_3(r_1))(\lambda_1(r_1)+\lambda_3(r_1))=0.
\label{orlando}
\end{equation}
Since the coefficients of ${\mathcal D}_{0,r_1}$ are real and positive, at least one of the three roots $\lambda_1(r_1)$, $\lambda_2(r_1)$, $\lambda_3(r_1)$ (let us say $\lambda_3(r_1)$)
is real and negative and the other two roots are either both negative or they are complex and conjugate. From \eqref{orlando} it follows
that $\lambda_1(r_1)$ and $\lambda_2(r_1)$ are purely imaginary and conjugate.

{\em Step 2.} Let us prove that there exists a gap between $\sigma(\boldsymbol{L}_{i,r_1})\setminus\{\lambda_1(r),\lambda_2(r)\}$ and the
imaginary axis.
We have to consider the set $\sigma_k=\sigma_{k,r_1}$ ($k=1,2,\ldots$) which consists of the roots of the third-order polynomial
${\mathcal D}_{k,r_1}$ (see \eqref{Dklambda}).
Indeed, as we have already remarked, $\lambda_3(r_1)$ is negative.

Write $\lambda=\mu-M$.
If $\tilde\lambda$ is a root of the polynomial ${\mathcal D}_{k,r_1}$, then $\tilde \mu=\tilde\lambda+M$
is a root of the polynomial
$p_{k,r_1}(\lambda)= \lambda^3 + \tilde d_{1,k}(r_1) \lambda^2 + \tilde d_{2,k}(r_1) \lambda + \tilde d_{3,k}(r_1)$, where
\begin{align*}
&\tilde d_{1,k}(r_1)=d_{1,k}(r_1)-3M,\\
&\tilde d_{2,k}(r_1)=d_{2,k}(r_1)-2Md_{1,k}(r_1)+3M^2,\\
&\tilde d_{3,k}(r_1)=d_{3,k}(r_1)-Md_{2,k}(r_1)+M^2 d_{1,k}(r_1)-M^3.
\end{align*}
As it is easily seen
\begin{align*}
&\tilde d_{1,k}(r_1)=d_V\lambda_k+o(\lambda_k),\\[1mm]
&\tilde d_{3,k}(r_1)=\left [\frac{\alpha\gamma\mi N}{\mv}+\frac{\mi\mv r_1}{\gamma NT_{\max}}-\left (\mi +\frac{\alpha\gamma N}{\mv}
+\frac{\mv r_1}{\gamma N T_{\max}}\right ) M
+M^2\right ]d_V\lambda_k+o(\lambda_k),\\[2mm]
&\tilde d_{1,k}(r_1)\tilde d_{2,k}(r_1)-\tilde d_{3,k}(r_1)=d_V^2\left (\mi +\frac{\alpha\gamma N}{\mv}+\frac{\mv r_1}{\gamma NT_{\max}}-2M\right )\lambda_k^2+o(\lambda_k^2),
\end{align*}
as $k\to +\infty$.
Hence, if we take $M$ satisfying the inequalities
\begin{align*}
\left\{
\begin{array}{l}
\displaystyle\frac{\alpha\gamma\mi N}{\mv}+\frac{\mi\mv r_1}{\gamma NT_{\max}}-\left (\mi+\frac{\alpha\gamma N}{\mv}
+\frac{\mv r_1}{\gamma N T_{\max}}\right ) M+M^2>0,\\[4mm]
\displaystyle\mi +\frac{\alpha\gamma N}{\mv}+\frac{\mv r_1}{\gamma NT_{\max}}-2M>0,
\end{array}
\right.
\end{align*}
then, for $k$ sufficiently large (say $k\ge K_3>K_2$), $\tilde d_{1,k}(r_1)$, $\tilde d_{3,k}(r_1)$
and $\tilde d_{1,k}(r_1)d_{2,k}(r_1)-\tilde d_{3,k}(r_1)$ are all positive.
Hence, Routh-Hurwitz criterion applies and shows that the roots of $p_{k,r_1}$ have negative
real part of any $k\ge K_3$. As a byproduct, $\bigcup_{k\ge K_3}\sigma_{k,r_1}\subset\{\lambda\in\C: \re\,\lambda<-M\}$.

Since $\bigcup_{1\le k<K_3}\sigma_{k,r_1}$ consists of finitely many eigenvalues with negative real part, up
to replacing $M$ with a smaller constant if needed, we can assume that
$\bigcup_{1\le k}\sigma_{k,r_1}\subset\{\lambda\in\C: \re\,\lambda<-M\}$.

{\em Step 3.} We now prove that the eigenvalues $\lambda_1(r_1)$ and $\lambda_2(r_1)$ are simple.

First, we prove that the resolvent operator $R(\lambda,\boldsymbol{L}_{i,r_1})$ has
a simple pole at $\lambda_j(r_1)$ ($j=1,2$). We limit ourselves to proving this property for the eigenvalue $\lambda_1(r_1)$, since for the other one the proof is completely similar.

From the proof of Proposition \ref{spectrum Li}, we know that, for any $\lambda\in\rho(\boldsymbol{L}_{i,r_1})$ and any
${\bf f}\in (L^2_{\C})^3$,
\begin{eqnarray*}
R(\lambda,\boldsymbol{L}_{i,r_1}){\bf f}=\left (
\sum_{k=0}^{+\infty}v_{1,k}(\lambda)\tilde e_k,\sum_{k=0}^{+\infty}v_{2,k}(\lambda)\tilde e_k,\sum_{k=0}^{+\infty}v_{3,k}(\lambda)\tilde e_k\right ),
\end{eqnarray*}
where $v_{j,k}(\lambda)$ $(j=1,2,3,\, k\in\N)$ are defined by \eqref{A}-\eqref{C}.

Observe that
\begin{align*}
{\mathcal D}_{k,r_1}(\lambda)=&\left [d_v\lambda^2+\left (\mi d_V+\frac{\alpha\gamma d_V N}{\mv}
+\frac{\mv r_1d_V}{\gamma N T_{\max}}\right )\lambda+\frac{\alpha\gamma\mi d_VN}{\mv}+\frac{\mi\mv r_1d_V}{\gamma NT_{\max}}\right ]\lambda_k+d(\lambda,r_1),
\end{align*}
where $d(\lambda,r_1)$ is independent of $k$, and it is smooth in $\lambda$.

As it is immediately seen the coefficient in front of $\lambda_k$ does not vanish at $\lambda=\lambda_1(r_1)$.
Hence, there exist a neighborhood $U$ of $\lambda_1(r_1)$, $k_0\in\N$ and a positive constant $\chi$ such that
$|{\mathcal D}_{k,r_1}(\lambda)|\ge \chi\lambda_k$ for any $k\ge k_0$ and any $\lambda$ in
$U$.
From \eqref{A}-\eqref{C} we thus deduce that
\begin{eqnarray*}
|v_{j,k}(\lambda)|\le C(|f_{1,k}|+|f_{2,k}|+|f_{3,k}|),
\end{eqnarray*}
for any $k\ge k_0$, $j=1,2,3$ and $\lambda\in U$.

Since ${\mathcal D}_{k,r_1}(\lambda_1(r_1))\neq 0$ for any $k\neq 0$, the previous estimate can be extended
to any $k\ge 1$.
Hence,
\begin{align*}
R(\lambda,\boldsymbol{L}_{i,r_1}){\bf f}=
\left (v_{1,0}(\lambda),v_{2,0}(\lambda),v_{3,0}(\lambda)\right )+\left (
\sum_{k=1}^{+\infty}v_{1,k}(\lambda)\tilde e_k,\sum_{k=1}^{+\infty}v_{2,k}(\lambda)\tilde e_k,\sum_{k=1}^{+\infty}v_{3,k}(\lambda)\tilde e_k\right ),
\end{align*}
where the second term in the previous splitting defines a function with values in ${\mathcal L}((L^2_{\C})^3)$ which is bounded in $U$.

The singularity of $R(\cdot,\boldsymbol{L}_{i,r_1})$ at $\lambda=\lambda_1(r_1)$ is due to the first term of the splitting.
The results in Step 1 show that $\lambda\mapsto D_{0,r_1}(\lambda)$ has a simple zero at $\lambda=\lambda_1(r_1)$.
It thus follows at once that the function $\lambda\mapsto ((\lambda-\lambda_1(r_1))v_{1,0}(\lambda),(\lambda-\lambda_1(r_1))v_{2,0}(\lambda),(\lambda-\lambda_1(r_1))v_{3,0}(\lambda))$ is bounded
around $\lambda=\lambda_1(r_1)$.

Summing up, we have proved that the function $\lambda\mapsto (\lambda-\lambda_1(r_1))R(\lambda,\boldsymbol{L}_{i,r_1})$ is
bounded around $\lambda=\lambda_1(r_1)$. Consequently,
$R(\cdot,\boldsymbol{L}_{i,r_1})$ has a simple pole at $\lambda=\lambda_1(r_1)$, so that, by \cite[Prop. A.2.2]{lunardi1995analytic} $\lambda_1(r_1)$ is a semisimple eigenvalue of $\boldsymbol{L}_{i,r_1}$.

To conclude that it is, actually, a simple eigenvalue, we have to show that the eigenspace associated with $\lambda_1(r_1)$ is one dimensional.
This property follows from recalling that ${\mathcal D}_{k,r_1}(\lambda_1(r_1))\neq 0$ if $k\ge 1$.
Hence, any eigenfunction associated with $\lambda_1(r_1)$ is a constant.

{\em Step 4 .} We now check the transversality condition.
Observing that
\begin{align*}
d_{1,0}(r_1)&=-(\lambda_1(r_1)+\lambda_2(r_1)+\lambda_3(r_1))=\lambda_3(r_1),\\
d_{2,0}(r_1)&=\lambda_1(r_1)\lambda_2(r_1)+\lambda_1(r_1)\lambda_2(r_3)+\lambda_1(r_2)\lambda_2(r_3)
=\lambda(r_1)\lambda(r_2),
\end{align*}
from
\eqref{d1k} and \eqref{d2k} we conclude that
\begin{eqnarray*}
\lambda_j(r_1)=(-1)^j\sqrt{\alpha\gamma N+\frac{\alpha\gamma\mi N}{\mv}
+\frac{\mv r_1}{\gamma NT_{\max}}(\mi+\mv)},\qquad\;\,j=1,2,
\end{eqnarray*}
and
\begin{eqnarray*}
\lambda_3(r_1)=-\mi-\mv-\frac{\mv r_1}{\gamma NT_{\max}}-\frac{\alpha\gamma N}{\mv}.
\end{eqnarray*}

By \cite[Chpt. 20]{K80} the function $r\mapsto\lambda_1(r)$ is smooth in a neighborhood of $r_1$.
Hence, differentiating the formula $(\lambda_1(r))^3+d_{1,0}(r)(\lambda_1(r))^2+d_{2,0}(r)\lambda_1(r)+d_{3,0}(r)=0$,
evaluating it at $r=r_1$ and then taking the real part, we get
\begin{eqnarray*}
\left (\frac{d}{dr}{\rm Re}\lambda_1\right )(r_1)
=\frac{d_{3,0}'(r_1)-d_{1,0}'(r_1)d_{2,0}(r_1)-d_{1,0}(r_1)d_{2,0}'(r_1)}{2(d_{2,0}(r_1)+(d_{1,0}(r_1))^2)}.
\end{eqnarray*}
The sign of $\left (\frac{d}{dr}{\rm Re}\lambda_1\right )(r_1)$ is the sign of
$d_{3,0}'(r_1)-d_{1,0}'(r_1)d_{2,0}(r_1)-d_{1,0}(r_1)d_{2,0}'(r_1)$. A straightforward computation shows that
\begin{align*}
&d_{3,0}'(r_1)-d_{1,0}'(r_1)d_{2,0}(r_1)-d_{1,0}(r_1)d_{2,0}'(r_1)\\
=&\frac{1}{\gamma^2N^2T_{\max}^2}\left [\gamma^2\mi\mt N^2T_{\max}^2
-\gamma NT_{\max}\left (3\mi\mv^2+2\alpha\gamma\mi N
+2\alpha\gamma\mv N+\mv^3+\mi^2\mv\right )\right.\\
&\qquad\qquad\;\;\,\left .-2\mv^3r_1-\mi\mv^2r_1\right ].
\end{align*}
Since
\begin{align*}
r_1\sim \frac{1}{\mi\mv^3}\Big (\mi\mt\mv^3+\alpha\gamma\mv^3 N+\alpha\gamma \mi\mv^2N
+\alpha\gamma \mi^2\mv N+\alpha^2\gamma^2\mv N^2+\alpha^2\gamma^2\mi N^2\Big ),
\end{align*}
as $T_{\max}\to +\infty$ (see \eqref{asympt-r1}), $\left (\frac{d}{dr}{\rm Re}\lambda_1\right )(r_1)$
is positive if $T_{\max}$ is sufficiently large, as we are assuming. Hence, the transversality condition is satisfied.
This completes the proof.
\end{proof}

\begin{proposition}
The bifurcated periodic solutions provided by Theorem \ref{hopf2} are independent of the spatial variables, i.e., they are
the same bifurcated periodic solutions of the following system of ODE's:
\begin{align}
\frac{\partial T}{\partial t}&=\alpha - \mt T+rT\left(1-\frac{T}{T_{\max}}\right) - \gamma V T,\label{ODE-1}\\
\frac{\partial I}{\partial t}&= \gamma V T - \mi I,\label{ODE-2}\\
\frac{\partial V}{\partial t}&=N \mi I- \mv V\label{ODE-3}.
\end{align}%
\end{proposition}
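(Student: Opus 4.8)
The idea is that the entire Hopf bifurcation takes place on a two-dimensional local invariant manifold contained in the subspace of spatially homogeneous states, so the first step is to identify that subspace. Let $Y=\{(T,I,V)\in (L^2)^3:\ T,I,V\ \text{are constant on}\ \Omega_\ell\}\cong\R^3$. Since $\Delta V=0$ whenever $V$ is constant in space, the right-hand sides of \eqref{pde1}-\eqref{pde3} reduce on $Y$ to those of \eqref{ODE-1}-\eqref{ODE-3}; hence $Y$ is invariant under the semiflow generated by \eqref{pde1}-\eqref{pde3}, and the restricted semiflow is exactly the flow of \eqref{ODE-1}-\eqref{ODE-3}. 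Note that ${\bf X}_i(r)\in Y$ and that the linearization of \eqref{ODE-1}-\eqref{ODE-3} at ${\bf X}_i(r)$ is precisely the matrix $M_0(r)$ (recall $\lambda_0=0$), whose eigenvalues are the roots $\lambda_1(r),\lambda_2(r),\lambda_3(r)$ of ${\mathcal D}_{0,r}$. By Steps~1--3 of the proof of Theorem~\ref{hopf2}, at $r=r_j$ the eigenvalues $\lambda_1(r_j),\lambda_2(r_j)$ are simple, purely imaginary and conjugate, with associated eigenfunctions that are constant (because ${\mathcal D}_{k,r_j}(\lambda_1(r_j))\neq0$ for every $k\ge1$), while $\lambda_3(r_j)$ and all the rest of $\sigma(\boldsymbol{L}_{i,r_j})$ lie strictly to the left of, and bounded away from, the imaginary axis. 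In particular the two-dimensional center subspace $E^c$ of $\boldsymbol{L}_{i,r_j}$ is contained in $Y$ and coincides with the center subspace of $M_0(r_j)$.

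Now I would argue as follows. On $Y=\R^3$, the ODE system \eqref{ODE-1}-\eqref{ODE-3}, whose linearization at ${\bf X}_i(r_j)$ has the purely imaginary pair $\lambda_1(r_j),\lambda_2(r_j)$ and the negative eigenvalue $\lambda_3(r_j)$, possesses a two-dimensional local center manifold $W^c\subseteq Y$ through ${\bf X}_i(r_j)$. This $W^c$ is also a local center manifold for \eqref{pde1}-\eqref{pde3}: it is locally invariant under that semiflow, since $Y$ is invariant and $W^c$ is locally invariant under the flow on $Y$; and it is tangent at ${\bf X}_i(r_j)$ to $E^c$ by the previous paragraph. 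Because every small periodic orbit near an equilibrium lies on every local center manifold, the bifurcating periodic solutions ${\bf X}_{\sharp,j}(\varepsilon)$ of Theorem~\ref{hopf2}---which are small and $2\pi\rho_j(\varepsilon)/\omega_j$-periodic with ${\bf X}_{\sharp,j}(0)={\bf X}_i(r_j)$---lie on $W^c$, hence in $Y$; that is, they are independent of $x$. Restricted to $Y$ they solve \eqref{ODE-1}-\eqref{ODE-3}, and since that system undergoes a Hopf bifurcation at $r_j$ as well (its linearization being $M_0(r_j)$, which is the $k=0$ case, cf.\ \cite{FB10}), with a branch of periodic solutions lying on the same $W^c$, the two branches coincide. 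A fully internal variant is to apply the classical Hopf theorem to \eqref{ODE-1}-\eqref{ODE-3} at $r=r_j$, regard the resulting periodic solutions as $x$-independent solutions of \eqref{pde1}-\eqref{pde3}, and invoke the uniqueness statement Theorem~\ref{hopf2}(ii) to identify them with the branch ${\bf X}_{\sharp,j}(\cdot)$.

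I expect the main obstacle to be the standard but non-trivial fact that the small periodic orbits of a semilinear parabolic problem lie on every local center manifold. This requires a center manifold reduction valid in the present partially dissipative setting, which is available because $\boldsymbol{L}_{i,r_j}$ generates an analytic semigroup and its spectrum splits as $\{\lambda_1(r_j),\lambda_2(r_j)\}\cup S$ with $S$ bounded away from the imaginary axis on the left; the same structure underlies \cite[Thm.\ 9.3.3]{lunardi1995analytic}, already used in the proof of Theorem~\ref{hopf2}. In the internal variant, the analogous point is to verify that the reparametrization relating the ODE branch to ${\bf X}_{\sharp,j}(\cdot)$ is a homeomorphism near $0$, which holds because both are one-parameter families through ${\bf X}_i(r_j)$ whose amplitude is comparable to the parameter. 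Everything else follows at once from the invariance of $Y$ and from the spectral information already established for $\boldsymbol{L}_{i,r_j}$.
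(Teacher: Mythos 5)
Your proposal is correct, and its closing ``fully internal variant'' is in fact exactly the paper's proof: the authors take the Hopf branch of the ODE system \eqref{ODE-1}--\eqref{ODE-3} from \cite{FB10}, observe that these solutions are space-independent periodic solutions of \eqref{pde1}--\eqref{pde3}, and then invoke the local uniqueness part of the Hopf theorem (Theorem \ref{hopf2}(ii), together with the analogous uniqueness statement for the ODE problem from \cite[Thm. II, p. 16]{hassard1981theory}) to identify the two branches, modulo shrinking $\varepsilon_0$ and time translation. Your primary argument is a genuinely different and more structural route: you exploit the invariance of the subspace $Y$ of spatially homogeneous states, the fact that the critical eigenfunctions of $\boldsymbol{L}_{i,r_j}$ are constant (so that the center subspace lies in $Y$), and the containment of all small periodic orbits in every local center manifold, to conclude directly that the bifurcating orbits live in $Y$. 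What your route buys is independence from the uniqueness clause of the Hopf theorem and a cleaner geometric explanation of \emph{why} the bifurcation is spatially homogeneous (the instability is carried entirely by the $k=0$ mode); what it costs is the extra technical input you correctly flag, namely a center manifold reduction with the ``all small bounded complete orbits lie on the center manifold'' property in this partially dissipative, analytic-semigroup setting --- machinery the paper does eventually deploy (via \cite{henry1981geometric} and \cite{daprato-lunardi}) in the proof of Theorem \ref{hopf4}, so it is consistent with the framework, but the paper's own proof of this proposition deliberately avoids it.
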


\begin{proof}
In \cite{FB10} it has been proved that System \eqref{ODE-1}-\eqref{ODE-3} exhibits a Hopf bifurcation at $r=r_j$ ($j=1,2$).
A branch of periodic solutions bifurcates from ${\bf X}_i(r_j)$ $(j=1,2)$.
Clearly, such solutions are space independent.
Moreover, a statement analogous to Theorem \ref{hopf2}(ii) holds for the Hopf bifurcation associated with
Problem \eqref{ODE-1}-\eqref{ODE-3}, see \cite[Thm. II, p. 16]{{hassard1981theory}}.
Therefore, up to replacing $\varepsilon_0$ with
a smaller value, if needed, we can infer that, for any $\varepsilon\in (-\varepsilon_0,\varepsilon_0)$,
${\bf X}_{\sharp}(\varepsilon)$ coincides, up to a translation in the time variable,
with one of the bifurcated periodic solutions in \cite[Thm. 4.5]{FB10}. This shows that
any function ${\bf X}_{\sharp}(\varepsilon)$ is space independent.
\end{proof}

We can now prove the following theorem:

\begin{theorem}\label{hopf4}
Suppose that $T_{\max}\ge T_{\max}^{(3)}$, where
$T_{\max}^{(3)}$ depends on $\alpha$, $\gamma$, $\mi$, $\mt$ and $\mv$ $($see the proof$)$. Then, the following properties are satisfied.
\begin{enumerate}[\rm (i)]
\item
If $N<N_*$ $($where $N_*$ is the first positive zero of the function ${\mathcal H}$ in \eqref{funct-N}$)$, then the periodic
solution ${\mathbf X}_{\#}^1(\e)$ is orbitally asymptotically stable with asymptotic phase.
\item
For any $N>0$, the periodic solution  ${\mathbf X}_{\#}^2(\e)$ is orbitally asymptotically stable with asymptotic phase.
\end{enumerate}
\end{theorem}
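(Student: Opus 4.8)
The plan is to reduce the orbital stability question for the partially dissipative PDE first to the three-dimensional ODE system \eqref{ODE-1}--\eqref{ODE-3} and then to the sign of a single Floquet exponent. By the preceding Proposition the bifurcated periodic solutions ${\mathbf X}_{\#}^j(\e)$ ($j=1,2$) are independent of the space variables, hence they are precisely the periodic orbits bifurcating from ${\mathbf X}_i(r_j)$ for the ODE system. Consequently, their orbital stability against space-independent perturbations coincides with the orbital stability of the ODE periodic orbit, which is governed by its unique nontrivial Floquet multiplier, equivalently by the sign of $\re c_1(0)$ (the cubic coefficient of the Poincar\'e normal form, equivalently the Floquet exponent $\beta_2=2\re c_1(0)$) on the two-dimensional center manifold of $M_0$ associated with the purely imaginary pair $\lambda_1(r_j),\lambda_2(r_j)$ exhibited in Step~1 of the proof of Theorem \ref{hopf2}.

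Next I would lift this to orbital asymptotic stability, with asymptotic phase, for arbitrary space-dependent perturbations in $L^2_{\sharp}\times L^2_{\sharp}\times H^2_{\sharp}$. The non-compactness of $R(\lambda,\boldsymbol{L}_{i,r_j})$ is handled exactly as in Theorem \ref{hopf2}: by Step~2 of that proof there is $M>0$ with $\bigcup_{k\ge 1}\sigma_{k,r}\subset\{\re\,\lambda<-M\}$ for $r$ near $r_j$, while $\lambda_3(r)$ is real and bounded away from the imaginary axis. Since $e^{t\boldsymbol{L}_{i,r_j}}$ is analytic, the center manifold theorem (cf. \cite[Ch.~9]{lunardi1995analytic}) produces a two-dimensional local center manifold whose reduced equation is exactly the center-manifold reduction of the ODE system, the transverse directions being exponentially attracting; the reduction principle for periodic orbits under an analytic semigroup then transfers the orbital stability, and the asymptotic phase, of the ODE orbit to the PDE orbit.

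It then remains to compute the sign of $\re c_1(0)$ for the ODE at $r=r_j$. I would diagonalize $M_0$ at the Hopf point, rewrite the nonlinearity — whose only nonlinear contributions are the quadratic terms $-rT_{\max}^{-1}u_1^2-\gamma u_1u_3$ in $\boldsymbol{\mathcal F}_1$ and $\gamma u_1u_3$ in $\boldsymbol{\mathcal F}_2$, there being no cubic terms — in the eigenbasis, and apply the Hassard--Kazarinoff--Wan formula for $c_1(0)$. This produces a rational function of the parameters; substituting $r=r_j$ and using the asymptotics \eqref{asympt-r1} for $r_1$ (and the analogous expansion for $r_2$), the sign reduces, up to an $o(1)$ term as $T_{\max}\to+\infty$, to that of the explicit function ${\mathcal H}(N)$ in \eqref{funct-N}. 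One checks that for $j=2$ this quantity is negative for every $N>0$, whereas for $j=1$ it is negative precisely for $N<N_*$, $N_*$ being the first positive zero of ${\mathcal H}$; choosing $T_{\max}^{(3)}$ large enough that the $o(1)$ corrections cannot change these signs yields $\beta_2<0$, hence orbital asymptotic stability with asymptotic phase in the stated ranges.

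The main obstacle is the explicit evaluation of $c_1(0)$: one must compute the eigenvectors of $M_0$ and of $M_0^{*}$ at the Hopf point, the bilinear (and vanishing trilinear) forms on the center eigenspace, and the second-order resonant and non-resonant terms, which is a lengthy algebraic computation; extracting the sign uniformly in $N$ then requires a careful large-$T_{\max}$ expansion. By contrast, the passage from the PDE to the ODE, though subtle because $\boldsymbol{L}_{i,r_j}$ has non-compact resolvent, is routine once the uniform spectral gap and the analyticity of the semigroup established in Theorem \ref{hopf2} are available.
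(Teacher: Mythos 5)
Your proposal is correct and follows essentially the same route as the paper: a center-manifold reduction justified by the spectral gap of Step 2 of Theorem \ref{hopf2} and the analyticity of the semigroup, the Hassard--Kazarinoff--Wan coefficient $c_1(r_j)$ with the purely quadratic nonlinearity, and a large-$T_{\max}$ asymptotic expansion that reduces the sign of ${\rm Re}\,c_1(r_1)$ to that of ${\mathcal H}(N)$ and shows ${\rm Re}\,c_1(r_2)<0$ for every $N>0$. The only cosmetic difference is that the paper performs the reduction directly in $(L^2)^3$ via the explicit spectral projections $P_j$ rather than routing through the ODE system first; since the critical eigenfunctions and the second-order center-manifold corrections are spatially constant, the two computations coincide.
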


\begin{proof}
The arguments in Henry's book \cite{henry1981geometric} (see also \cite{daprato-lunardi} in a more general situation) show
that the stability of the bifurcated periodic solutions can be read on a Center Manifold.
This allows to reduce our problem, which is set in a infinite dimensional Banach space, to a problem in a finite dimensional space.

To obtain this finite dimensional problem, we first need to determine the spectral projection associated to the eigenvalues $-\omega_j i$ and
$\omega_ji$ ($j=1,2$).
As a general fact, such a projection is the sum of the spectral projections $P_{j,+}$,
associated to the eigenvalue $i\omega_j$, and
$P_{j,-}$, associated to the eigenvalue $-i\omega_j$. Since $i\omega_j$ and $-i\omega_j$ are simple eigenvalues (see Theorem
\ref{hopf2}), there exists a unique projection on the eigenspace relative to $i\omega_j$
which commutes with $\boldsymbol{L}_i$. Similarly, there exists a unique projection of the eigenspace relative
to $-i\omega_j$ which commutes with $\boldsymbol{L}_i$. Using these facts it is
easy to check that
\begin{eqnarray*}
P_j{\mathbf v}=P_{j,+}{\mathbf v}+ P_{j,-}{\mathbf v}=
\kappa_j\left (\int_{\Omega_{\ell}}{\mathbf v}\overline{\boldsymbol{\psi_j}}dxdy\right )\boldsymbol{\varphi_j}+
\overline{\kappa_j}\left (\int_{\Omega_{\ell}}{\mathbf v}\boldsymbol{\psi_j}dxdy\right )\overline{\boldsymbol{\varphi_j}},
\end{eqnarray*}
for any ${\bf v}\in (L^2_{\mathbb C})^3$, where
\begin{eqnarray*}
\boldsymbol{\varphi_j}=\left (\frac {\mv}{N(s_j-i\omega_j) },\frac{\mv+i \omega_j}{\mi N},1\right ),\qquad\;\,
\boldsymbol{\psi_j}
=\left ( -\frac {\xi_j}{s_j+i\omega_j},1,\frac{\mi-i\omega_j}{\mi N}\right )
\end{eqnarray*}
and
\begin{eqnarray*}
s_j=\displaystyle-\frac {\mv r_j}{\gamma NT_{\max}}-\frac {\alpha\gamma N}{\mv}, \qquad\;\,
\xi_j=\displaystyle\frac{\alpha\gamma N}{\mv}-\mt+\left (1-\frac{\mv}{\gamma NT_{\max}}\right )r_j,\qquad\;\,
\kappa^{-1}_j=(\boldsymbol{\varphi_j},\boldsymbol{\psi_j})_2,
\end{eqnarray*}
for $j=1,2$.
 In what follows we set
$\boldsymbol{\varphi_j}=(\varphi_{j,1},\varphi_{j,2},\varphi_{j,3})$ and
$\boldsymbol{\psi_j}=(\psi_{j,1},\psi_{j,2},\psi_{j,3})$.

As it is well known, $P_j$ allows to split $(L^2_{\C})^3$ into the direct sum of the two subspaces
$P_j((L^2_{\C})^3)$ and $(I-P_j)((L^2_{\C})^3)$ where
$P_j((L^2_{\C})^3)=\{z\boldsymbol{\varphi}+w\overline{\boldsymbol{\varphi}} : z,w\in\mathbb{C}\}$ and
$(I-P_j)((L^2_{\C})^3)=\{{\mathbf u}\in (L^2_{\C})^3: ({\mathbf v},\boldsymbol{\psi})_2=({\mathbf v},\overline{\boldsymbol{\psi}})_2=0\}$.
In particular, $P_j$ maps $(L^2)^3$ into itself and allows us to split the space $(L^2)^3$ into the direct sum of the two subspaces
$P_j((L^2)^3)=\{z\boldsymbol{\varphi}+\overline{z}\,\overline{\boldsymbol{\varphi}} : z\in\mathbb{C}\}$ and
$(I-P_j)((L^2)^3)=\{{\mathbf u}\in (L^2)^3: ({\mathbf u},\boldsymbol{\psi})_2=0\}$.

Let us rewrite Problem \eqref{eq-tilde-X} in the form
\begin{equation}
\frac{d{\mathbf u}}{dt}=\boldsymbol{L}_{i,r_j}{\mathbf u}+\boldsymbol{{\mathcal G}_j}({\mathbf u},s),
\label{eq-tilde-tilde-X}
\end{equation}
where
\begin{align*}
\boldsymbol{{\mathcal G}_{j,1}}({\bf u},s)=&
-\frac{\mv s}{\gamma NT_{\max}}u_1-\frac{r_j+s}{T_{\max}}u_1^2-\gamma u_1u_3;\\[1mm]
\boldsymbol{{\mathcal G}_{j,2}}({\bf u},s)=&
s\left (1-\frac{\mv}{\gamma N T_{\max}}\right )u_1
+\gamma u_1u_3;\\[1mm]
\boldsymbol{{\mathcal G}_{3,j}}({\bf u},s)=&0.
\end{align*}
Splitting Problem \eqref{eq-tilde-tilde-X} along $P_j((L^2)^3)$ and $(I-P_j)((L^2)^3)$, we see that any solution
${\bf u}\in C^1([0,a)\times (L^2)^3)\cap C([0,a)\times L^2\times L^2\times H^2_{\sharp})$
to Problem \eqref{eq-tilde-tilde-X}, defined in some
time domain $[0,a)$, can be identified with the pair of functions $(z,w)$, with $z(t)\in\C$
and ${\bf w}(t)\in L^2\times L^2\times H^2_{\sharp}$ for any $t\in [0,a)$, which solves the system
\begin{numcases}{ }
\frac{dz}{dt}=i\omega_j z+\tilde{{\mathcal G}_j}(z,\overline{z},{\bf w},s),
\label{dzdt} \\
\frac{d {\mathbf w}}{dt}=\boldsymbol{L}_i {\mathbf w}+\boldsymbol{{\mathcal K}_j}(z,\overline{z},{\mathbf w},s),
\label{dwdt}
\end{numcases}
where
\begin{align*}
\tilde{{\mathcal G}_j}(z,\overline{z},{\mathbf w},s)
=&\kappa_j\left (\gamma \varphi_{j,1}(1-\overline{\psi_{j,1}})-\frac{(r_j+s)\varphi_{j,1}^2\overline{\psi_{j,1}}}{T_{\max}}\right )\ell^2 z^2\\
&+\kappa_j\left (\gamma\overline{\varphi_{j,1}}(1-\overline{\psi_{j,1}})-\frac{(r_j+s)\overline{\varphi_{j,1}}^2\overline{\psi_{j,1}}}{T_{\max}}\right )\ell^2\overline{z}^2\\
&+2\kappa_j\left (\gamma(1-\overline{\psi_{j,1}}){\rm Re}\,(\varphi_{j,1})-\frac{(r_j+s)|\varphi_{j,1}|^2\overline{\psi_{j,1}}}{T_{\max}}\right )\ell^2 z\overline{z}\\
&
+\kappa_j\left [\int_{\Om_\ell}\left (\gamma(1-\overline{\psi_{j,1}})(\varphi_{j,1} w_3+w_1)-\frac{2(r_j+s)\varphi_{j,1}\overline{\psi_{j,1}}}{T_{\max}}w_1\right )dxdy\right.\\
&\qquad\;\,-\left.\frac{\mv s\varphi_{j,1}\overline{\psi_{j,1}}}{\gamma NT_{\max}}\ell^2+s\left (1-\frac{\mv}{\gamma N T_{\max}}\right )\varphi_{j,1}\ell^2\right ]z\\
&+\kappa_j\left [\int_{\Om_{\ell}}\left (\gamma(1-\overline{\psi_{j,1}})(\overline{\varphi_{j,1}}w_3+w_1)
-\frac{2(r_j+s)\overline{\varphi_{j,1}}\overline{\psi_{j,1}}}{T_{\max}}w_1\right )dxdy\right.\\
&\qquad\;\,-\left.\frac{\mv s\overline{\varphi_{j,1}}\overline{\psi_{j,1}}}{\gamma NT_{\max}}\ell^2+s\left (1-\frac{\mv}{\gamma N T_{\max}}\right )\overline{\varphi_{j,1}}\ell^2\right ]\overline{z}\\
&+\kappa_j\int_{\Om_\ell}\left (\gamma(1-\overline{\psi_{j,1}})w_3-\frac{(r_j+s)\overline{\psi_{j,1}}}{T_{\max}}w_1
-\frac{\mv s(\overline{\psi_{j,1}}+1)}{\gamma NT_{\max}}+s\right )w_1dxdy;\\[2mm]
\boldsymbol{{\mathcal K}_j}(z,\overline{z},{\mathbf w},s)=&\boldsymbol{{\mathcal G}_j}(z,\overline{z},{\mathbf w},s)
-2{\rm Re}\,(\tilde{{\mathcal G}_j}(z,\overline z, {\mathbf w},s)\boldsymbol{\varphi}),
\end{align*}
for $j=1,2$.
Modulo the identification of $P_j((L^2)^3)$ with the set $\{(z,\overline{z}): z\in\C\}$, the Center Manifold
for System \eqref{dzdt}-\eqref{dwdt} is the graph of a smooth function $\boldsymbol{\Upsilon_j}$ of the variable $(z,\overline z,s)$, defined in a neighborhood of zero with values
in $(I-P_j)((L^2)^3)$.

The equation to be analyzed, to understand the stability of the bifurcated solutions ${\bf X}_{\sharp}(\varepsilon)$, is therefore the following one:
\begin{equation}
\frac{dz}{dt}=i\omega_j z+\tilde{{\mathcal G}_j}(z,\overline{z},\boldsymbol{\Upsilon_j}(z,\overline{z},s),s)=:g_j(z,\overline{z},s).
\label{ODE}
\end{equation}
This ODE can be studied with classical methods (see e.g., \cite[Chpts. 1 \& 2]{hassard1981theory}).
One needs to expand the nonlinearity $g_j$ around $0$ as
\begin{eqnarray*}
g_j(z,\overline{z},0)=\sum_{2\leq h+k\leq3}\frac {g^{(j)}_{hk}}{h!k!}z_h\overline{z}_k+o(|z|^4).
\end{eqnarray*}
The coefficients $g_{hk}(s)$ are fundamental to determine the stability of the periodic solutions to \eqref{ODE}. In fact,
such solutions are stable if and only if $\re\, c_1(r_j)<0$, where (see e.g., \cite[p. 90]{hassard1981theory})
\begin{eqnarray*}
c_1(r_j)=\frac{i}{2\omega_j}\left (g_{20}^{(j)}g_{11}^{(j)}-2|g_{11}^{(j)}|^2-\frac{1}{3}|g_{02}^{(j)}|^2\right )+\frac{1}{2}g_{21}^{(j)}.
\end{eqnarray*}

To expand $g$ around the origin, one first needs to expand the function $\Upsilon(\cdot,\cdot,0)$ around $(0,0)$.
Since this function is smooth, we can expand it as
\begin{eqnarray*}
\boldsymbol{\Upsilon_j}(z,\overline{z},0)={\bf a}_1^{(j)}z+{\bf a_2}^{(j)}\overline{z}
+{\bf a_3}^{(j)}z^2+{\bf a_4}^{(j)}z\overline z+{\bf a_5}^{(j)}z^2+O(|z|^3).
\end{eqnarray*}
Replacing ${\bf u}(t)=z(t)\boldsymbol{\varphi}+\overline{z(t)}\overline{\boldsymbol{\varphi}}+\boldsymbol{\Upsilon}(z(t),\overline{z(t)})$ into System \eqref{dzdt}-\eqref{dwdt}, expanding
\begin{align*}
\boldsymbol{{\mathcal K}_j}(z,\overline{z},{\mathbf w},0)=\boldsymbol{{\mathcal K}_{j,1}}z^2+2\boldsymbol{{\mathcal K}_{j,2}}z\overline{z}+\overline{\boldsymbol{{\mathcal K}_{j,1}}}\,\overline{z}^2+O(|z||w|)+O(|w|^2),
\end{align*}
and observing that
\begin{eqnarray*}
\frac{d}{dt}{\bf w}(t)=\frac{\partial\boldsymbol{\Upsilon_j}}{\partial z}(z(t),\overline{z(t)})z'(t)
+\frac{\partial\boldsymbol{\Upsilon_j}}{\partial\overline{z}}(z(t),\overline{z(t)})\overline{z'(t)},
\end{eqnarray*}
an asymptotic analysis reveals that
\begin{eqnarray*}
\boldsymbol{\Upsilon_j}(z,\overline z,0)=z^2(2i\omega_j-\boldsymbol{L}_{i,r_j})^{-1}\boldsymbol{{\mathcal K}_{j,1}}-2z\overline{z}\boldsymbol{L}_{i,r_j}^{-1}\boldsymbol{{\mathcal K}_{j,2}}+\overline{z}^2(-2i\omega_j-\boldsymbol{L}_{i,r_j})^{-1}\overline{\boldsymbol{{\mathcal K}_{j,1}}}+O(|z|^3).
\end{eqnarray*}
where
\begin{align*}
&\boldsymbol{{\mathcal K}_{j,1}}=
\begin{pmatrix}
   \displaystyle \left(\frac{r_j\varphi_{j,1}^2}{T_{\max}}+\gamma \varphi_{j,1} \right)\left (2\ell^2{\rm Re}\,(\kappa_j\varphi_{j,1} \overline{\psi_{j,1}})-1\right )
   -2\gamma\ell^2 \varphi_{j,1}{\rm Re}\,(\kappa_j\varphi_{j,1})\\[3mm]
    \displaystyle 2\ell^2\left(\frac{\varphi_{j,1}^2 r_j}{T_{\max}}+\gamma \varphi_{j,1} \right){\rm Re}\,(\kappa_j\varphi_{j,2} \overline{\psi_{j,1}})
   + \gamma \varphi_{j,1}\left (1-2\ell^2{\rm Re}\,(\kappa_j\varphi_{j,2})\right )\\[3mm]
    \displaystyle 2\ell^2\left(\frac{\varphi_{j,1}^2 r_j}{T_{\max}}+\gamma \varphi_{j,1} \right){\rm Re}\,(\kappa_j\overline{\psi_{j,1}})
   -2\gamma\ell^2 \varphi_{j,1}{\rm Re}\,(\kappa_j)
   \end{pmatrix},
   \\[3mm]
&\boldsymbol{{\mathcal K}_{j,2}}=
\begin{pmatrix}
   \displaystyle \left (\frac{r_j|\varphi_{j,1}|^2}{T_{\max}}+\gamma {\rm Re}\,(\varphi_{j,1})\right )\left (2\ell^2{\rm Re}\,(\kappa_j\varphi_{j,1} \overline{\psi_{j,1}})-1\right )
   -2\gamma\ell^2{\rm Re}\,(\varphi_{j,1}){\rm Re}\,(\kappa_j\varphi_{j,1})\\[3mm]
    \displaystyle 2\ell^2\left (\frac{r_j |\varphi_{j,1}|^2}{T_{\max}}+ \gamma {\rm Re}\,(\varphi_{j,1})\right ){\rm Re}\,(\kappa_j\varphi_{j,2} \overline{\psi_{j,1}})
   +\gamma{\rm Re}\,(\varphi_{j,1})\left (1-2\ell^2{\rm Re}\,(\kappa_j\varphi_{j,2})\right )\\[3mm]
    \displaystyle 2\ell^2\left (\frac{r_j |\varphi_{j,1}|^2}{T_{\max}}+\gamma {\rm Re}\,(\varphi_{j,1})\right ){\rm Re}\,(\kappa_j\overline{\psi_{j,1}})
   -2\gamma\ell^2 {\rm Re}\,(\varphi_{j,1}){\rm Re}\,(\kappa_j)
\end{pmatrix}.
\end{align*}
Note that, since $\boldsymbol{{\mathcal K_j}}(z,\overline z,{\mathbf w})\in (I-P_j)((L^2)^3)$ for any $z\in\mathbb C$ and ${\bf w}\in (I-P_j)((L^2)^3)$,
$\boldsymbol{{\mathcal K}_{j,1}}$, $\boldsymbol{{\mathcal K}_{j,2}}$ and
$\overline{\boldsymbol{{\mathcal K}_{j1}}}$ belong to $(I-P_j)((L^2_{\C})^3)$ and the operators $\pm i\omega-\boldsymbol{L}_{i,r_j}$ are invertible
on $(I-P_j)((L^2_{\C})^3)$.

Now a long but straightforward computation shows that
\begin{align*}
g_{20}^{(j)}=&-\frac{2r_j\kappa_j\varphi_{j,1}^2\overline{\psi_{j,1}}\ell^2}{T_{\max}}+2\gamma\kappa_j \varphi_{j,1}(\overline{\psi_{j,2}}-\overline{\psi_{j,1}})\ell^2,\\
g_{11}^{(j)}=&-\frac{2r_j\kappa_j|\varphi_{j,1}|^2\overline{\psi_{j,1}}\ell^2}{T_{\max}}
+2\gamma\kappa_j\ell^2(\overline{\psi_{j,2}}-\overline{\psi_{j,1}}){\rm Re}(\varphi_{j,1}),\\
g_{02}^{(j)}=&-\frac{2r_j\overline{\varphi_{j,1}}^2\overline{{\overline\kappa_j}\psi_{j,1}}\ell^2}{T_{\max}}
+2\gamma\kappa\overline{\varphi_{j,1}}(\overline{\psi_{j,2}}-\overline{\psi_{j,1}})\ell^2,\\
g_{21}^{(j)}=&-\frac{2r_j\kappa_j\overline{\psi_{j,1}}\ell^2}{T_{\max}}\Big [(\overline{\varphi_{j,1}}(2i\omega_j-\boldsymbol{L}_{i,r_j})^{-1}\boldsymbol{{\mathcal K}_{j,1}})_1
-2\varphi_{j,1}(\boldsymbol{L}_{i,r_j}^{-1}\boldsymbol{{\mathcal K}_{j,2}})_1\Big ]\\
&+\gamma\kappa_j(\overline{\psi_{j,2}}-\overline{\psi_{j,1}})\ell^2
\Big [\overline{\varphi_{j,1}}((2i\omega_j-\boldsymbol{L}_{i,r_j})^{-1}
\boldsymbol{{\mathcal K}_{j,1}})_3+((2i\omega_j-\boldsymbol{L}_{i,r_j})^{-1}\boldsymbol{{\mathcal K}_{j,1}})_1\\
&\qquad\qquad\qquad\qquad\;-2\varphi_{j,1}(\boldsymbol{L}_{i,r_j}^{-1}\boldsymbol{{\mathcal K}_{j,2}})_3-2(\boldsymbol{L}_{i,r_j}^{-1}\boldsymbol{{\mathcal K}_{j,2}})_1\Big ],
\end{align*}
where $(\cdot)_k$ denotes the $k$-th component of the vector in brackets.

Since an explicit computation of these coefficients for any value of $T_{\max}$ is uneasy, and we are interested in large (enough)
values of $T_{\max}$, as in \cite[Sec. 4.3]{FB10} we determine the sign of ${\rm Re}\,c_1(r_j)$ via an asymptotic analysis as $T_{\max}\to +\infty$. We get
\begin{align}
{\rm Re}\,c_1(r_1) = \frac{\gamma \mv^2}{{\mathcal D}(N)}{\mathcal H}(N)+ o(1),
\qquad\;\,{\rm Re}\,c_1(r_2)=-\frac{50(\mi+\mv)^3}{\mi^2N^2T_{\max}^2} + o(T_{\max}^{-2}),
\label{AB}
\end{align}
where ${\mathcal D}(N)$ and ${\mathcal H}(N)$ are respectively given by
\begin{align}
{\mathcal D}(N)=&2\alpha(\mi\mv+\mv^2+\alpha\gamma N)(\mi^2\mv^2+2\mi\mv^3+6\alpha\gamma\mi\mv N+\mv^4+6\alpha\gamma\mv^2 N+\alpha^2\gamma^2N^2)\notag\\
&\qquad\times(\alpha^2\gamma^2N^2+3\alpha\gamma\mi\mv N+3\alpha\gamma \mv^2N+\mi^2\mv^2+2\mi\mv^3+\mv^4)\notag\\
&\qquad\times(\mi^2\mv+2\mi\mv^2+\mi\mv+\mv^3+\mv^2+\alpha\gamma\mv N)N,
\label{funct-D}
\\[2mm]
{\mathcal H}(N) = &3\alpha^5\gamma^5(\mi+\mv)^2N^5-\alpha^4\gamma^4\mv(\mi+\mv)(12\mi^2+35\mi\mv+12\mv^2)N^4\notag\\
&-\alpha^3\gamma^3\mv^3(26\mi^4+151\mi^3\mv+247\mi^2\mv^2+151\mi\mv^3+26\mv^4)N^3\notag\\
&-\alpha^2\gamma^2\mv^3(\mi+\mv)(12\mi^4+85\mi^3\mv+134\mi^2\mv^2+85\mi\mv^3+12\mv^4)N^2\notag\\
&-\alpha\gamma\mv^4(\mi+\mv)^2(\mi^4+13\mi^3\mv+35\mi^2\mv^2+13\mi\mv^3+\mv^4)N-4\mi^2\mv^7(\mi+\mv)^3.
\label{funct-N}
\end{align}
Whereas ${\mathcal D}(N)$ is always positive, the sign of ${\mathcal H}(N)$ depends on $N$.
Since ${\mathcal H}(0)<0$ and $\lim_{N \to +\infty}{\mathcal H}(N)=+\infty$, the function ${\mathcal H}$ has at least a positive zero.
We define by $N_*$ the (first) positive zero of ${\mathcal H}$. Therefore, ${\mathcal H}(N)<0$ for
$0 \leq N <N_*$. We thus conclude that, for $T_{\max}$ large enough (let us say $T_{\max}>T_{\max}^{(3)}>T_{\max}^{(2)}$, which depends on $\alpha$,
$\gamma$, $\mi$, $\mt$ and $\mv$), ${\rm Re}\,c_1(r_1)<0$ for any $0<N<N_*$, whereas
${\rm Re}\,c_1(r_2)<0$ for any $N>0$.
This completes the proof.
\end{proof}



\section{\bf Numerical results}\label{numerics}
In order to show the stability of the infected steady state numerically, we
can fix $N=300$, start from the value $r_{crit,0}=0.05625$, increase
the logistical parameter $r$ monotonically until a critical
condition is reached such that any further change would result in
instability, other parameters can be found in Table \ref{table1}. We
present the graphs of numerical solution of the system
\eqref{pde1}-\eqref{pde3} and the trajectory of the solution in the
three-dimensional $T$-$V$-$I$ space. Initial
data are $T_0=T_u+\varepsilon(\sin x\cos y),~I_0=0.0,~V_0=0.0185$. Some figures assure that
this solution approaches the limit cycle in the instability
subdomain $\mathcal P$.
In Figure \ref{N300r200} corresponding to the subdomain $\mathcal P$,
the solution approaches the periodic orbit.

\begin{figure}[ht]\hspace{0cm}
\subfloat[Densities of virus $V$]{
\begin{minipage}[]{0.5\textwidth}
\centering
\includegraphics[height=4.5cm,width=5.5cm,angle=0]{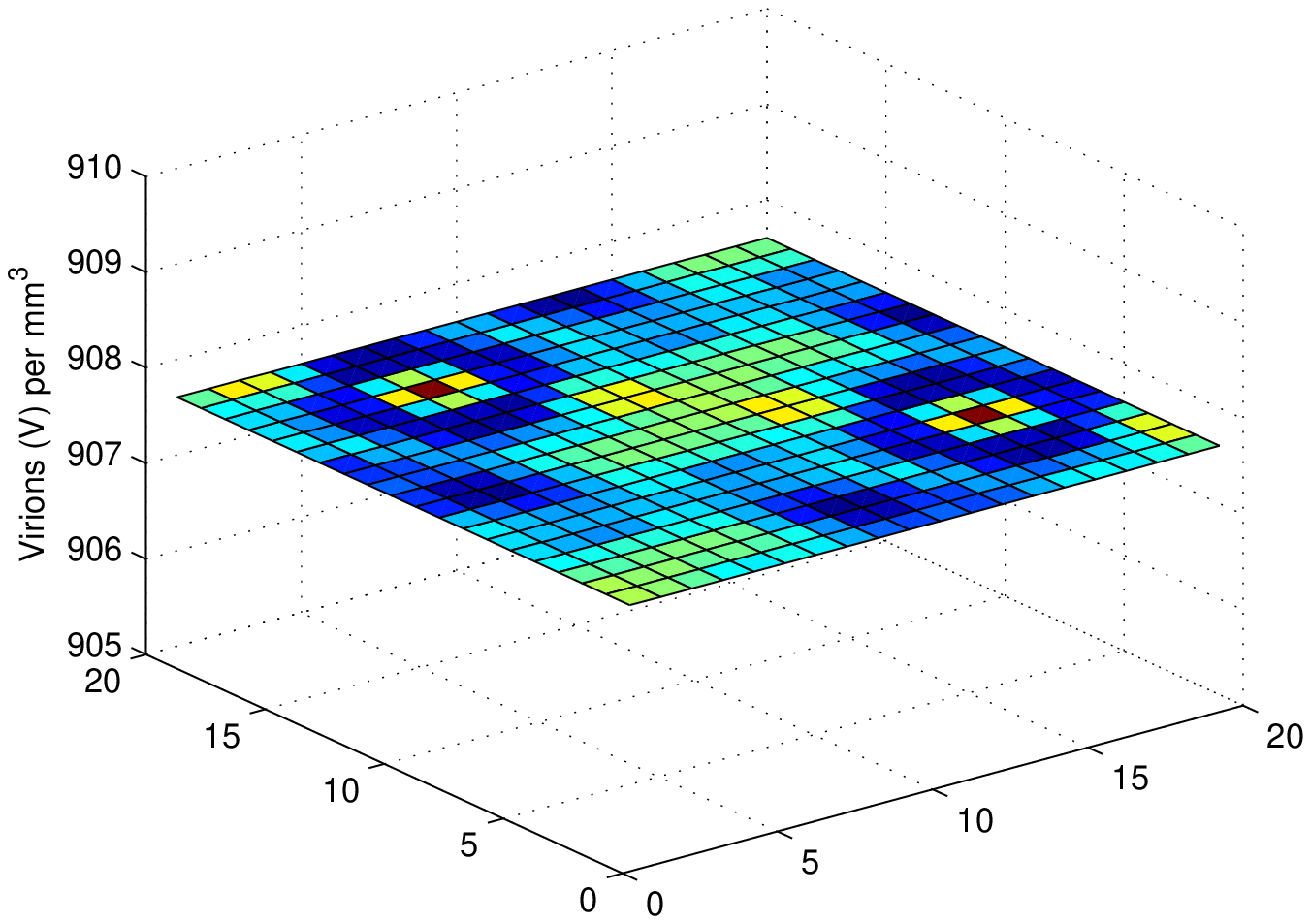}
\end{minipage}} \hspace{-1cm}
\subfloat[Densities of target cells $T$]{
\begin{minipage}[]{0.5\textwidth}
\centering
\includegraphics[height=4.5cm,width=5.5cm,angle=0]{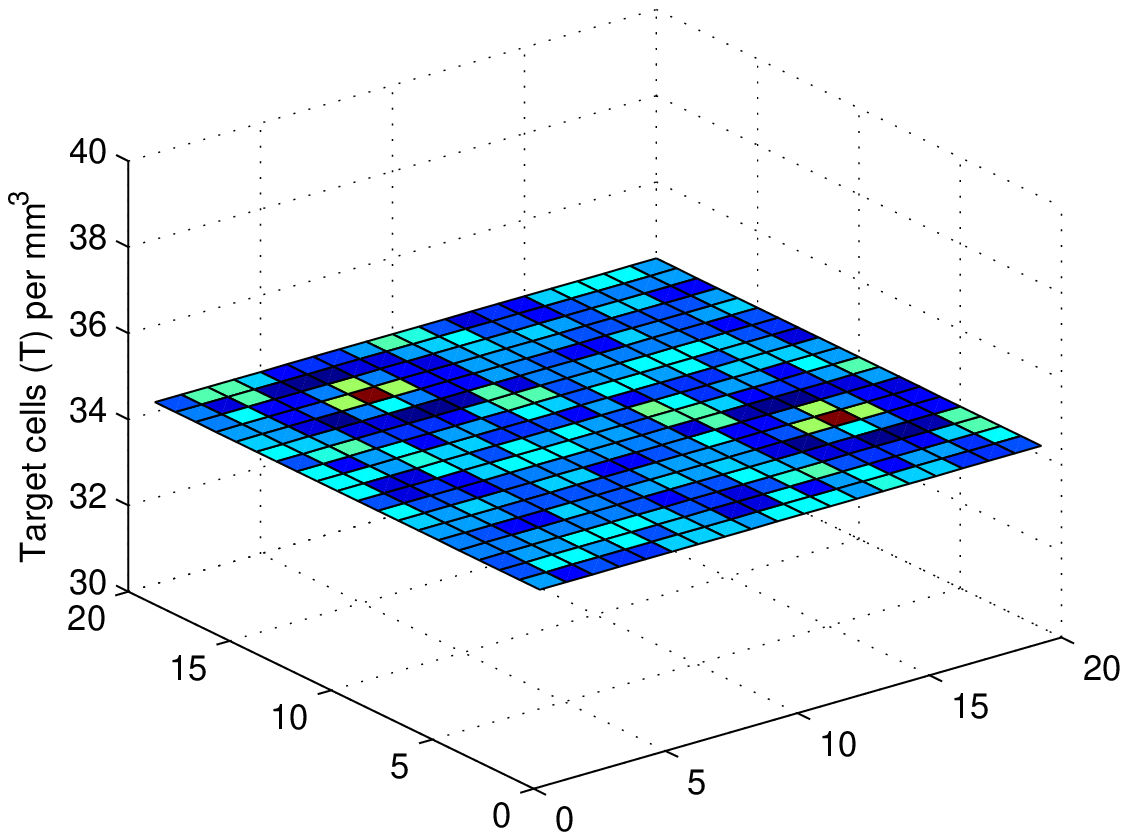}
\end{minipage}}\\
\subfloat[Free virus at (10,10)]{
\begin{minipage}[]{0.5\textwidth}
\centering
\includegraphics[height=4.5cm,width=5cm,angle=0]{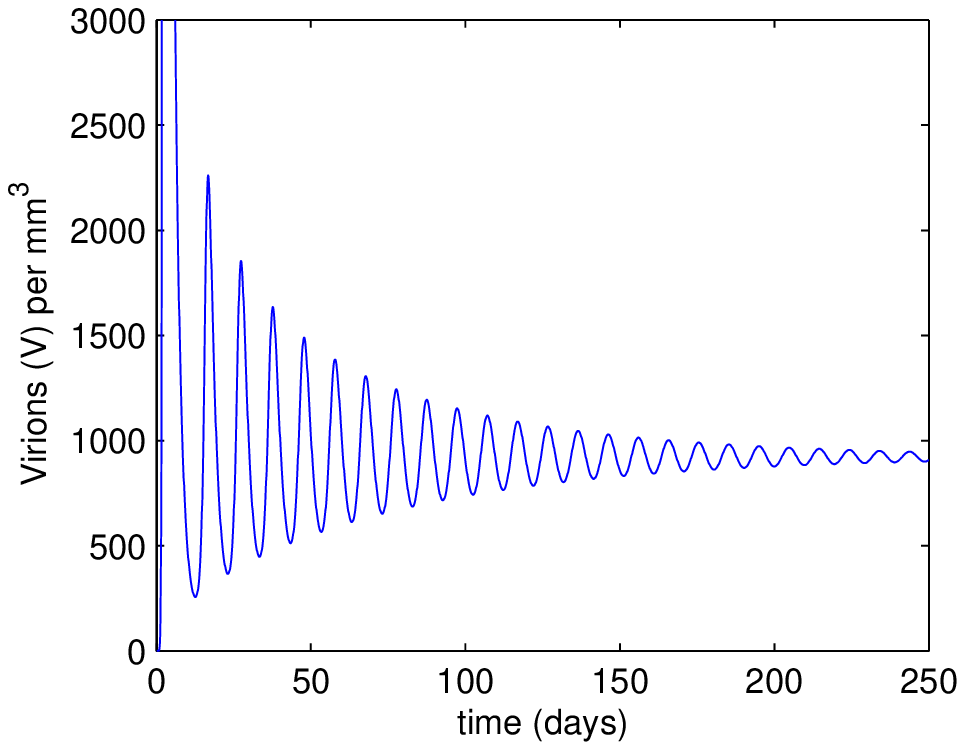}
\end{minipage}} \hspace{-1cm}
\subfloat[Target cells at (10,10)]{
\begin{minipage}[]{0.5\textwidth}
\centering
\includegraphics[height=4.5cm,width=5cm,angle=0]{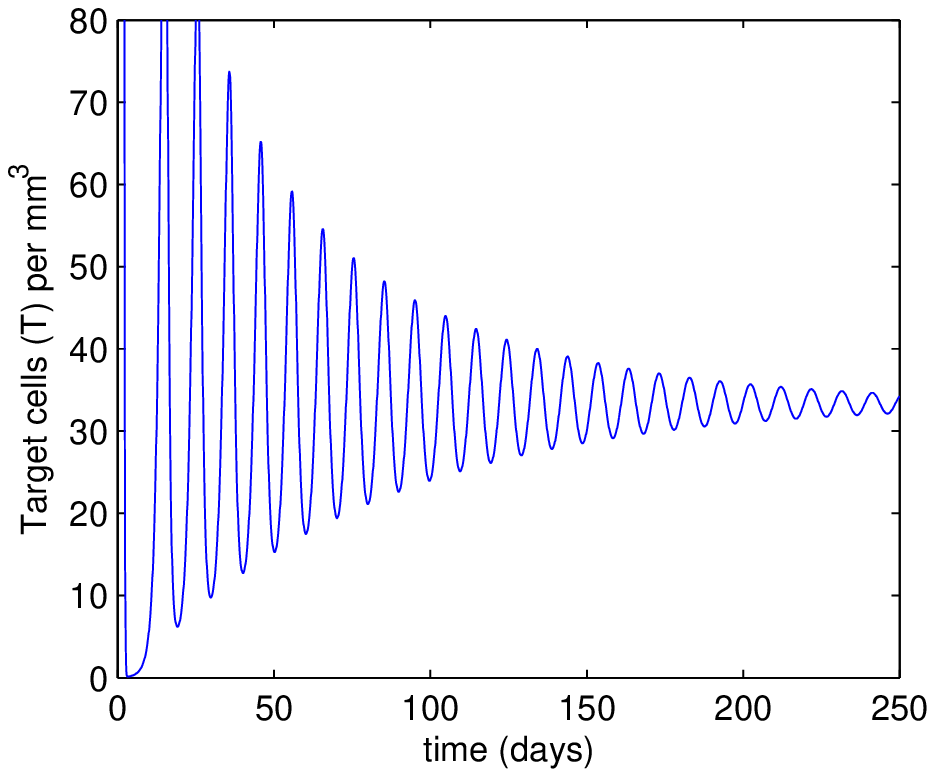}
\end{minipage}}\\\hspace{-1cm}
\subfloat[Free virus at (10,10)]{
\begin{minipage}[]{0.5\textwidth}
\centering
\includegraphics[height=4.5cm,width=6.5cm,angle=0]{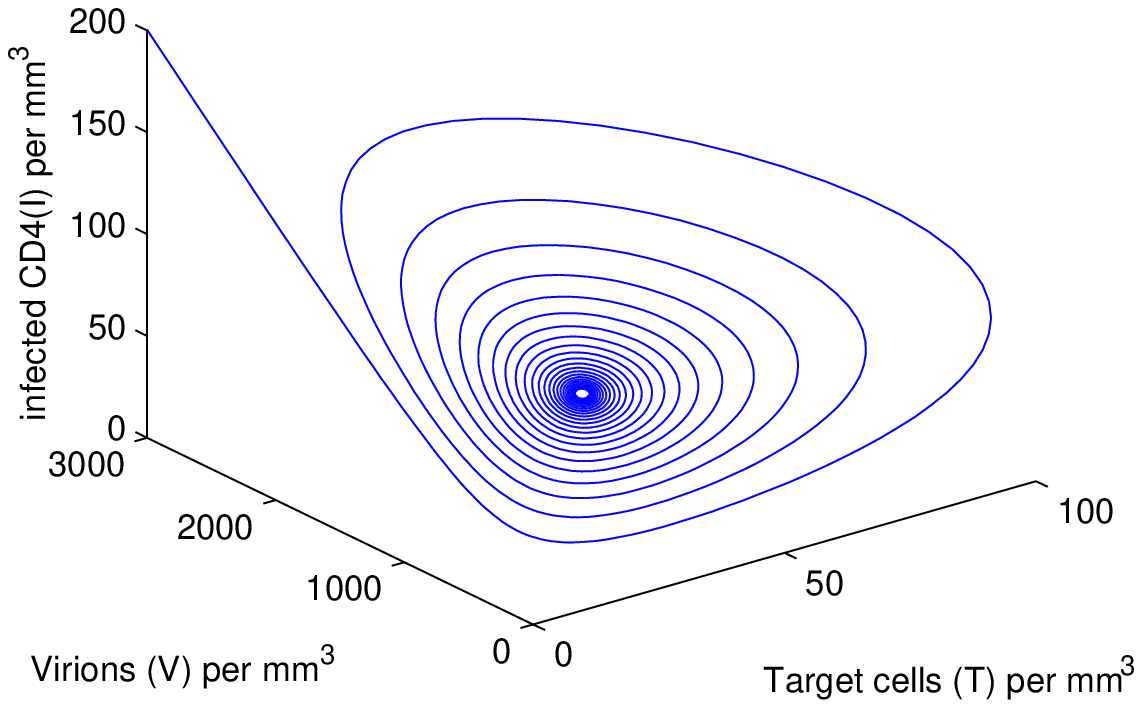}
\end{minipage}} \hspace{-1cm}
\caption{Dynamical solution of System \eqref{pde1}-\eqref{pde3} on a 20$\times$20 grid. Parameter values are $N=300$ and
$r_{\rm crit}=0.05625 <r=1.0 <r_1 = 2.1846$. The infected equilibrium is
stable.} \label{N300r1}
\end{figure}

\begin{figure}[ht]\hspace{0cm}
\subfloat[Densities of virus $V$]{
\begin{minipage}[]{0.5\textwidth}
\centering
\includegraphics[height=5cm,width=5.5cm,angle=0]{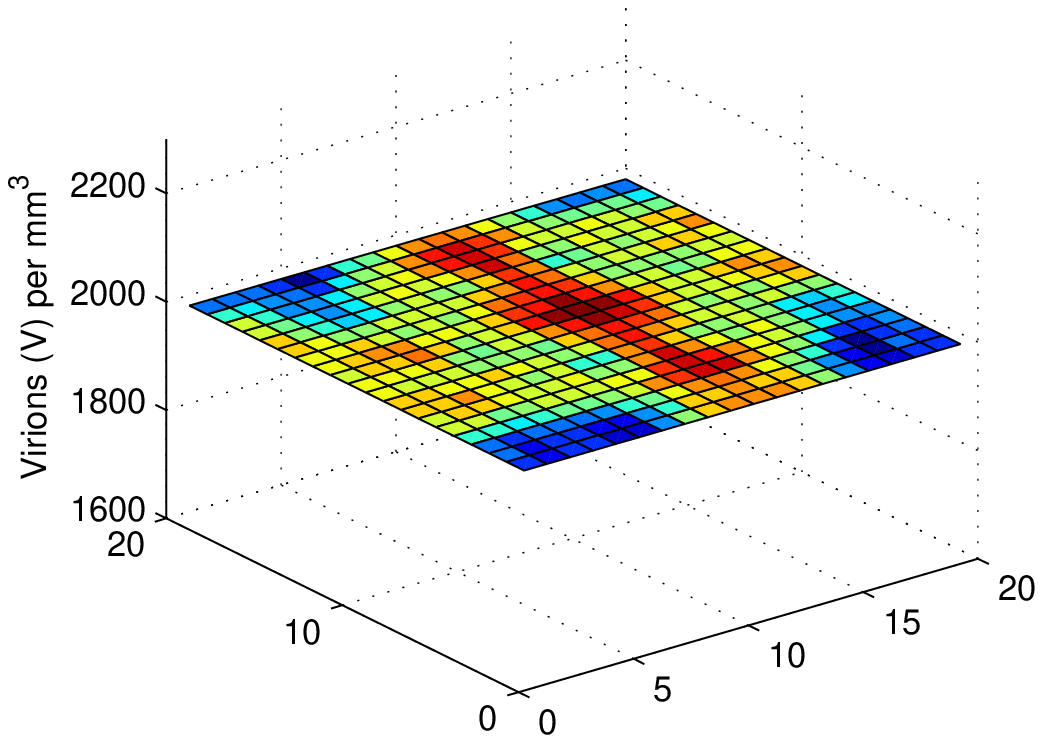}
\end{minipage}} \hspace{-1cm}
\subfloat[Densities of target cells $T$]{
\begin{minipage}[]{0.5\textwidth}
\centering
\includegraphics[height=5cm,width=5.5cm,angle=0]{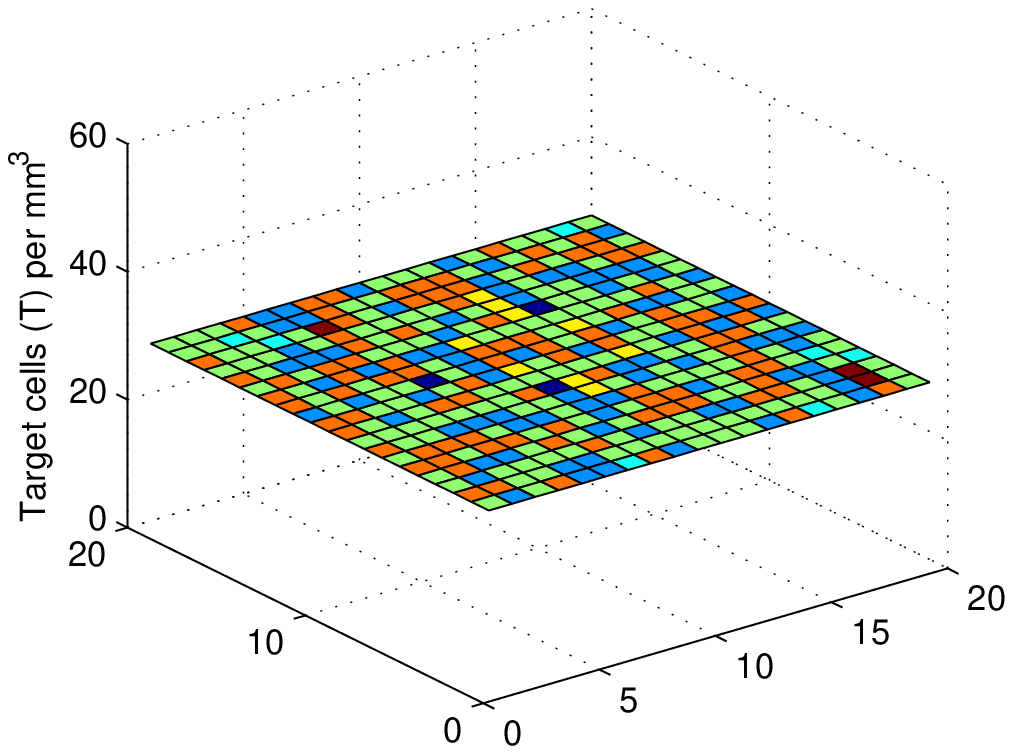}
\end{minipage}}\\
\subfloat[Free virus at (10,10)]{
\begin{minipage}[]{0.5\textwidth}
\centering
\includegraphics[height=4.5cm,width=5cm,angle=0]{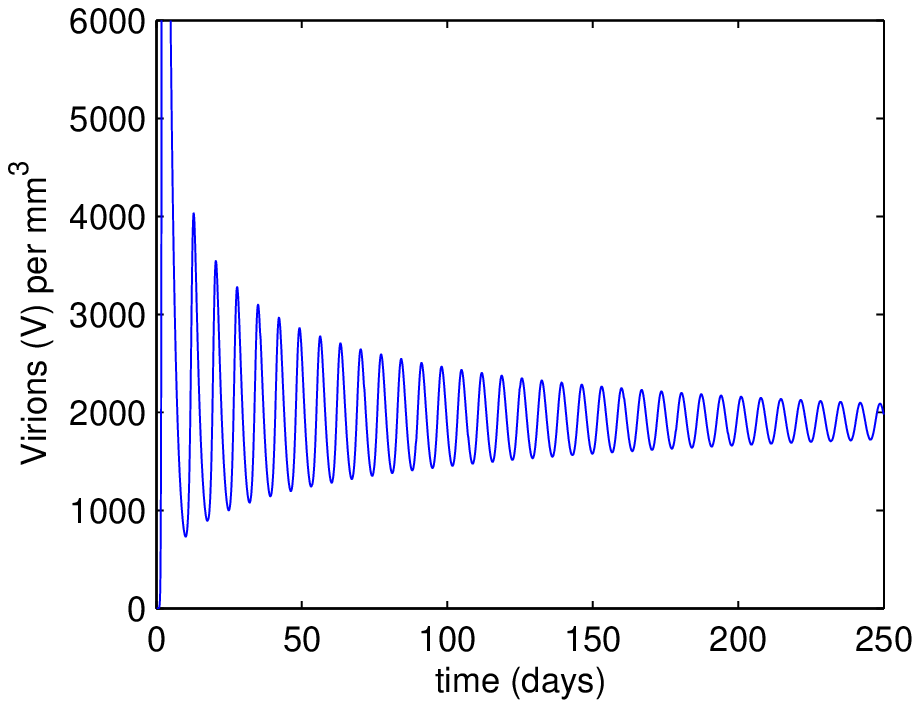}
\end{minipage}} \hspace{-1cm}
\subfloat[Target cells at (10,10)]{
\begin{minipage}[]{0.5\textwidth}
\centering
\includegraphics[height=4.5cm,width=5cm,angle=0]{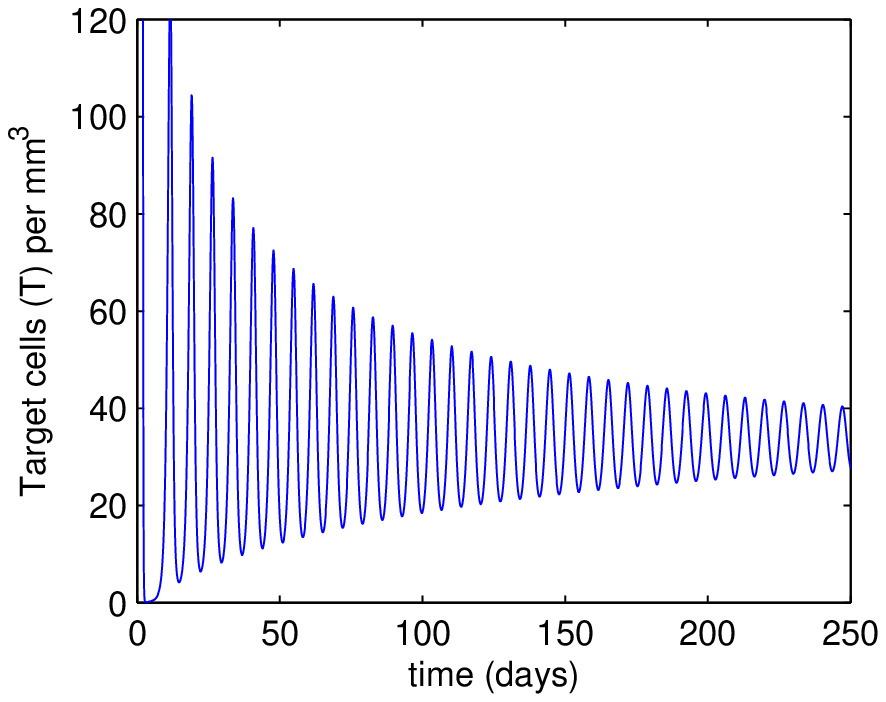}
\end{minipage}}\\\hspace{-1cm}
\subfloat[Free virus at (10,10)]{
\begin{minipage}[]{0.5\textwidth}
\centering
\includegraphics[height=5.5cm,width=6.5cm,angle=0]{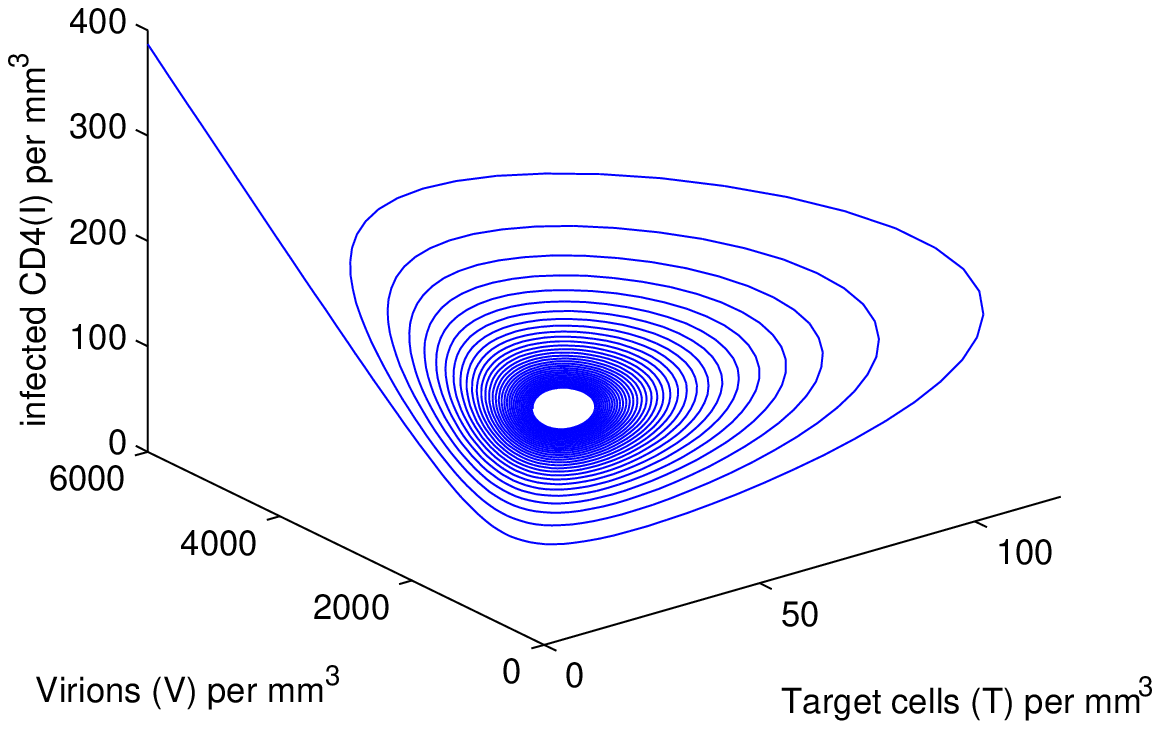}
\end{minipage}} \hspace{-1cm}
\caption{Dynamical solution of System \eqref{pde1}-\eqref{pde3} on a 20$\times$20 grid. Parameter values are $N=300$ and
$r=2.0 <r_1 = 2.1846$. The infected equilibrium is
stable.} \label{N300r2}
\end{figure}

\begin{figure}[ht]\hspace{0cm}
\subfloat[Densities of virus $V$]{
\begin{minipage}[]{0.5\textwidth}
\centering
\includegraphics[height=5cm,width=5.5cm,angle=0]{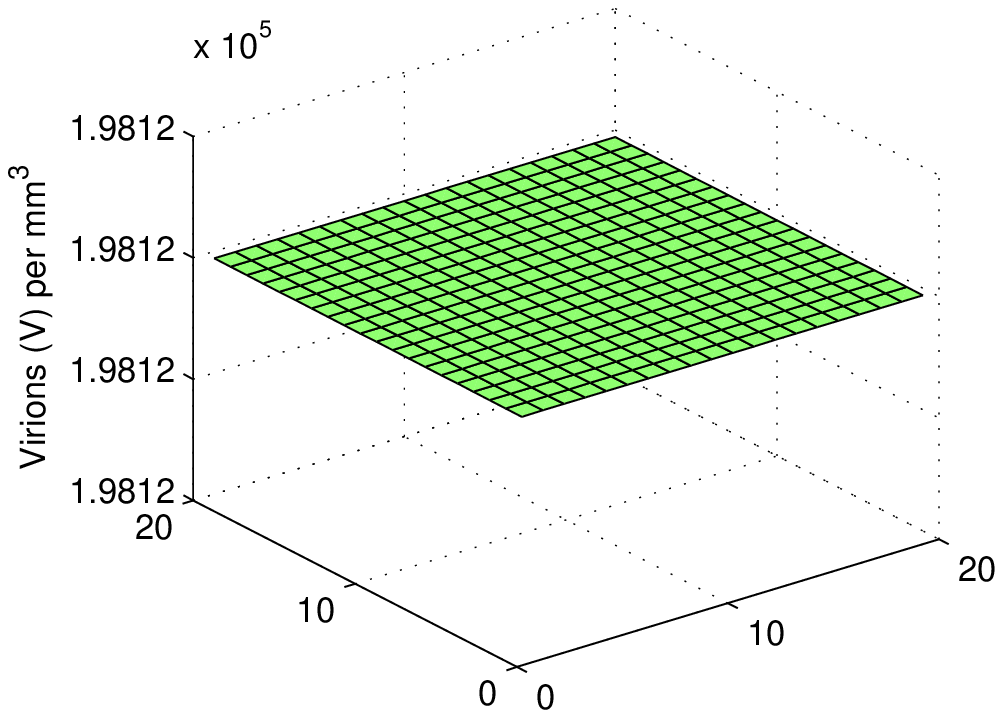}
\end{minipage}} \hspace{-1cm}
\subfloat[Densities of target cells $T$]{
\begin{minipage}[]{0.5\textwidth}
\centering
\includegraphics[height=5cm,width=5.5cm,angle=0]{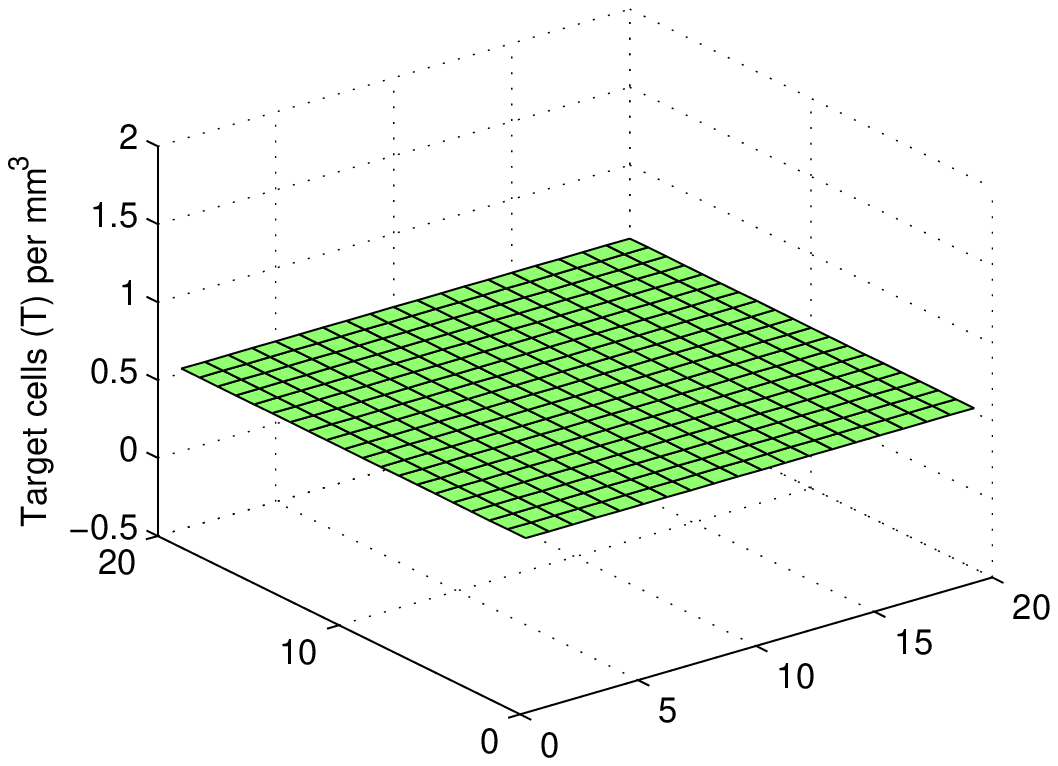}
\end{minipage}}\\
\subfloat[Free virus at (10,10)]{
\begin{minipage}[]{0.5\textwidth}
\centering
\includegraphics[height=4.5cm,width=5cm,angle=0]{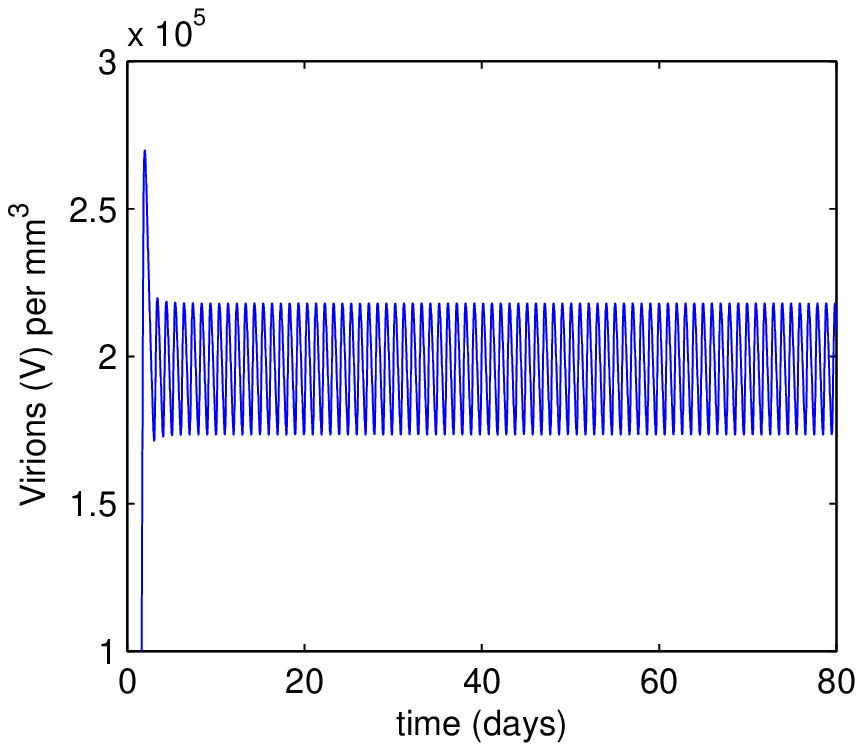}
\end{minipage}} \hspace{-1cm}
\subfloat[Target cells at (10,10)]{
\begin{minipage}[]{0.5\textwidth}
\centering
\includegraphics[height=4.5cm,width=5cm,angle=0]{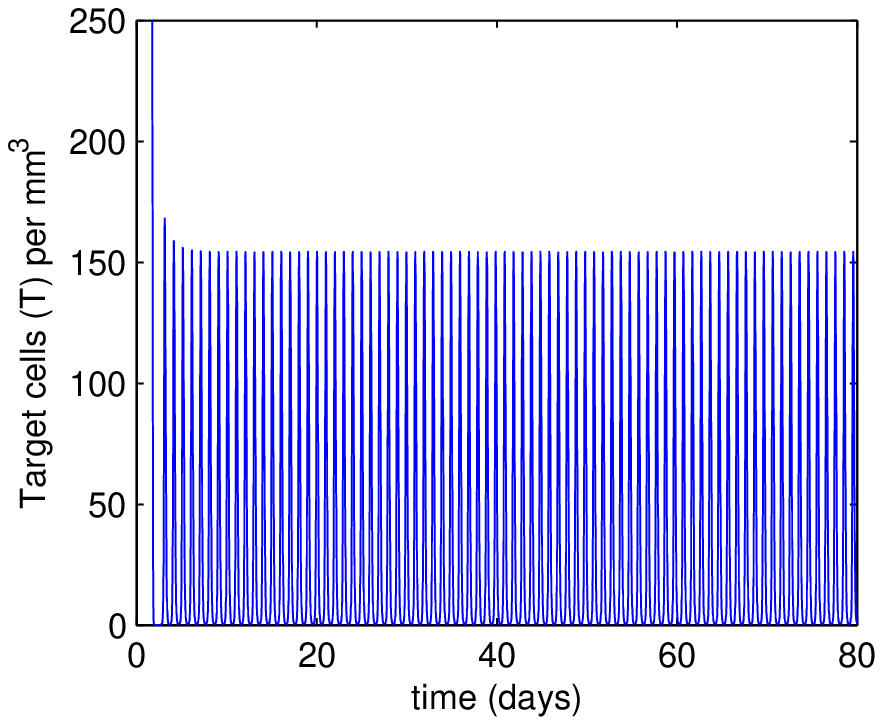}
\end{minipage}}\\ \hspace{-1cm}
\subfloat[Free virus at (10,10)]{
\begin{minipage}[]{0.5\textwidth}
\centering
\includegraphics[height=5.5cm,width=6.5cm,angle=0]{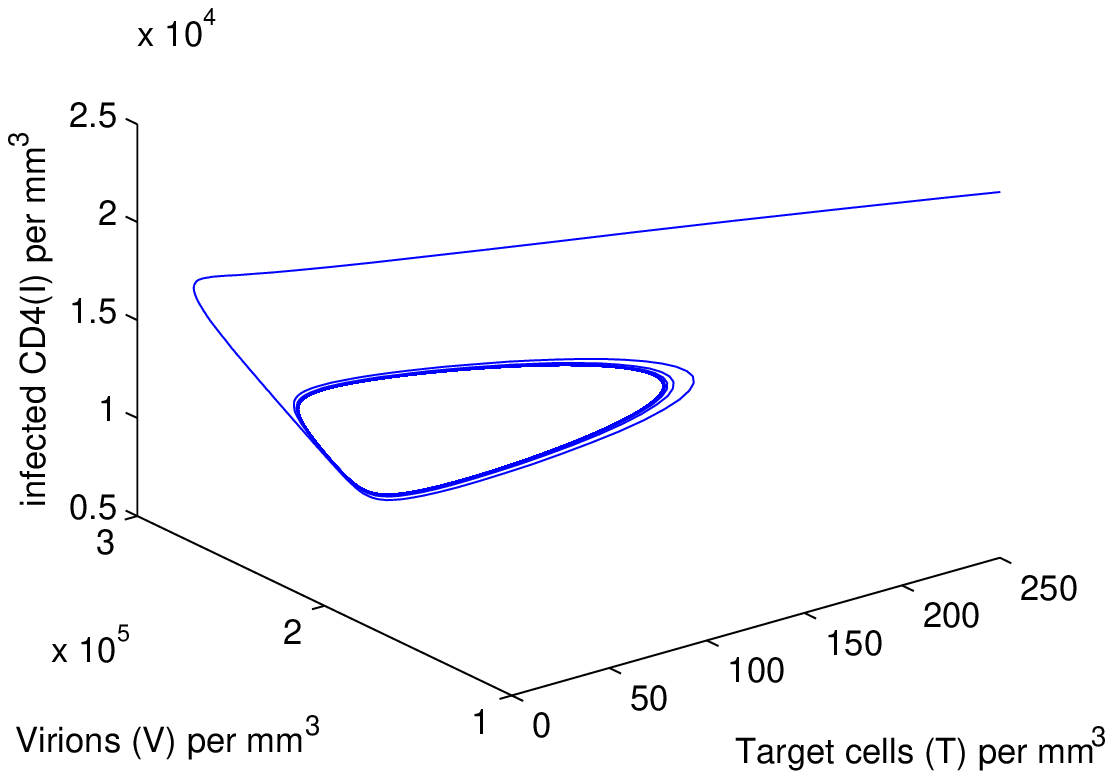}
\end{minipage}} \hspace{-1cm}
\caption{Dynamical solution of System \eqref{pde1}-\eqref{pde3} on a 20$\times$20 grid. Parameter values are $N=300$ and
$r=200.0< r_2 = 464.1225$. The infected equilibrium is
unstable.} \label{N300r200}
\end{figure}

\begin{figure}[ht]\hspace{0cm}
\subfloat[Densities of virus $V$]{
\begin{minipage}[]{0.5\textwidth}
\centering
\includegraphics[height=5cm,width=5.5cm,angle=0]{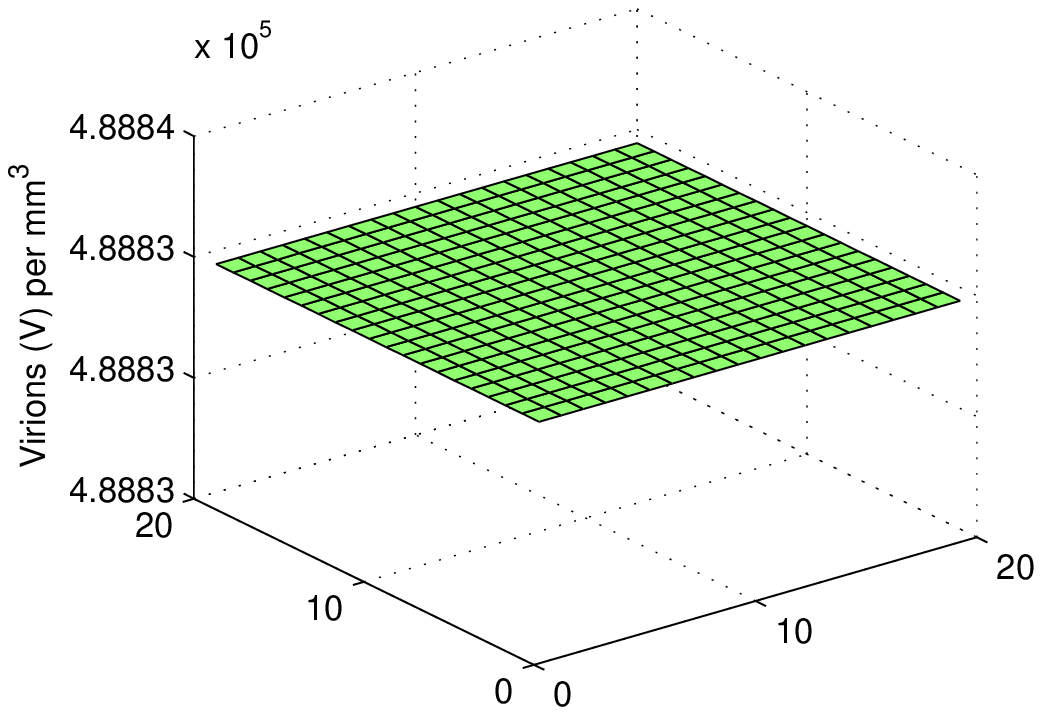}
\end{minipage}} \hspace{-1cm}
\subfloat[Densities of target cells $T$]{
\begin{minipage}[]{0.5\textwidth}
\centering
\includegraphics[height=5cm,width=5.5cm,angle=0]{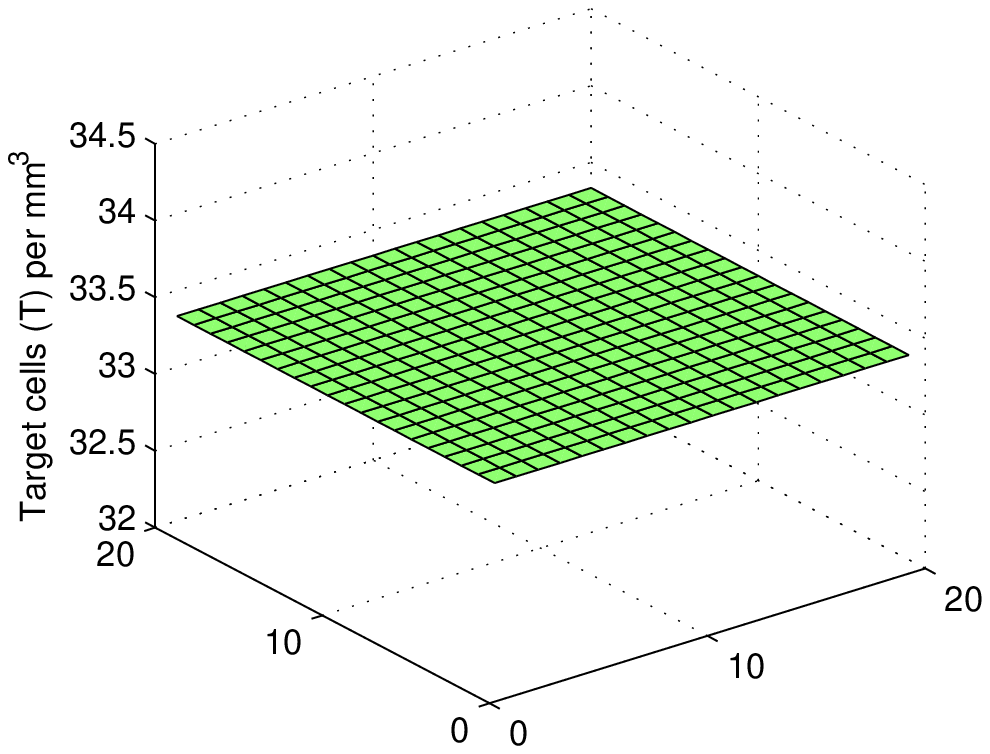}
\end{minipage}}\\
\subfloat[Free virus at (10,10)]{
\begin{minipage}[]{0.5\textwidth}
\centering
\includegraphics[height=4.5cm,width=5cm,angle=0]{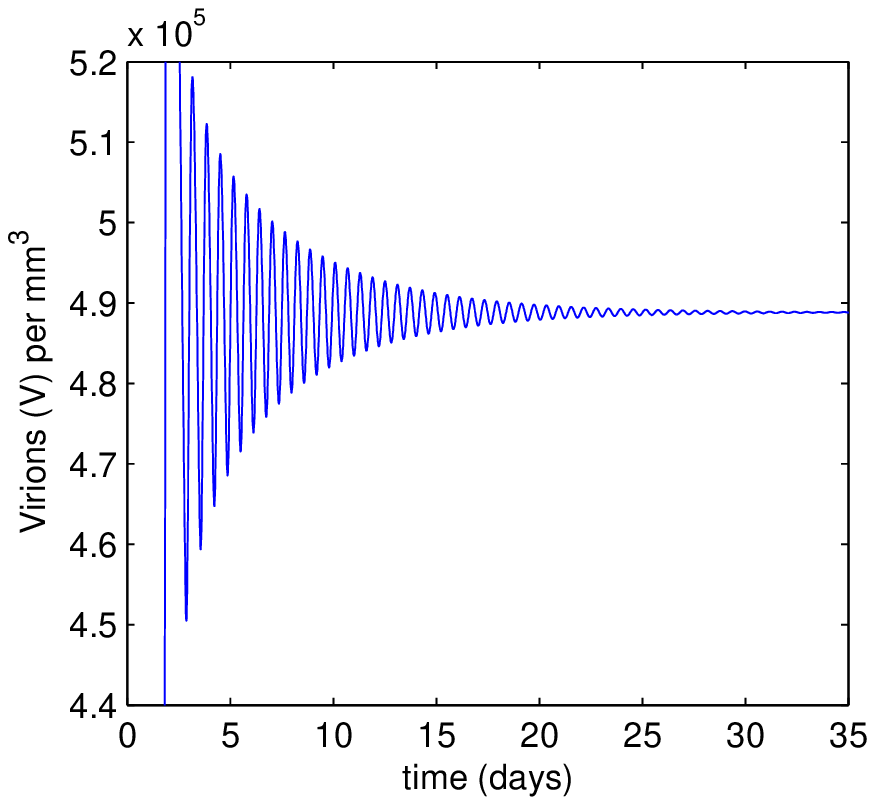}
\end{minipage}} \hspace{-1cm}
\subfloat[Target cells at (10,10)]{
\begin{minipage}[]{0.5\textwidth}
\centering
\includegraphics[height=4.5cm,width=5cm,angle=0]{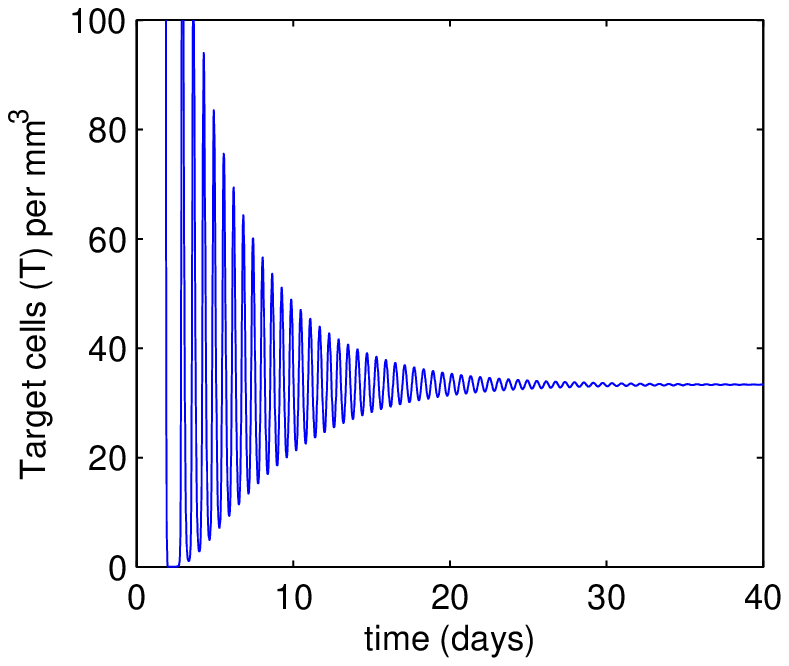}
\end{minipage}}\\\hspace{-1cm}
\subfloat[Free virus at (10,10)]{
\begin{minipage}[]{0.5\textwidth}
\centering
\includegraphics[height=5.5cm,width=6.5cm,angle=0]{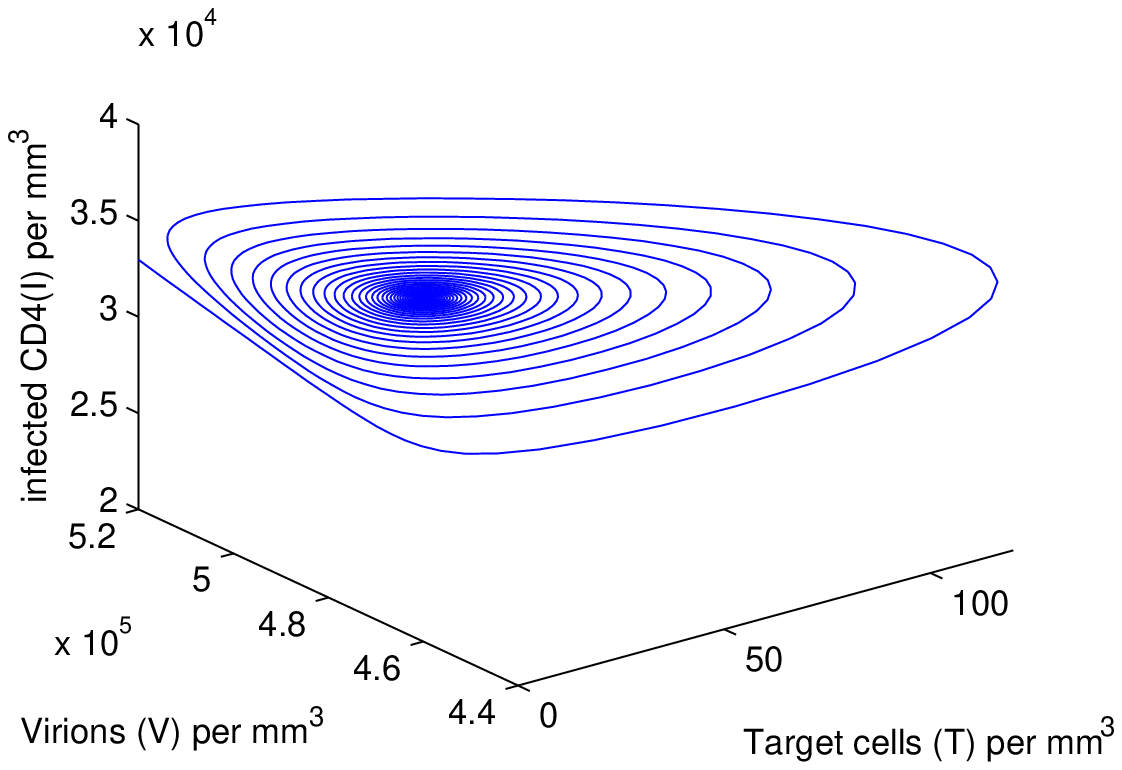}
\end{minipage}} \hspace{-1cm}
\caption{Dynamical solution of System \eqref{pde1}-\eqref{pde3} on a 20$\times$20 grid. Parameter values are $N=300$ and
$r=500.0> r_2 = 464.1225$. The infected equilibrium is
stable.} \label{N300r500}
\end{figure}
\clearpage

\appendix
\section{Eigenvalues of the Laplace operator with periodic boundary conditions}\label{Laplace eigenvalues}
\setcounter{equation}{0}

Let $A$ be the realization of the Laplace operator $\Delta$ in $L^2_{\mathbb C}$, with
$H^2_{\sharp,\mathbb C}$ as a domain. The following is a well-known result.
Nevertheless, for the reader's convenience we provide a short proof.

\begin{theorem}\label{laplace-eigenvalues}
A is a sectorial operator and its spectrum $\sigma(A)$ is a countable set of semisimple
eigenvalues. More precisely,
\begin{equation} \label{sigmaA}
\sigma(A)=\left\{-\frac{4 \pi^2}{\ell^2}(k_1^2+k_2^2):k_1,k_2\in \mathbb{N}\right\}.
\end{equation}
\end{theorem}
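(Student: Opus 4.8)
The plan is to diagonalize $A$ by means of Fourier series and then to read off every assertion from elementary properties of multiplication operators. For $\mathbf{k}=(k_1,k_2)\in\Z^2$ put
\[
e_{\mathbf k}(x,y)=\frac1\ell\,\exp\!\Big(\frac{2\pi i}{\ell}(k_1x+k_2y)\Big),\qquad (x,y)\in\Omega_\ell,
\]
and $\mu_{\mathbf k}=-\dfrac{4\pi^2}{\ell^2}(k_1^2+k_2^2)$. Each $e_{\mathbf k}$ belongs to $C^\infty_{\sharp}$, hence to $H^2_{\sharp,\C}$, the family $\{e_{\mathbf k}\}_{\mathbf k\in\Z^2}$ is an orthonormal basis of $L^2_{\C}$, and $\Delta e_{\mathbf k}=\mu_{\mathbf k}e_{\mathbf k}$. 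The only point that is not purely formal is the description of the domain: I would show that $u=\sum_{\mathbf k}c_{\mathbf k}e_{\mathbf k}\in L^2_{\C}$ belongs to $H^2_{\sharp,\C}$ if and only if $\sum_{\mathbf k}(1+|\mathbf k|^2)^2|c_{\mathbf k}|^2<\infty$, in which case $Au=\sum_{\mathbf k}\mu_{\mathbf k}c_{\mathbf k}e_{\mathbf k}$. For this one checks that the trigonometric polynomials are dense in $C^2_{\sharp,\C}$ for the $H^2$-norm — for instance via periodic mollification or Fej\'er means, which commute with differentiation and preserve periodicity — hence dense in $H^2_{\sharp,\C}$, and that on a trigonometric polynomial the $H^2$-norm is equivalent, by integration by parts and Parseval, to $\big(\sum_{\mathbf k}(1+|\mathbf k|^2)^2|c_{\mathbf k}|^2\big)^{1/2}$; passing to the $H^2$-closure gives the stated characterization. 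Equivalently, through the unitary map $U\colon L^2_{\C}\to\ell^2(\Z^2)$, $Uu=(c_{\mathbf k})_{\mathbf k}$, the operator $A$ is unitarily equivalent to multiplication by the real sequence $(\mu_{\mathbf k})_{\mathbf k}$ on its maximal domain, and in particular $A$ is self-adjoint and closed.

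Granting this, the spectral statements are immediate. The spectrum of a multiplication operator is the closure of the set of values of the multiplier, here
\[
\{\mu_{\mathbf k}:\mathbf k\in\Z^2\}=\Big\{-\tfrac{4\pi^2}{\ell^2}(k_1^2+k_2^2):k_1,k_2\in\N\Big\};
\]
since this is a discrete subset of $(-\infty,0]$ whose only accumulation point is $-\infty$, it is already closed, which proves \eqref{sigmaA}. Every $\mu=\mu_{\mathbf k}$ is an eigenvalue whose eigenspace is spanned by the $e_{\mathbf k'}$ with $|\mathbf k'|^2=|\mathbf k|^2$, hence is finite dimensional (its dimension equals the number of representations of $|\mathbf k|^2$ as a sum of two squares), and $\mu_{(0,0)}=0$ is simple. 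Finally, since the resolvent acts diagonally, $R(\lambda,A)f=\sum_{\mathbf k}(\lambda-\mu_{\mathbf k})^{-1}\Big(\int_{\Omega_\ell}f\,\overline{e_{\mathbf k}}\,dxdy\Big)e_{\mathbf k}$, near each $\mu_{\mathbf k}$ it has only a simple pole, whose residue is the orthogonal projection onto the corresponding eigenspace; therefore every eigenvalue is semisimple.

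It remains to verify that $A$ is sectorial, which once more follows from the diagonal representation: being self-adjoint with $\sigma(A)\subset(-\infty,0]$, one has $\|R(\lambda,A)\|_{\mathcal L(L^2_{\C})}=\sup_{\mathbf k}|\lambda-\mu_{\mathbf k}|^{-1}\le\mathrm{dist}(\lambda,(-\infty,0])^{-1}$ for every $\lambda\in\rho(A)$. An elementary geometric estimate then yields, for each $\theta\in(\tfrac\pi2,\pi)$, a constant $M_\theta$ such that $\|R(\lambda,A)\|\le M_\theta/|\lambda-1|$ on the sector $\{\lambda\neq1:|\arg(\lambda-1)|<\theta\}$, which is exactly the defining estimate of a sectorial operator; hence $A$ generates an analytic semigroup. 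I expect the only genuine work to lie in the first paragraph — reconciling the ``closure in $H^2$'' definition of $H^2_{\sharp,\C}$ with the weighted-$\ell^2$ description of the Fourier coefficients — after which each claim reduces to a standard fact about multiplication operators.
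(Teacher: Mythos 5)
Your proposal is correct and follows essentially the same route as the paper: diagonalization of $A$ in the Fourier basis $e_{k_1}(x)e_{k_2}(y)$, identification of the spectrum with the set of symbols $-\frac{4\pi^2}{\ell^2}(k_1^2+k_2^2)$, and semisimplicity read off from the simple pole of the diagonal resolvent whose residue is the orthogonal spectral projection. The only cosmetic difference is that you obtain sectoriality from self-adjointness and the estimate $\|R(\lambda,A)\|\le \mathrm{dist}(\lambda,(-\infty,0])^{-1}$, whereas the paper estimates $\|R(\lambda,A)\|\le |\lambda|^{-1}$ on the right half-plane and invokes \cite[Prop. 2.3.1]{lunardi1995analytic}; both are standard and equivalent here.
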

\begin{proof}
Fix $\lambda\in\mathbb C$, $f\in L^2_{\mathbb C}$ and consider the resolvent equation
\begin{align}\label{resolventlaplace}
\lambda u- A u=f.
\end{align}
Denote by $e_h$ the function defined by
$$
e_h(t)=\exp\left (\frac{2h\pi t}{\ell}i\right ),\qquad\;\,t\in\R,
$$
for any $h\in\mathbb Z$. Then, the functions $(x,y)\mapsto e_{h}(x)e_k(y)$ are an orthogonal
basis of $L^2_{\mathbb C}$.
Hence, any function $g\in L^2_{\mathbb C}$ can be expanded into a Fourier series as follows:
\begin{align*}
g(x,y)=\sum_{k_1,k_2\in\mathbb{N}}\left (\frac{1}{\ell^2}
\int_{\Omega_{\ell}}u(x,y)e_{-k_1}(x)e_{-k_2}(y)dx dy\right ) e_{k_1}(x)e_{k_2}(y)=:
\sum_{k_1,k_2\in\mathbb{N}}u_{k_1,k_2}e_{k_1}(x)e_{k_2}(y),
\end{align*}
for almost every $(x,y)\in\Omega_{\ell}$. Multiplying both sides of \eqref{resolventlaplace} by $e_{k_1}(x)e_{k_2}(y)$ and integrating over $\Omega_{\ell}$,
it thus follows that, if $u\in H^2_{\sharp,\mathbb C}$, then the Fourier coefficients of $u$
solves the infinitely many equations
\begin{align*}
\lambda \ell^2 u_{k_1,k_2}+4\pi^2(k_1^2+k_2^2)u_{k_1,k_2}=\ell^2 f_{k_1,k_2}.
\end{align*}
These equations are uniquely solvable if and only if $\lambda\neq -\frac{4\pi^2}{\ell^2}(k_1^2+k_2^2)$
and, in this case, we have
\begin{align*}
u_{k_1,k_2}=\frac{f_{k_1,k_2}}{\lambda+\frac{4\pi^2}{\ell^2}(k_1^2+k_2^2)}.
\end{align*}
A straightforward computation shows that the function
\begin{align*}
u(x,y)=\sum_{k_1,k_2\in\mathbb{N}}\frac{f_{k_1,k_2}}{\lambda+\frac{4\pi^2}{\ell^2}(k_1^2+k_2^2)}e_{k_1}(x)e_{k_2}(y),
\quad (x,y)\in\Omega_{\ell}
\end{align*}
is in $D(A)$ and, actually, solves the resolvent equation, when
$\lambda\neq -\frac{4\pi^2}{\ell^2}(k_1^2+k_2^2)$ for any $k_1,k_2\in\mathbb{N}$.
We have so proved that $\sigma(A)$ is given by \eqref{sigmaA}.

It is immediate to check that $\sigma(A)$ consists of eigenvalues only. Moreover,
if ${\rm Re}\,\lambda > 0$, we can estimate
\begin{align*}
\|R(\lambda,A)f\|^2_{L^2(\Omega_{\ell})}&=\sum_{k_1,k_2\in\mathbb{N}}
\frac{|f_{k_1,k_2}|^2}{\left|\lambda+\frac{4\pi^2}{\ell^2}(k_1^2+k_2^2) \right|^2}\leq
 \frac{1}{|\lambda|^2}\sum_{k_1,k_2\in\mathbb{N}}|f_{k_1,k_2}|^2
=\frac{1}{|\lambda|^2}\|f\|^2_{L^2(\Omega_{\ell})}.
\end{align*}
Proposition 2.3.1 in \cite{lunardi1995analytic} implies that $A$ is sectorial in $L^2(\Omega_{\ell})$.

Finally, we show that all the eigenvalues of $A$ are semisimple. For this purpose, let us fix one
of such eigenvalues $\lambda_0$ and let
$H=\{(k_1,k_2)\in \mathbb{N}^2: \lambda_0=-\frac{4\pi^2}{\ell^2}(k_1^2+k_2^2)\}$. Then,
\begin{align*}
(\lambda-\lambda_0)R(\lambda,A)f(x,y)&=\sum_{k_1,k_2\in H}f_{k_1,k_2}e_{k_1}(x)e_{k_2}(y)+\sum_{k_1,k_2\notin H}\frac{\lambda-\lambda_0}{\lambda+\frac{4\pi^2}{\ell^2}(k_1^2+k_2^2)}f_{k_1,k_2}e_{k_1}(x)e_{k_2}(y)\\
&=:Pf(x,y)+(\lambda-\lambda_0)D_{\lambda}f(x,y).
\end{align*}
Clearly, $P$ is the spectral projection on the eigenspace corresponding to
the eigenvalue $\lambda_0$. On the other hand,
$D_{\lambda}$ is a bounded operator in $L^2_{\mathbb C}$ uniformly with
respect to $\lambda\in B(\lambda_0, 2\pi^2/\ell^2)$. Indeed, if $(k_1,k_2)\notin H$ and $\lambda$
is as above, then
\begin{align*}
\left|\frac{4\pi^2}{\ell^2}(k_1^2+k_2^2)+\lambda\right|\geq
\left|\frac{4\pi^2}{\ell^2}(k_1^2+k_2^2)+\lambda_0\right|-\left|\lambda-\lambda_0\right|
\ge \frac{4\pi^2}{\ell^2}-|\lambda-\lambda_0|
\geq \frac{2\pi^2}{\ell^2}.
\end{align*}
Thus,
\begin{align*}
\|D_{\lambda}f\|_{L^2(\Om_{\ell})}^2
\leq \frac{\ell^4}{4\pi^4}\sum_{(k_1,k_2)\notin H}|f_{k_1,k_2}|^2\leq \frac{\ell^4}{4\pi^4}\|f\|^2_{L^2(\Omega_{\ell})},
\end{align*}
i.e., $D_{\lambda}$ is bounded, uniformly with respect to $\lambda\in B(\lambda_0, 2\pi^2/\ell^2)$.
These results imply that $\lambda_0$ is a semisimple eigenvalue of $A$. Note that the eigenspace
corresponding to $\lambda_0$ is one-dimensional if and only if $H$ is a singleton. In this case,
$\lambda_0$ is a simple eigenvalue of $A$. More precisely, the geometric multiplicity of the eigenvalue
$\lambda=\frac{4\pi^2}{\ell^2}(k_1^2+k_2^2)$ is given by $m_{\lambda}=
4\prod_{i=1}^m(r_i+1)$, where the coefficients $r_i$ are given by the following decomposition of $k_1^2+k_2^2$ in primes
\begin{equation*}
k_1^2+k_2^2=2^\alpha \prod_{i=1}^m p_i^{r_i} \prod_{j=1}^n q_j^{s_j},
\end{equation*}
with $p_i$ being primes of the form $4t+1$, and $q_j$ being primes of the form $4t+3$ (see \cite{hardy2005introduction}).
\end{proof}

The following classical result on Sturm-Liouville problems is the key tool to prove Theorem \ref{thm-2.1}(iv).
\begin{corollary}
\label{cor-app}
Let $d$ and $\mu$ be,
respectively, a positive constant and a bounded measurable function.
Further, let $B:H^2_{\sharp}\to L^2$ be the operator
defined by $Bu=d\Delta u-\mu u$ for any $u\in
H^2_{\sharp}$. Then, the spectrum of $B$ consists of
eigenvalues only. Moreover, its maximum eigenvalue $\lambda_{\max}$
is given by the following formula:
\begin{equation}
\label{variat} \lambda_{\max} = -\inf_{\psi \in H^1_{\sharp}, \psi
\not\equiv 0} \left\{\frac{d\int_{\Omega_{\ell}}|\nabla\psi|^2 dx +
\int_{\Omega_{\ell}}\mu\psi^2 dx} {\int_{\Omega_{\ell}}\psi^2 dx}
\right\}.
\end{equation}
Finally, the eigenspace corresponding to the eigenvalue
$\lambda_{\max}$ is one dimensional and contains functions which do
not change sign in $\overline{\Omega_{\ell}}$.
\end{corollary}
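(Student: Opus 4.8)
The plan is to exploit the self-adjointness of $B$ together with classical min--max and maximum-principle arguments. First I would note that $B$ is a bounded perturbation (multiplication by the real, bounded function $-\mu$) of $d\Delta$, which by Theorem \ref{laplace-eigenvalues} is sectorial; since $H^2_{\sharp}$ embeds compactly into $L^2$, the resolvent of $d\Delta$, hence that of $B$, is compact, so $\sigma(B)$ consists of isolated eigenvalues of finite multiplicity. Moreover $B$ is self-adjoint in $L^2$ (as $d\Delta$ is self-adjoint with domain $H^2_{\sharp}$ and multiplication by $\mu$ is bounded and self-adjoint), so $\sigma(B)\subset\R$; and the associated quadratic form $Q(\psi)=-d\int_{\Omega_{\ell}}|\nabla\psi|^2\,dxdy-\int_{\Omega_{\ell}}\mu\psi^2\,dxdy$, with form domain $H^1_{\sharp}$, satisfies $Q(\psi)\le\|\mu\|_{\infty}\|\psi\|_{L^2}^2$. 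Hence $B$ is bounded above and a largest eigenvalue $\lambda_{\max}$ exists.

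Next I would derive \eqref{variat}. Since $B$ is self-adjoint, bounded above, with form domain $H^1_{\sharp}$, the spectral theorem (Courant--Fischer/min--max) yields $\lambda_{\max}=\sup\{Q(\psi)/\|\psi\|_{L^2}^2:\psi\in H^1_{\sharp},\ \psi\not\equiv0\}$, which is exactly \eqref{variat}; the supremum is attained precisely on the eigenfunctions relative to $\lambda_{\max}$, and every maximizer lies in $D(B)$ and is such an eigenfunction. For the sign statement, if $\psi$ is a maximizer then so is $|\psi|$, because $|\nabla|\psi||=|\nabla\psi|$ a.e.\ and $\psi^2=|\psi|^2$; hence $|\psi|$ is a non-negative eigenfunction for $\lambda_{\max}$. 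By elliptic regularity $|\psi|\in W^{2,p}_{\sharp}$ for every $p<\infty$, so it is continuous on $\overline{\Omega_{\ell}}$, and, choosing a constant $c$ with $c\ge\lambda_{\max}+\mu$ a.e., the non-negative function $|\psi|$ satisfies $d\Delta|\psi|-c|\psi|\le0$ with $c\ge0$; the strong maximum principle on $\Omega_{\ell}$ (periodic boundary conditions, equivalently on the torus) then forces either $|\psi|\equiv0$ or $|\psi|>0$ everywhere. Since $\psi\not\equiv0$, we get $|\psi|>0$, i.e.\ every eigenfunction for $\lambda_{\max}$ keeps a constant sign on $\overline{\Omega_{\ell}}$. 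Finally, if $\psi_1,\psi_2$ were linearly independent eigenfunctions for $\lambda_{\max}$, both could be taken strictly positive by the previous step, so for a suitable $t\in\R$ the eigenfunction $\psi_1-t\psi_2$ would be $\ge0$ and vanish at some point, whence $\psi_1\equiv t\psi_2$ by the strong maximum principle --- a contradiction; therefore the eigenspace is one-dimensional.

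The step I expect to be the main obstacle is the rigorous use of the strong maximum principle (and the implicit unique continuation/irreducibility) when $\mu$ is only bounded and measurable: one must invoke the Hopf-type maximum principle, or equivalently the weak Harnack inequality, valid for uniformly elliptic operators with $L^{\infty}$ coefficients, or else deduce the positivity and irreducibility of the analytic semigroup $e^{tB}$ from those of the periodic heat semigroup and conclude via a Krein--Rutman-type argument. Once this is in place, the remaining arguments are routine.
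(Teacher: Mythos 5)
Your proof is correct. Note, however, that the paper itself gives no proof of this corollary: it is introduced with the sentence ``The following classical result on Sturm--Liouville problems is the key tool\dots'' and left unproved, so there is no in-paper argument to compare against. Your write-up supplies exactly the standard argument one would expect behind such a citation: compactness of the embedding $H^2_{\sharp}\hookrightarrow L^2$ gives a compact resolvent and hence a purely discrete spectrum; self-adjointness plus the upper bound on the quadratic form gives the Rayleigh-quotient characterization \eqref{variat}; the trick $\psi\mapsto|\psi|$ combined with the strong maximum principle gives positivity; and the ``shift by $t$'' argument gives simplicity. All the steps are sound. One small remark on the step you flag as the main obstacle: after you rewrite the eigenvalue equation as $d\Delta|\psi|-c|\psi|\le 0$ with a constant $c\ge(\lambda_{\max}+\mu)^{+}$, the differential operator appearing in the inequality has \emph{constant} coefficients and $|\psi|\in W^{2,p}$ for all $p<\infty$, so the classical strong maximum principle for strong solutions (e.g.\ Gilbarg--Trudinger, Thm.\ 9.6) applies directly on the torus; the full Krylov--Safonov/weak-Harnack machinery for merely measurable coefficients is not actually needed, since the measurable function $\mu$ has been absorbed into the right-hand side via the sign of $|\psi|$. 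With that observation your argument is complete and self-contained.
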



\end{document}